\documentclass[a4paper,twoside,10pt]{amsart}
\usepackage{amsmath,amsfonts,amssymb,amsthm}
\usepackage{mathtools,mathrsfs}
\usepackage{color}
\usepackage[a4paper]{geometry}
\usepackage[colorlinks=true,pdfstartview={XYZ null null 1.00},pdfview=FitH,citecolor=blue,urlcolor=cyan]{hyperref}
\usepackage{caption,enumerate}
\usepackage{bm}
\newtheorem{theorem}{Theorem}[section]
\newtheorem{lemma}[theorem]{Lemma}
\numberwithin{equation}{section}
\def\SzQ{\mathrm{Q}}
\def\DQ{\mathsf{Q}}
\def\im{\mathrm{i}}
\def\d{\mathrm{d}}
\def\id{\,\mathrm{d}}
\def\e{\mathrm{e}}

\def\Re{\operatorname{Re}}

\def\Im{\operatorname{Im}}
\newcommand\mytop[2]{\genfrac{}{}{0pt}{}{#1}{#2}}

\allowdisplaybreaks

\author[G. Nemes]{Gerg\H{o} Nemes}
\address{School of Mathematics, Harbin Institute of Technology, Xidazhi Street, Harbin 150001,\newline Heilongjiang, People's Republic of China\medskip}

\email{g.nemes@hit.edu.cn}

\keywords{asymptotic expansions, error bounds, Jacobi functions, hypergeometric function}
\subjclass[2020]{41A60, 33C05, 33C45}

\begin{document}

\title[Large-degree asymptotics for Jacobi and related functions]{Large-degree asymptotic expansions for\\ the Jacobi and related functions}
\dedicatory{Dedicated to Roderick S. C. Wong on the occasion of his $80^{\text{th}}$ birthday}

\begin{abstract} Simple asymptotic expansions for the Jacobi functions $P_\nu^{(\alpha, \beta)}(z)$ and $Q_\nu^{(\alpha, \beta)}(z)$ for large degree $\nu$, with fixed parameters $\alpha$ and $\beta$, are surprisingly rare in the literature, with only a few special cases covered. This paper addresses this notable gap by deriving simple (inverse) factorial expansions for these functions, complemented by explicit and computable error bounds. Additionally, we provide analogous results for the associated functions $\SzQ_\nu^{(\alpha, \beta)}(x)$ and $\DQ_\nu^{(\alpha, \beta)}(x)$.
\end{abstract}

\maketitle

\section{Introduction and main result}

In this paper, we derive asymptotic expansions for the Jacobi functions $P_\nu^{(\alpha ,\beta )} (z)$ and $Q_\nu^{(\alpha ,\beta )} (z)$, as well as for the associated functions $\SzQ_\nu^{(\alpha ,\beta )} (x)$ and $\DQ_\nu^{(\alpha ,\beta )}(x)$, in the regime where the degree $\nu$ becomes large while $\alpha$ and $\beta$ are held fixed. These functions are solutions to the Jacobi differential equation
\begin{equation}\label{JacobiEq}
(1 - z^2 )\frac{\d^2 w(z)}{\d z^2 } + (\beta  - \alpha  - z(\alpha  + \beta  + 2))\frac{\d w(z)}{\d z} + \nu (\nu  + \alpha  + \beta  + 1)w(z) = 0,
\end{equation}
which is characterised by regular singularities at $z = \pm 1$ and $z = \infty$.

In recent decades, significant effort has been dedicated to deriving asymptotic expansions for these functions. Generally, there are two types of expansions: those in terms of elementary functions and uniform expansions involving Bessel functions (a brief summary is provided later in this section). A common feature of these results is the complexity of their coefficients. In contrast, the expansions we present in this paper are simple (inverse) factorial expansions with elementary coefficients. Additionally, in Section \ref{bounds}, we provide explicit and computable error bounds for these new expansions under certain conditions on the degree $\nu$ and the parameters $\alpha$ and $\beta$. It is important to note that, unlike their uniform counterparts, our expansions break down near the transition points $z = \pm 1$ of \eqref{JacobiEq}. Surprisingly, these simple expansions have not been extensively documented in the literature, except for a few special cases. This omission may be because previous researchers primarily focused on deriving uniform asymptotic results, which naturally possess a more complicated structure. We note that (inverse) factorial series were recently obtained for the associated Legendre and Ferrers functions in \cite{Nemes2020}.

To begin, we define the relevant functions. The Jacobi functions of the first and second kind, with complex degree $\nu$ and parameters $\alpha$ and $\beta$, are defined in terms of the (regularised) hypergeometric function as follows (cf. \cite{Cohl2023}, \cite[Eq. (10.8.18)]{Erdelyi1981}):
\begin{equation}\label{Pdef}
P_\nu ^{(\alpha ,\beta )} (z) = \frac{\Gamma (\nu  + \alpha  + 1)}{\Gamma (\nu  + 1)}\mathbf{F}\!\left( \mytop{- \nu ,\nu  + \alpha  + \beta  + 1}{\alpha  + 1};\frac{1 - z}{2} \right)
\end{equation}
for $z\in \mathbb{C}\setminus \left(-\infty,-1\right]$ and $\nu+\alpha \notin \mathbb{Z}^{-}$, and
\begin{equation}\label{Qdef}
Q_\nu ^{(\alpha ,\beta )} (z) = \frac{2^{\nu  + \alpha  + \beta } \Gamma (\nu  + \alpha  + 1)\Gamma (\nu  + \beta  + 1)}{\left(z - 1\right)^{\nu  + \alpha  + 1} \left(z + 1\right)^\beta  }\mathbf{F}\!\left( \mytop{\nu  + 1,\nu  + \alpha  + 1}{2\nu  + \alpha  + \beta  + 2};\frac{2}{1 - z} \right)
\end{equation}
for $z\in \mathbb{C}\setminus \left(-\infty,1\right]$ and $\nu+\alpha, \nu+\beta \notin \mathbb{Z}^{-}$. For analytic continuation to other Riemann sheets, see \cite{Dunster1999}. In the case where $\nu = n$ is a non-negative integer, $P_n^{(\alpha, \beta)}(z)$ corresponds to the classical Jacobi polynomials. We will later use the following relation between the two types of Jacobi functions:
\begin{equation}\label{PQrel}
P_\nu ^{(\alpha ,\beta )} (x) = \lim_{\varepsilon \to 0^+} \tfrac{\im}{\pi}\left( \e^{\pi \alpha \im} Q_\nu ^{(\alpha ,\beta )} (x + \im\varepsilon ) - \e^{ - \pi \alpha \im} Q_\nu ^{(\alpha ,\beta )} (x - \im\varepsilon ) \right),
\end{equation}
which holds for $-1 < x < 1$ and $\nu + \alpha, \nu + \beta \notin \mathbb{Z}^{-}$ \cite[Eq. (2.3)]{Durand1978}. The literature offers (at least) two distinct definitions for extending $Q_\nu ^{(\alpha ,\beta )} (x)$ into the interval $-1<x<1$. Szeg\H{o} defines the associated function $\SzQ_\nu^{(\alpha ,\beta )}(x)$, with complex degree $\nu$ and parameters $\alpha$ and $\beta$, as follows (cf. \cite[Eq. (10.8.22)]{Erdelyi1981} and \cite[Eq. (4.62.9)]{Szego1939}):
\begin{equation}\label{Qfuncdef}
\SzQ_\nu ^{(\alpha ,\beta )} (x) = \lim_{\varepsilon\to 0^+} \tfrac{1}{2}\left( Q_\nu ^{(\alpha ,\beta)} (x + \im\varepsilon ) + Q_\nu ^{(\alpha ,\beta )} (x - \im\varepsilon ) \right),
\end{equation}
for $-1<x<1$ and $\nu+\alpha, \nu+\beta \notin \mathbb{Z}^{-}$. Alternatively, Durand \cite[Eq. (2.4)]{Durand1978} defines the associated function $\DQ_\nu^{(\alpha ,\beta )}(x)$ as
\begin{equation}\label{Qfuncdef2}
\DQ_\nu ^{(\alpha ,\beta )} (x) = \lim_{\varepsilon \to 0^+} \tfrac{1}{2}\left( \e^{\pi \alpha \im} Q_\nu ^{(\alpha ,\beta )} (x + \im\varepsilon ) + \e^{ - \pi \alpha \im} Q_\nu ^{(\alpha ,\beta )} (x - \im\varepsilon ) \right),
\end{equation}
for $-1<x<1$ and $\nu+\alpha, \nu+\beta \notin \mathbb{Z}^{-}$. Both functions are real when $\nu$, $\alpha$ and $\beta$ are real. Relevant applications and important properties of these functions are discussed in, for example, \cite{Wimp1997} and \cite{Cohl2025}.

The associated Legendre functions $P_\nu^\mu (z)$ and $Q_\nu^\mu (z)$ can be expressed in terms of Jacobi functions as
\[
P_\nu^\mu (z) = \frac{\Gamma(\nu + 1)}{\Gamma(\nu - \mu + 1)} \left( \frac{z + 1}{z - 1} \right)^{\mu / 2} P_\nu^{(-\mu, \mu)}(z),
\]
and
\[
Q_\nu^\mu (z) = \e^{\pi \im \mu} \frac{\Gamma(\nu + 1)}{\Gamma(\nu - \mu + 1)} \left( \frac{z + 1}{z - 1} \right)^{\mu / 2} Q_\nu^{(-\mu, \mu)}(z).
\]
Similarly, the Ferrers functions $\mathsf{P}_\nu^\mu (x)$ and $\mathsf{Q}_\nu ^\mu  (x)$ are related to the Jacobi function of the first kind and the associated function $\DQ_\nu ^{(\alpha ,\beta )} (x)$ via
\[
\mathsf{P}_\nu^\mu (x) = \frac{\Gamma(\nu + 1)}{\Gamma(\nu - \mu + 1)} \left( \frac{1+x}{1-x} \right)^{\mu / 2} P_\nu^{(-\mu, \mu)}(x),
\]
and
\[
\mathsf{Q}_\nu ^\mu  (x) = \frac{\Gamma (\nu  + 1)}{\Gamma (\nu  - \mu  + 1)}\left( \frac{1 + x}{1 - x} \right)^{\mu /2} \DQ_\nu ^{(-\mu ,\mu )} (x).
\]
For these representations, as well as alternative forms involving symmetric Jacobi functions or the choice $\beta = \pm\frac{1}{2}$, see \cite[\S 2.3]{Cohl2025}. Thus, the results of this paper can also be directly applied to these functions. However, the resulting expansions differ from those given in \cite{Nemes2020}.

Prior to presenting our main results, we briefly review existing asymptotic results in the literature. Darboux \cite{Darboux1878} established the leading-order asymptotic behaviour of the Jacobi polynomials $P_n ^{(\alpha ,\beta )} (z)$ for $-1 < z < 1$ and $z \in \mathbb{C} \setminus \left[-1,1\right]$, away from the points $z = \pm 1$, in the case of large $n$ with fixed $\alpha$ and $\beta$. Vinogradov \cite{Vinogradov2013} derived an estimate for the absolute error of Darboux's approximation. Watson \cite{Watson1918} provided complete asymptotic expansions for the associated hypergeometric functions, with the coefficients in these expansions being notably complicated. In the case of Jacobi polynomials, Watson's results were presented by Szeg\H{o} \cite[Theorem 8.21.9]{Szego1939}, who also derived a complicated uniform asymptotic expansion in terms of Bessel functions for $P_n^{(\alpha ,\beta)} (x)$ for large $n$ over the interval $-1 < x \le 1$ \cite{Szego1933}. Hahn \cite{Hahn1980} derived factorial series expansions for the Jacobi function $P_n^{(\alpha ,\beta )}(z)$ for $-1 < z < 1$ and $\Im(z) < 0$, away from the points $z = \pm 1$, in the case of large positive $n$ with fixed $\alpha$ and $\beta$. These expansions were complemented with error bounds for $-\frac{1}{2} < \alpha, \beta < \frac{1}{2}$. McCabe and Connor \cite{McCabe1996} provided the leading-order asymptotic behaviour of $P_n^{(\alpha ,\beta )}(x)$, $\SzQ_n^{(\alpha ,\beta )}(x)$, and $\DQ_n^{(\alpha ,\beta )}(x)$ for $0 < x < 1$ in the case of large $n$ with fixed $\alpha$ and $\beta$. Several authors have derived uniform asymptotic approximations or expansions of the Jacobi functions in terms of Bessel functions, which remain valid at the endpoint $z = 1$ of the interval $-1 \le z \le 1$, with fixed parameters $\alpha$ and $\beta$ in specific real or complex domains \cite{Bai2007,Baratella1991,Durand2019,Elliott1971,Frenzen1985,Gatteschi1994,Wong1992,Wong1996,Wong2003,Wong2004,Zhu2006}. In many of these works, the expansions are accompanied by error bounds. Jones \cite{Jones2001} obtained large-$\nu$ uniform asymptotic expansions for the hypergeometric functions associated with $P_\nu^{(\alpha ,\beta )}(z)$ for $z$ outside the interval $-1 < z < 1$. Finally, several studies have focused on obtaining large-$n$ asymptotic results for the Jacobi polynomials $P_n^{(\alpha ,\beta )}(z)$ that are uniform in $\alpha$ and $\beta$ within certain real or complex domains \cite{Chen1991,Kuijlaars2004,Aoki2019,Gil2021,Szehr2022}.

Before stating our main results, we introduce some necessary notation. For any non-negative integer $n$ and $\mu \in \mathbb{C}$, we define
\begin{equation}\label{andef}
a_n (\mu) = \frac{\left(4\mu^2 - 1^2\right) \left(4\mu^2 - 3^2\right) \cdots \left(4\mu^2 - (2n - 1)^2\right)}{8^n n!} = \frac{\left(\frac{1}{2} + \mu\right)_n \left(\frac{1}{2} - \mu\right)_n}{\left(-2\right)^n  n!},
\end{equation}
where $(w)_n$ denotes the Pochhammer symbol, defined by $(w)_n = w(w + 1)\cdots(w + n - 1)$ for $n \ge 1$, and $(w)_0 = 1$. We will use $B(w_1,w_2)$ to denote the beta function \cite[\href{http://dlmf.nist.gov/5.12}{\S5.12}]{NIST:DLMF} and $\mathcal{D}$ to represent the domain
\begin{equation}\label{Ddef}
\mathcal{D} = \left\{ \xi :\Re(\xi ) > 0,\; \left| \Im(\xi ) \right| < \tfrac{\pi }{2} \right\}.
\end{equation}
Note that $\cosh(2\xi)$ is a biholomorphic bijection between $\mathcal{D}$ and $\mathbb{C}\setminus \left(-\infty,1\right]$. We further define, for $\xi \in \mathcal{D}$ and $\alpha \in \mathbb{C}$,
\begin{equation}\label{Cdef}
C(\xi,\alpha )  = \begin{cases} \sin (\pi \alpha ) & \text{if }\; \xi>0, \\ \mp \im \e^{ \pm \pi \im\alpha } & \text{if }\; 0 <  \pm \Im(\xi ) < \frac{\pi}{2}. \end{cases}
\end{equation} 

The two theorems below present inverse factorial expansions for the Jacobi and associated functions as the order $\nu$ becomes large. Unlike standard asymptotic power series, which are expressed in negative powers of the large variable, inverse factorial expansions are asymptotic series involving gamma functions with decreasing arguments that depend on the large variable. One advantage of using these expansions in the case of Jacobi and associated functions is that the coefficients can be expressed in simple, closed forms. As Szeg\H{o} noted \cite[p. 196]{Szego1939}, the corresponding asymptotic power series involve considerably more complicated coefficients. Moreover, as will be shown in Section \ref{bounds}, these inverse factorial expansions allow for explicit and computable error bounds, which would be much more difficult to obtain using standard asymptotic power series. However, as mentioned in the introduction, a drawback of inverse factorial expansions compared to uniform asymptotic expansions in terms of special functions is their breakdown near the transition points $z = \pm 1$ of \eqref{JacobiEq}. Finally, we note that inverse factorial expansions also arise naturally in exponential asymptotics; see, for example, \cite{OldeDaalhuis1998,OldeDaalhuis2004}.

\begin{theorem}\label{thm1} Assume that $\xi  \in  \mathcal{D}$, that $\left| \sinh \xi \right|,\left| \cosh \xi \right|  \geq  \varepsilon$ ($> 0$), and that $\alpha,\beta  \in \mathbb{C}$ are bounded. Under these conditions, the Jacobi functions admit the following inverse factorial expansions:
\begin{gather}\label{Pifacasymp}
\begin{split}
2^{2\nu + \alpha + \beta + 1} & B(\nu+1,\nu + \alpha + \beta + 1)P_\nu^{(\alpha ,\beta )} (\cosh(2\xi) ) \sim \\ & \left( \frac{\e^{\xi } }{\sinh \xi} \right)^{\alpha + 1/2} \left( \frac{\e^{\xi } }{\cosh \xi} \right)^{\beta + 1/2} \e^{2\nu\xi }  \sum_{n = 0}^\infty g_n ( -\xi,\alpha ,\beta )\frac{\Gamma (2\nu + \alpha + \beta  -n + 1)}{\Gamma (2\nu  + \alpha  + \beta  + 2)} 
\\ & - C(\xi ,\alpha )\left( \frac{\e^{ - \xi } }{\sinh \xi} \right)^{\alpha + 1/2} \left( \frac{\e^{ - \xi } }{\cosh \xi} \right)^{\beta + 1/2} \e^{ - 2\nu\xi } \sum_{m = 0}^\infty g_m ( \xi,\alpha ,\beta )\frac{\Gamma (2\nu + \alpha + \beta  -m + 1)}{\Gamma (2\nu  + \alpha  + \beta  + 2)} ,
\end{split}
\end{gather}
as $\nu\to\infty$ within the sector $\left|\arg \nu \right|\le \frac{\pi}{2}-\delta$ ($<\frac{\pi}{2}$), and
\begin{gather}\label{Qifacasymp}
\begin{split}
\frac{2^{2\nu + \alpha + \beta + 1}}{\pi} & B(\nu + 1, \nu + \alpha + \beta + 1) Q_\nu^{(\alpha, \beta)}(\cosh(2\xi)) \sim\\ &\left( \frac{\e^{-\xi}}{\sinh \xi} \right)^{\alpha + 1/2} \left( \frac{\e^{-\xi}}{\cosh \xi} \right)^{\beta + 1/2} \e^{-2\nu \xi}  \sum_{n = 0}^\infty g_n(\xi, \alpha, \beta) \frac{\Gamma(2\nu + \alpha + \beta - n + 1)}{\Gamma(2\nu + \alpha + \beta + 2)},
\end{split}
\end{gather}
as $\nu\to\infty$ in the sector $\left|\arg \nu \right|\le \pi-\delta$ ($<\pi$). The coefficients $g_n(\pm \xi,\alpha ,\beta )$ are given by
\[
g_n (\pm \xi,\alpha ,\beta ) = \sum\limits_{\ell  = 0}^n   (\pm 1)^\ell a_\ell  (\alpha )a_{n - \ell } (\beta )\left( \frac{\e^{ \pm \xi} }{\sinh \xi} \right)^\ell  \left( \frac{\e^{ \pm \xi} }{\cosh \xi} \right)^{n - \ell } .
\]
The fractional powers are defined to be positive for positive real $\xi$ and are defined by continuity elsewhere.
\end{theorem}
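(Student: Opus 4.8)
The plan is to prove the single-series expansion \eqref{Qifacasymp} for $Q_\nu^{(\alpha,\beta)}$ first, and then to obtain \eqref{Pifacasymp} from it through a connection formula. Write $\lambda=2\nu+\alpha+\beta+1$ and $z=\cosh(2\xi)$, so that $z-1=2\sinh^2\xi$, $z+1=2\cosh^2\xi$ and $2/(1-z)=-1/\sinh^2\xi$. Substituting these into \eqref{Qdef} and inserting Euler's integral for the ${}_2F_1$, the regularising gamma factors cancel and one is left with the exact representation
\[
Q_\nu^{(\alpha,\beta)}(\cosh(2\xi))=\tfrac12(\sinh\xi)^{-2\alpha}(\cosh\xi)^{-2\beta}\int_0^1\frac{t^{\nu+\alpha}(1-t)^{\nu+\beta}}{(\sinh^2\xi+t)^{\nu+1}}\,\d t.
\]
The $\nu$-dependent part of the integrand is $R(t)^\nu$ with $R(t)=t(1-t)/(\sinh^2\xi+t)$, whose saddle lies at $t_\ast=\e^{-\xi}\sinh\xi\in(0,1)$ and satisfies $R(t_\ast)=\e^{-2\xi}$; this is the origin of the factor $\e^{-2\nu\xi}$ in \eqref{Qifacasymp}. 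For those $\nu$ in the sector $|\arg\nu|\le\pi-\delta$ with $\Re\nu$ too small for Euler's formula, I would replace the segment $[0,1]$ by a Pochhammer-type loop and argue by analytic continuation in $\nu$.

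Applying Laplace's method to this integral directly would reproduce the asymptotic power series with the complicated coefficients noted by Szeg\H{o}. To obtain instead an inverse factorial series with the simple coefficients of \eqref{Qifacasymp}, I would recast the normalised left-hand side exactly as a gamma-function integral of the form
\[
\frac{2^{2\nu+\alpha+\beta+1}}{\pi}B(\nu+1,\nu+\alpha+\beta+1)Q_\nu^{(\alpha,\beta)}(\cosh(2\xi))=\frac{A(\xi,\alpha,\beta)}{\Gamma(\lambda+1)}\int_0^\infty\e^{-s}s^{\lambda-1}g(s)\,\d s,
\]
with an elementary prefactor $A$ and a function $g$ holomorphic in a sector containing the positive real axis. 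Since $\e^{-s}s^{\lambda-1}$ concentrates near the large value $s=\lambda-1$, the expansion is governed by the behaviour of $g$ at infinity: once $g(s)\sim\sum_n g_n s^{-n}$ is established, the identity $\frac{1}{\Gamma(\lambda+1)}\int_0^\infty\e^{-s}s^{\lambda-1-n}\,\d s=\Gamma(\lambda-n)/\Gamma(\lambda+1)$ reproduces \eqref{Qifacasymp} term by term. Truncating $g$ with the integral form of its remainder leaves an explicit remainder integral; estimating it is precisely the task deferred to Section \ref{bounds}, and the hypotheses $|\sinh\xi|,|\cosh\xi|\ge\varepsilon$ are what keep $g$ regular and the bounds uniform.

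The crux is to identify the coefficients, that is, to prove that $g(s)\sim\sum_n g_n(\xi,\alpha,\beta)s^{-n}$. Here I would exploit that \eqref{JacobiEq} has its two finite singularities at $z=\pm1$, i.e.\ at $\sinh\xi=0$ and $\cosh\xi=0$: each should contribute to $g$ a Hankel-type factor, one of order $\alpha$ carrying the variable $\e^{\pm\xi}/\sinh\xi$ and one of order $\beta$ carrying $\e^{\pm\xi}/\cosh\xi$. Expanding each factor by its classical large-argument (Hankel) series, whose coefficients are exactly the numbers $a_n$ of \eqref{andef}, and taking the Cauchy product of the two series then yields $g_n=\sum_\ell(\pm1)^\ell a_\ell(\alpha)a_{n-\ell}(\beta)(\e^{\pm\xi}/\sinh\xi)^\ell(\e^{\pm\xi}/\cosh\xi)^{n-\ell}$. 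Establishing this factorisation of $g$ and checking that the half-integer powers $\alpha+\tfrac12$ and $\beta+\tfrac12$ (which come from the square-root normalisation of the two Hankel factors) and all the signs emerge correctly is the main technical obstacle.

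To pass from $Q$ to $P$, I would use the connection formula expressing $P_\nu^{(\alpha,\beta)}(\cosh(2\xi))$ as a linear combination of $Q_\nu^{(\alpha,\beta)}(\cosh(2\xi))$ and the second solution of \eqref{JacobiEq} that is dominant at $z=\infty$. The dominant solution carries $\e^{2\nu\xi}$ and has the expansion obtained from \eqref{Qifacasymp} under the formal replacement $\xi\mapsto-\xi$, which turns $g_n(\xi,\alpha,\beta)$ into $g_n(-\xi,\alpha,\beta)$; thus the two series in \eqref{Pifacasymp} are exactly the dominant and recessive contributions. The recessive part of $P$ is proportional to $Q$, and to pin down the proportionality constant I would take \eqref{PQrel} on $(-1,1)$, apply \eqref{Qifacasymp} to the two boundary values $Q_\nu^{(\alpha,\beta)}(x\pm\im 0)$—which correspond to $\xi$ approaching the imaginary axis with $0<\pm\Im\xi<\tfrac{\pi}{2}$—and continue the resulting identity analytically throughout $\mathcal{D}$; this produces precisely the piecewise coefficient $C(\xi,\alpha)$ of \eqref{Cdef}. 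The sector for $P$ narrows to $|\arg\nu|\le\tfrac{\pi}{2}-\delta$ because both exponentials $\e^{\pm2\nu\xi}$ are now present and the recessive one must remain genuinely subdominant. Beyond the coefficient factorisation, the second delicate point is this branch bookkeeping: tracking the determinations of the fractional powers and of the connection coefficient as $\xi$ traverses the three regimes of \eqref{Cdef}.
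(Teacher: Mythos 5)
Your overall skeleton---an exact integral representation for $Q_\nu^{(\alpha,\beta)}$, recast so that the large parameter enters only through a factor $s^{\lambda-1}\e^{-s}$, the kernel $g$ expanded at infinity as a product of two Hankel-type series in $\e^{\pm\xi}/\sinh\xi$ and $\e^{\pm\xi}/\cosh\xi$ with coefficients $a_n$, term-by-term integration, and a connection formula producing the second series and the Stokes factor $C(\xi,\alpha)$ for $P$---is essentially the skeleton of the paper's proof. The problem is that you leave the crux unproved, and the heuristic you offer for it is not an argument. You assert that, because \eqref{JacobiEq} has its finite singularities at $z=\pm1$, the kernel $g$ ``should'' factor into a piece of order $\alpha$ carrying $\e^{\pm\xi}/\sinh\xi$ times a piece of order $\beta$ carrying $\e^{\pm\xi}/\cosh\xi$; nothing in your Euler-integral representation delivers this, and a direct saddle-point treatment of that integral gives exactly the complicated power series you are trying to avoid. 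What makes the factorization true is a specific nontrivial identity: the Jacobi functions admit integral representations whose integrands are literally products of modified Bessel functions, namely \eqref{eq9} (with $I_\alpha K_\beta$) and \eqref{integralrep1} (with $K_\alpha K_\beta$), which the paper quotes by combining \cite{Cohl2023} with \cite[Eqs.\ (6.8.44), (6.8.47)]{Erdelyi1954}. After the substitution $t=s\e^{\mp\xi}$ these are precisely your $\int_0^\infty \e^{-s}s^{\lambda-1}g(s)\id s$ with $g$ genuinely factored; Lemma \ref{lemma1} (remainder bounds for the large-argument expansion of $K_\mu$) then yields the Cauchy-product coefficients with explicit error terms, and the connection formulas expressing $I_\alpha$ through $K_\alpha(w\e^{\mp\pi\im})$ and $K_\alpha(w)$, applied \emph{inside} the integrand, generate the two series in \eqref{Pifacasymp} together with the piecewise constant \eqref{Cdef}. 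Passing from your representation to such a factored form is equivalent to proving Erd\'elyi's Bessel-product Mellin-transform formula, which you neither cite nor derive; flagging it as ``the main technical obstacle'' does not discharge it.

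There is a second gap concerning the stated ranges of validity. Any argument of the type you sketch (like the paper's own error-bound Theorems \ref{Pinfacthm} and \ref{Qinfacthm}) yields the expansions only as $\Re(\nu)\to+\infty$, and the Bessel remainder bounds effectively force $\left|\Re(\alpha)\right|,\left|\Re(\beta)\right|<\tfrac12$; Theorem \ref{thm1}, however, claims validity in the sectors $\left|\arg\nu\right|\le\pi-\delta$ (for $Q$) and $\left|\arg\nu\right|\le\tfrac{\pi}{2}-\delta$ (for $P$) for arbitrary bounded complex $\alpha,\beta$. The paper needs a separate mechanism for this extension: Watson's large-parameter expansions \cite{Watson1918} of the hypergeometric functions in \eqref{Pdef}--\eqref{Qdef}, conversion of those expansions into inverse factorial form via the gamma-function expansion \eqref{Gamma}, and identification of the resulting coefficients with $g_n$ by uniqueness of asymptotic coefficients plus analytic continuation in $\alpha,\beta$ (equation \eqref{coeffidentity}). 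Your Pochhammer-loop remark continues the \emph{representation} to small $\Re(\nu)$, but it does not give the \emph{asymptotics} for $\nu$ with large $\left|\arg\nu\right|$, so the claimed sectors remain out of reach in your scheme. Relatedly, your plan to fix $C(\xi,\alpha)$ by continuing an asymptotic identity off the cut is the delicate point you yourself flag: asymptotic relations do not continue analytically across Stokes lines, and in the paper the three regimes of \eqref{Cdef} arise instead from the exact $I$--$K$ connection formulas together with the averaged boundary values on $\xi>0$, before any asymptotics are performed.
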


\begin{theorem}\label{thm2}
Suppose that ($0 <$) $\varepsilon  \leq  \zeta  \leq  \frac{\pi}{2} - \varepsilon$ ($< \frac{\pi}{2}$), and let $\alpha,\beta  \in \mathbb{C}$ be bounded. Define
\begin{gather}\label{zetadef1}
\begin{split}
& \zeta^{(1)}_{\nu,n,\ell} = (2\nu + \alpha + \beta - n + 1)\zeta - \left( \alpha - \ell + \tfrac{1}{2} \right) \tfrac{\pi}{2},\\ & \zeta^{(2)}_{\nu ,n,\ell }= (2\nu + \alpha + \beta - n + 1)\zeta  + \left( \alpha  + \ell  + \tfrac{1}{2} \right)\tfrac{\pi}{2},
\end{split}
\end{gather}
where $n$ and $\ell$ are non-negative integers. Then, the Jacobi function and the associated functions admit the following inverse factorial expansions:
\begin{gather}\label{invfexp1}
\begin{split}
2^{2\nu + \alpha + \beta} & B(\nu + 1,\nu + \alpha + \beta + 1)P_\nu^{(\alpha, \beta)} (\cos(2\zeta)) \sim\\ & \frac{1}{\sin^{\alpha + 1/2} \zeta \cos^{\beta + 1/2} \zeta}  \sum\limits_{n = 0}^\infty \left(\sum\limits_{\ell = 0}^n \frac{a_\ell (\alpha) a_{n - \ell} (\beta)}{\sin^\ell \zeta \cos^{n- \ell} \zeta} \cos \left(\zeta^{(1)}_{\nu,n,\ell}\right)\right) \frac{\Gamma(2\nu + \alpha + \beta - n + 1)}{\Gamma(2\nu+\alpha + \beta+2)},
\end{split}
\end{gather}
\begin{gather}\label{invfexp2}
\begin{split}
\frac{2^{2\nu + \alpha + \beta + 1}}{\pi} & B(\nu + 1,\nu + \alpha + \beta + 1) \SzQ_\nu^{(\alpha, \beta)} (\cos(2\zeta)) \sim \\ & \frac{1}{\sin^{\alpha + 1/2} \zeta \cos^{\beta + 1/2} \zeta} \sum_{n = 0}^\infty \left(\sum\limits_{\ell = 0}^n \frac{a_\ell (\alpha) a_{n - \ell} (\beta)}{\sin^\ell \zeta \cos^{n- \ell} \zeta} \cos \left(\zeta^{(2)}_{\nu,n,\ell}\right)\right) \frac{\Gamma(2\nu + \alpha + \beta - n + 1)}{\Gamma(2\nu + \alpha + \beta + 2)} ,
\end{split}
\end{gather}
\begin{gather}\label{invfexp3}
\begin{split}
\frac{2^{2\nu + \alpha + \beta + 1}}{\pi} & B(\nu + 1, \nu + \alpha + \beta + 1) \DQ_\nu^{(\alpha, \beta)} (\cos(2\zeta)) \sim \\ & \frac{-1}{\sin^{\alpha + 1/2} \zeta \cos^{\beta + 1/2} \zeta} \sum_{n = 0}^\infty \left(\sum\limits_{\ell = 0}^n \frac{a_\ell (\alpha) a_{n - \ell} (\beta)}{\sin^\ell \zeta \cos^{n- \ell} \zeta} \sin \left(\zeta^{(1)}_{\nu,n,\ell}\right)\right) \frac{\Gamma(2\nu + \alpha + \beta - n + 1)}{\Gamma(2\nu + \alpha + \beta + 2)},
\end{split}
\end{gather}
as $\nu\to\infty$ in the sector $\left|\arg \nu \right|\le \pi-\delta$ ($<\pi$).
\end{theorem}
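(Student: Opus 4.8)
The plan is to derive all three expansions of Theorem \ref{thm2} from the \emph{single} inverse factorial expansion \eqref{Qifacasymp} for $Q_\nu^{(\alpha,\beta)}$ furnished by Theorem \ref{thm1}, by specialising $\xi$ to the boundary of $\mathcal{D}$ and then invoking the connection formulas \eqref{PQrel}, \eqref{Qfuncdef} and \eqref{Qfuncdef2}. The starting observation is that $\cosh(2\xi)$ maps the two boundary segments $\xi=\pm\im\zeta$, $\zeta\in(0,\tfrac{\pi}{2})$, onto the interval $(-1,1)$: writing $\xi=s\pm\im\zeta$ and letting $s\to0^+$ gives $\cosh(2\xi)\to\cos(2\zeta)\pm\im 0^+$, so $Q_\nu^{(\alpha,\beta)}(\cosh(2\xi))$ tends to the boundary value of $Q_\nu^{(\alpha,\beta)}$ approached from the upper (resp.\ lower) side of the cut. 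Since each of $P_\nu^{(\alpha,\beta)}(\cos(2\zeta))$, $\SzQ_\nu^{(\alpha,\beta)}(\cos(2\zeta))$ and $\DQ_\nu^{(\alpha,\beta)}(\cos(2\zeta))$ is, by \eqref{PQrel}, \eqref{Qfuncdef} and \eqref{Qfuncdef2}, an explicit linear combination of these two one-sided boundary values, expansions for all three follow once the expansion of $Q$ on each side of the cut is known.

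The computational core is to substitute $\xi=\pm\im\zeta$ into \eqref{Qifacasymp}, using $\sinh(\pm\im\zeta)=\pm\im\sin\zeta$ and $\cosh(\pm\im\zeta)=\cos\zeta$. The branches of the fractional powers must be tracked by continuity from $\xi>0$ into the upper, respectively lower, half of $\mathcal{D}$ (this is the same delicacy that underlies the case distinction in the definition \eqref{Cdef} of $C(\xi,\alpha)$; for the upper segment one checks that $\arg(\e^{-\xi}/\sinh\xi)\to-(\zeta+\tfrac{\pi}{2})$, consistent with $1/\im=\e^{-\im\pi/2}$). A short calculation then shows that the prefactor $(\e^{-\xi}/\sinh\xi)^{\alpha+1/2}(\e^{-\xi}/\cosh\xi)^{\beta+1/2}\e^{-2\nu\xi}$ and the coefficients $g_n(\xi,\alpha,\beta)$ collapse into the real factor $\sin^{-\alpha-1/2}\zeta\,\cos^{-\beta-1/2}\zeta$ multiplied by pure phases, with the $(n,\ell)$ contribution carrying the phase $\e^{\mp\im\zeta^{(2)}_{\nu,n,\ell}}$ for $\zeta^{(2)}_{\nu,n,\ell}$ as in \eqref{zetadef1}. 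Concretely, this yields for the two boundary values
\[
\tfrac{2^{2\nu+\alpha+\beta+1}}{\pi}B(\nu+1,\nu+\alpha+\beta+1)\,Q_\nu^{(\alpha,\beta)}(\cos(2\zeta)\pm\im 0^+)\sim\tfrac{1}{\sin^{\alpha+1/2}\zeta\,\cos^{\beta+1/2}\zeta}\sum_{n=0}^\infty\Big(\sum_{\ell=0}^n\tfrac{a_\ell(\alpha)a_{n-\ell}(\beta)}{\sin^\ell\zeta\,\cos^{n-\ell}\zeta}\e^{\mp\im\zeta^{(2)}_{\nu,n,\ell}}\Big)\tfrac{\Gamma(2\nu+\alpha+\beta-n+1)}{\Gamma(2\nu+\alpha+\beta+2)}.
\]

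It then remains to assemble the three identities. Adding the two boundary expansions and halving gives \eqref{invfexp2} at once, since $\tfrac12(\e^{-\im\zeta^{(2)}}+\e^{\im\zeta^{(2)}})=\cos\zeta^{(2)}$. For the Durand function, \eqref{Qfuncdef2} produces the combination $\tfrac12(\e^{\pi\alpha\im}\e^{-\im\zeta^{(2)}}+\e^{-\pi\alpha\im}\e^{\im\zeta^{(2)}})=\cos(\zeta^{(2)}_{\nu,n,\ell}-\pi\alpha)$, while \eqref{PQrel} produces, after accounting for the factor $\im/\pi$, the combination $\sin(\zeta^{(2)}_{\nu,n,\ell}-\pi\alpha)$. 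The elementary identity $\zeta^{(2)}_{\nu,n,\ell}-\pi\alpha=\zeta^{(1)}_{\nu,n,\ell}+\tfrac{\pi}{2}$, immediate from \eqref{zetadef1}, converts these respectively into $-\sin\zeta^{(1)}$ and $\cos\zeta^{(1)}$, producing \eqref{invfexp3} and \eqref{invfexp1}; the normalisation $2^{2\nu+\alpha+\beta}$ in \eqref{invfexp1}, rather than $2^{2\nu+\alpha+\beta+1}$, absorbs the factor $\tfrac{1}{2}$ coming from the $\im/\pi$ in \eqref{PQrel}. Note that this route yields \eqref{invfexp1} in the full sector $|\arg\nu|\le\pi-\delta$, matching the sector of \eqref{Qifacasymp}, which is wider than that of \eqref{Pifacasymp}.

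The one genuine obstacle — the step I expect to require the most care — is that Theorem \ref{thm1} is stated for $\xi$ in the \emph{open} domain $\mathcal{D}$, whereas the points $\xi=\pm\im\zeta$ lie on its boundary $\Re(\xi)=0$, so the limit $\Re(\xi)\to0^+$ must be justified \emph{inside} the asymptotic relation. The hypothesis $\varepsilon\le\zeta\le\tfrac{\pi}{2}-\varepsilon$ is precisely what keeps these boundary points uniformly away from the two corners $\xi=0$ and $\xi=\tfrac{\im\pi}{2}$ at which $\sinh\xi$ and $\cosh\xi$ vanish, ensuring that $|\sinh\xi|$ and $|\cosh\xi|$ remain bounded below by a positive constant throughout. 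I would close the gap by appealing to the explicit and computable error bounds for \eqref{Qifacasymp} established in Section \ref{bounds}: these remain finite and of the stated order uniformly as $\Re(\xi)\to0^+$ along the relevant segment, which legitimises evaluating the expansion on the boundary and hence completes the argument, the one-sided limits $\lim_{\varepsilon\to0^+}Q_\nu^{(\alpha,\beta)}(\cos(2\zeta)\pm\im\varepsilon)$ existing by the same analyticity of $Q_\nu^{(\alpha,\beta)}$ that is already used in \eqref{PQrel}--\eqref{Qfuncdef2}.
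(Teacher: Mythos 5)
Your overall route is the same as the paper's: the paper also obtains Theorem \ref{thm2} by writing $P_\nu^{(\alpha,\beta)}(\cos(2\zeta))$, $\SzQ_\nu^{(\alpha,\beta)}(\cos(2\zeta))$ and $\DQ_\nu^{(\alpha,\beta)}(\cos(2\zeta))$ as the combinations \eqref{PQrel2}--\eqref{Qrel4} of the two boundary values $Q_\nu^{(\alpha,\beta)}(\cosh(2\varepsilon\pm 2\im\zeta))$, inserting the inverse factorial expansion of $Q$ at $\xi=\pm\im\zeta$, and carrying out exactly the phase bookkeeping you describe; your computations (the phases $\e^{\mp\im\zeta^{(2)}_{\nu,n,\ell}}$, the identity $\zeta^{(2)}_{\nu,n,\ell}-\pi\alpha=\zeta^{(1)}_{\nu,n,\ell}+\tfrac{\pi}{2}$, and the absorption of the factor $\tfrac12$ into the normalisation $2^{2\nu+\alpha+\beta}$) are all correct. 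You also correctly isolate the one nontrivial analytic point: \eqref{Qifacasymp} must be known to hold on the boundary segment $\Re(\xi)=0$, in the full sector $\left|\arg\nu\right|\le\pi-\delta$, and for arbitrary bounded complex $\alpha,\beta$.

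However, your proposed fix of that point --- appealing to the error bounds of Section \ref{bounds} --- does not suffice, and this is a genuine gap. The bounds of Theorem \ref{Qinfacthm} (and their boundary analogues, which the paper indeed establishes at $\xi=\pm\im\zeta$ in Section \ref{infproof2}) control $\left|R_N^{(Q)}\right|$ by a multiple of $\Gamma(\Re(2\nu+\alpha+\beta)-N+1)/\left|\Gamma(2\nu+\alpha+\beta+2)\right|$, under the hypotheses $\Re(2\nu+\alpha+\beta+1)>N$ and $\left|\Re(\alpha)\right|<\tfrac12$ or $\left|\Re(\beta)\right|<\tfrac12$. Such a bound is of the order of the first neglected term $\left|\Gamma(2\nu+\alpha+\beta-N+1)\right|/\left|\Gamma(2\nu+\alpha+\beta+2)\right|$ only when $\Gamma(\Re(w))\asymp\left|\Gamma(w)\right|$ for $w=2\nu+\alpha+\beta-N+1$; by Stirling this ratio grows roughly like $\exp\!\left((\Im(w))^2/(2\Re(w))\right)$, so it is unbounded once $\Im(\nu)$ grows faster than $(\Re(\nu))^{1/2}$, and the hypotheses themselves exclude $\Re(\nu)\le 0$, i.e.\ the part of the sector with $\left|\arg\nu\right|$ near $\pi$. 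Hence the error bounds deliver the expansion only in the regime the paper itself attributes to them ($\Re(\nu)\to+\infty$ with $\Im(\nu)$ fixed, and restricted $\alpha,\beta$), not in the sector $\left|\arg\nu\right|\le\pi-\delta$ with arbitrary bounded $\alpha,\beta$ asserted in Theorem \ref{thm2}. The paper closes this gap by a different mechanism, which is the actual substance of its proof: Watson's large-$\nu$ expansions of the hypergeometric representations, valid on $\overline{\mathcal{D}}$ (boundary included), in the full sector, and for all $\alpha,\beta$; these are converted into inverse factorial form via \eqref{Gamma}, and their coefficients are identified with $g_n$ through uniqueness of asymptotic coefficients in the restricted regime together with the analytic continuation argument around \eqref{coeffidentity}. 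Some argument of this kind must replace your appeal to the error bounds; as written, your proof establishes Theorem \ref{thm2} only for $\Re(\nu)\to+\infty$ with $\Im(\nu)$ fixed and for parameters with $\left|\Re(\alpha)\right|<\tfrac12$ or $\left|\Re(\beta)\right|<\tfrac12$.
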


The proofs of Theorems \ref{thm1} and \ref{thm2} are presented in Section \ref{asympproofs}. To our knowledge, inverse factorial-type expansions for these functions have not previously been established in the literature.

We note that if $2\alpha$ and $2\beta$ are odd integers, these inverse factorial expansions terminate, providing an exact representation of the corresponding function (see Sections \ref{infproof1} and \ref{infproof2}).

The next theorem presents factorial expansions for the Jacobi functions. Like inverse factorial expansions, factorial expansions involve gamma functions; however, in this case, the gamma functions have increasing arguments. A key distinction is that, for certain values of $z$, these expansions are not merely asymptotic but actually convergent. As before, the coefficients admit simple representations, and explicit, computable error bounds can be obtained (see Section \ref{bounds}). However, a limitation is that factorial expansions, like their inverse counterparts, break down near the transition points $z = \pm 1$ of \eqref{JacobiEq}. Factorial expansions have a long history and are well-established in asymptotic analysis. For further details, see, for example, \cite[Ch. 7]{Lauwerier1974} and \cite{Delabaere2007}.

\begin{theorem}\label{thm3} Suppose that $\xi  \in  \mathcal{D}$, that $\left| \sinh \xi \right|,\left| \cosh \xi \right|  \geq  \varepsilon$ ($> 0$), and that $\alpha,\beta  \in \mathbb{C}$ are bounded. Under these conditions, the Jacobi functions admit factorial expansions given by
\begin{gather}\label{Pfacasymp}
\begin{split}
 &\frac{\pi P_\nu^{(\alpha ,\beta )} (\cosh (2\xi ))}{2^{2\nu + \alpha  + \beta } B(\nu + \alpha  + 1,\nu + \beta  + 1)}
 \sim  \\ & \left( \frac{\e^{ \xi }}{\sinh \xi} \right)^{\alpha  + 1/2} \left( \frac{\e^{ \xi }}{\cosh \xi} \right)^{\beta  + 1/2} \e^{2\nu \xi } \sum_{n = 0}^\infty (-1)^n g_n(\xi,\alpha,\beta)\frac{\Gamma (2\nu+\alpha+\beta+2)}{\Gamma(2\nu+\alpha+\beta+n+2)} 
\\ & - C(\xi ,\alpha ) \left( \frac{\e^{ - \xi }}{\sinh \xi} \right)^{\alpha  + 1/2} \left( \frac{\e^{ - \xi }}{\cosh \xi} \right)^{\beta  + 1/2} \e^{ - 2\nu \xi } \sum_{m = 0}^\infty (-1)^m g_m(-\xi,\alpha,\beta)\frac{\Gamma (2\nu+\alpha+\beta+2)}{\Gamma(2\nu+\alpha+\beta+m+2)} 
\end{split}
\end{gather}
as $\nu\to\infty$ in the sector $\left|\arg \nu \right|\le \frac{\pi}{2}-\delta$ ($<\frac{\pi}{2}$), and
\begin{gather}\label{Qfacasymp}
\begin{split}
& \frac{ Q_\nu ^{(\alpha ,\beta )} (\cosh (2\xi) )}{2^{2\nu  + \alpha  + \beta } B(\nu  + \alpha  + 1,\nu  + \beta  + 1)} \sim\\ & \left( \frac{\e^{ - \xi }}{\sinh \xi} \right)^{\alpha  + 1/2} \left( \frac{\e^{ - \xi }}{\cosh \xi} \right)^{\beta  + 1/2} \e^{ - 2\nu \xi } \sum_{n = 0}^\infty (-1)^n g_n(-\xi,\alpha,\beta)\frac{\Gamma (2\nu+\alpha+\beta+2)}{\Gamma(2\nu+\alpha+\beta+n+2)} 
\end{split}
\end{gather}
as $\nu\to\infty$ within the sector $\left|\arg \nu \right|\le \pi-\delta$ ($<\pi$). The coefficients $g_n(\pm \xi,\alpha ,\beta )$ are the same as those appearing in the corresponding inverse factorial series. The fractional powers are defined to be positive for positive real $\xi$ and are defined by continuity elsewhere.
\end{theorem}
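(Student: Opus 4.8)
The plan is to obtain the factorial expansions as the ``dual'' of the inverse factorial expansions of Theorem~\ref{thm1}, extracted from one and the same integral representation. I would first reduce everything to the second-kind function. Substituting $z=\cosh(2\xi)$ into \eqref{Qdef} and using $z-1=2\sinh^2\xi$, $z+1=2\cosh^2\xi$ together with $B(\nu+\alpha+1,\nu+\beta+1)=\Gamma(\nu+\alpha+1)\Gamma(\nu+\beta+1)/\Gamma(2\nu+\alpha+\beta+2)$, the left-hand side of \eqref{Qfacasymp} collapses to an explicit elementary prefactor (a power of $2$ times $\Gamma(2\nu+\alpha+\beta+2)$ divided by powers of $\sinh\xi$ and $\cosh\xi$) multiplying
\[
\mathbf{F}\!\left(\mytop{\nu+1,\nu+\alpha+1}{2\nu+\alpha+\beta+2};-\sinh^{-2}\xi\right),
\]
so that \eqref{Qfacasymp} is equivalent to a factorial expansion of this single hypergeometric function. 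I would then represent $Q_\nu^{(\alpha,\beta)}(\cosh(2\xi))$ by a Laplace-type integral $\int_0^\infty \mathrm{e}^{-(2\nu+\alpha+\beta+1)\tau}\Phi_\xi(\tau)\,\mathrm{d}\tau$ in which $\nu$ enters only through the exponential kernel---a Jacobi analogue of Heine's integral for the Legendre function of the second kind, obtainable from Euler's integral by mapping the algebraic weight onto an exponential. The amplitude $\Phi_\xi$ is then an explicit algebraic function that factorises into a ``$\sinh$-part'' carrying the parameter $\alpha$ and a ``$\cosh$-part'' carrying $\beta$.

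The key observation is that the factorial and inverse factorial expansions are the two faces of this one integral, differing only in how $\Phi_\xi$ is expanded about $\tau=0$. Expanding $\Phi_\xi$ in powers of $\mathrm{e}^{\tau}-1$ and integrating term by term gives the beta integrals $\int_0^\infty \mathrm{e}^{-(2\nu+\alpha+\beta+1)\tau}(\mathrm{e}^{\tau}-1)^n\,\mathrm{d}\tau$, which reproduce, after absorbing elementary factors, the decreasing-argument ratios $\Gamma(2\nu+\alpha+\beta-n+1)/\Gamma(2\nu+\alpha+\beta+2)$ of Theorem~\ref{thm1}; expanding instead in powers of $1-\mathrm{e}^{-\tau}$ and using $\int_0^\infty \mathrm{e}^{-(2\nu+\alpha+\beta+1)\tau}(1-\mathrm{e}^{-\tau})^n\,\mathrm{d}\tau=B(2\nu+\alpha+\beta+1,n+1)$ produces, in the same way, the increasing-argument ratios $\Gamma(2\nu+\alpha+\beta+2)/\Gamma(2\nu+\alpha+\beta+n+2)$ of \eqref{Qfacasymp}. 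Since $1-\mathrm{e}^{-\tau}=\mathrm{e}^{-\tau}(\mathrm{e}^{\tau}-1)$, the two expansions of the factorised amplitude are related by reflecting $\xi\mapsto-\xi$ together with a sign, which is exactly the passage from the coefficient $g_n(\xi,\alpha,\beta)$ to $(-1)^n g_n(-\xi,\alpha,\beta)$; the convolution $\sum_\ell a_\ell(\alpha)a_{n-\ell}(\beta)$ defining $g_n$ is simply the Cauchy product of the two factors of $\Phi_\xi$, with the coefficients $a_\ell$ from \eqref{andef}. The stated termination when $2\alpha,2\beta$ are odd integers then corresponds to $\Phi_\xi$ being a polynomial in the expansion variable, since in that case $a_\ell(\alpha)$ and $a_\ell(\beta)$ vanish for large $\ell$. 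For $\xi$ with $\Re(\xi)$ large enough the nearest singularity of $\Phi_\xi$ in the variable $1-\mathrm{e}^{-\tau}$ is pushed away from the origin, and the classical theory of factorial series \cite[Ch.~7]{Lauwerier1974} upgrades the expansion from asymptotic to convergent, yielding the convergence claim.

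With \eqref{Qfacasymp} established, I would deduce \eqref{Pfacasymp} exactly as \eqref{Pifacasymp} is deduced from \eqref{Qifacasymp} in Theorem~\ref{thm1}. The Jacobi function of the first kind is a fixed linear combination of the two second-kind solutions of \eqref{JacobiEq}: one governed by the recessive scale $\mathrm{e}^{-2\nu\xi}$, namely $Q_\nu^{(\alpha,\beta)}(\cosh(2\xi))$ itself, which contributes the $g_m(-\xi,\alpha,\beta)$ sum with connection coefficient $-C(\xi,\alpha)$, and one governed by the dominant scale $\mathrm{e}^{2\nu\xi}$, the companion obtained under $\xi\mapsto-\xi$, which contributes the $g_n(\xi,\alpha,\beta)$ sum. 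Because this connection formula is an identity between functions, independent of whether each solution is represented by a factorial or an inverse factorial series, substituting the factorial expansions yields \eqref{Pfacasymp} with the same $C(\xi,\alpha)$ and the same two exponential scales; the piecewise definition \eqref{Cdef} merely records the branch of the fractional powers on the positive real axis versus the upper and lower halves of $\mathcal{D}$. The simultaneous presence of a dominant and a recessive series is precisely what forces the narrower sector $|\arg\nu|\le\frac{\pi}{2}-\delta$ for \eqref{Pfacasymp}, whereas the purely recessive \eqref{Qfacasymp} persists up to $|\arg\nu|\le\pi-\delta$.

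The main obstacle I anticipate is the uniform control of the remainder after truncating the term-by-term integration, carried out uniformly for bounded $\alpha,\beta$ and across the full sectors in $\arg\nu$. Two points need care. First, the natural representation converges only for $\Re(\nu)$ sufficiently large, so the expansion must be continued in $\nu$ to the sectors $|\arg\nu|\le\pi-\delta$ (respectively $\frac{\pi}{2}-\delta$); I would achieve this by rotating the contour in the Laplace-type integral and invoking the analyticity of all functions involved in $\nu$, away from the excluded poles $\nu+\alpha,\nu+\beta\in\mathbb{Z}^-$. Second, factorial-series remainders are more delicate than those furnished by Watson's lemma, since the controlling estimate is not a single endpoint contribution; here, however, it suffices to establish the order-of-magnitude bound guaranteeing the asymptotic and (for suitable $\xi$) convergent behaviour, deferring the explicit, computable constants to Section~\ref{bounds}. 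Maintaining a consistent branch of $(\mathrm{e}^{\pm\xi}/\sinh\xi)^{\alpha+1/2}(\mathrm{e}^{\pm\xi}/\cosh\xi)^{\beta+1/2}$ throughout---as permitted by the hypotheses $|\sinh\xi|,|\cosh\xi|\ge\varepsilon$ that keep $\xi$ away from the transition points---is the remaining technical thread.
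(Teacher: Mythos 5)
Your high-level skeleton (a representation of $Q_\nu^{(\alpha,\beta)}$ in which $\nu$ enters only through a Laplace kernel, term-by-term expansion of the amplitude in powers of $1-\e^{-\tau}$, then a connection formula to pass to $P_\nu^{(\alpha,\beta)}$) does parallel the paper: Hahn's double integral \eqref{Ydef}, which underlies \eqref{QHahn} and \eqref{PHahn} and drives the proofs in Section \ref{fproof1}, becomes exactly such a Laplace-type integral after the substitution $1-t=\e^{-\tau}$, and \eqref{PHahn} is precisely your connection formula with coefficient $-C(\xi,\alpha)$. However, the central mechanism you propose for producing the coefficients fails. You want the amplitude to factor pointwise as $\Phi_\xi=\Phi_1\Phi_2$, with $\Phi_1,\Phi_2$ ``explicit algebraic functions'' whose expansions carry $a_\ell(\alpha)$ and $a_m(\beta)$, so that $g_n$ arises as a Cauchy product. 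This cannot work: by \eqref{andef}, $a_\ell(\mu)$ grows like $\ell!\,2^{-\ell}$ (up to algebraic factors), so a power series with coefficients $a_\ell(\alpha)w^\ell$ has zero radius of convergence and is not the Taylor series of any function analytic at the expansion point, let alone an algebraic one. (In the inverse factorial setting this growth is harmless because the $a_\ell$ occur as coefficients of the divergent large-argument expansions of $K_\alpha$, $K_\beta$ in the Bessel-product integral \eqref{integralrep1}; for a factorial series the amplitude's expansion must converge.) If instead you give the factors honest convergent binomial expansions, their Taylor coefficients are of the type $(\tfrac{1}{2}-\alpha)_\ell/\ell!$, and the Cauchy product yields $\sum_\ell \frac{(\frac{1}{2}-\alpha)_\ell(\frac{1}{2}-\beta)_{n-\ell}}{\ell!\,(n-\ell)!}(\cdots)$, not $\sum_\ell a_\ell(\alpha)a_{n-\ell}(\beta)(\cdots)$; moreover, integrating $(1-\e^{-\tau})^n$ against the kernel produces an extra $n!$ through $B(2\nu+\alpha+\beta+1,n+1)$ which your bookkeeping does not absorb. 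The quadratic Pochhammer structure $(\tfrac{1}{2}+\alpha)_\ell(\tfrac{1}{2}-\alpha)_\ell$ in $a_\ell(\alpha)$ is exactly what the paper's auxiliary variable $s$ supplies: in \eqref{Ydef} the two binomials are expanded in the distinct variables $ts$ and $t(1-s)$, and the beta integral in $s$ converts $(\tfrac{1}{2}-\alpha')_\ell(\tfrac{1}{2}-\beta')_{n-\ell}/(\ell!(n-\ell)!)$ into $a_\ell(\alpha)a_{n-\ell}(\beta)$ with the correct normalization, the factor $\Gamma(\alpha'+\beta'+n+1)$ cancelling between the $s$- and $t$-integrals. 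Without this (or an equivalent) device you obtain the wrong coefficients.

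Two further steps would also fail as written. First, the claimed duality --- that the factorial expansion is the ``other face'' of the inverse factorial expansion of the same integral, related by $1-\e^{-\tau}=\e^{-\tau}(\e^{\tau}-1)$ and $\xi\mapsto-\xi$ --- overlooks that Theorems \ref{thm1} and \ref{thm3} expand differently normalized functions ($B(\nu+1,\nu+\alpha+\beta+1)$ versus $B(\nu+\alpha+1,\nu+\beta+1)$), and that re-expanding a series in powers of $\e^{\tau}-1$ into powers of $1-\e^{-\tau}$ is an infinite triangular change of basis, not a reflection; in the paper the two families indeed come from entirely different representations (the Mellin-type Bessel-product integral \eqref{integralrep1} versus Hahn's Euler-type double integral). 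Second, the stated generality --- bounded complex $\alpha,\beta$ and the sectors $|\arg\nu|\le\pi-\delta$, resp.\ $\tfrac{\pi}{2}-\delta$ --- is not reachable by contour rotation alone, since the term-by-term error control in the Laplace/Hahn representation requires restrictions of the type $|\Re(\alpha)|,|\Re(\beta)|<\tfrac{1}{2}$. The paper closes this gap by matching the restricted-parameter result against Watson's large-$\nu$ expansions of the underlying hypergeometric functions, using the gamma-function expansion \eqref{Gamma}, the uniqueness of asymptotic-expansion coefficients, and analytic continuation in $\alpha,\beta$ of the coefficient identity \eqref{coeffidentity2}; some substitute for this step is missing from your argument.
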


The proof of Theorem \ref{thm3} is given in Section \ref{asympproofs}. In the special case where $\Re(\xi)>0$, $-\frac{\pi}{2} < \Im(\xi)<0$ (or equivalently, $\Im(z)<0$), with real parameters $\alpha$ and $\beta$ and $\nu$ approaching infinity along positive real values, the factorial expansion \eqref{Pfacasymp} was derived by Hahn \cite{Hahn1980}. Note that some typographical errors appear in the expressions for the error term and coefficients in \cite{Hahn1980}.

We note that when $\Re(2\nu + \alpha + \beta + 1) > \Re(\alpha' + \beta')$ and $\left|\Re\left(\e^{\pm 2\xi}\right)\right| < \frac{1}{2}$, the infinite series in \eqref{Pfacasymp} converges to its left-hand side, and similarly, when $\Re(2\nu + \alpha + \beta + 1) > \Re(\alpha' + \beta')$ and $\left|\Re\left(\e^{-2\xi}\right)\right| < \frac{1}{2}$, the series in \eqref{Qfacasymp} converges to the left-hand side. Here, $\alpha'$ and $\beta'$ are defined in \eqref{abprime}. For a short proof, refer to the end of Section \ref{fproof1}.

The following theorem gives analogous factorial expansions for the Jacobi function $P_\nu ^{(\alpha ,\beta )} (x)$ and the associated functions as $\nu \to \infty$.

\begin{theorem}\label{thm4}
Suppose that ($0 <$) $\varepsilon  \leq  \zeta  \leq  \frac{\pi}{2} - \varepsilon$ ($< \frac{\pi}{2}$), and let $\alpha,\beta  \in \mathbb{C}$ be bounded. Define
\begin{gather}\label{zetadef2}
\begin{split}
& \zeta^{(3)}_{\nu ,n,\ell }  = (2\nu  + \alpha  + \beta  + n + 1)\zeta  - \left( \alpha  + \ell  + \tfrac{1}{2} \right)\tfrac{\pi }{2},\\ & \zeta^{(4)}_{\nu ,n,\ell }  = (2\nu  + \alpha  + \beta  + n + 1)\zeta  + \left( \alpha  - \ell  + \tfrac{1}{2} \right)\tfrac{\pi }{2},
\end{split}
\end{gather}
where $n$ and $\ell$ are non-negative integers. Then, the Jacobi function and the associated functions admit the following factorial expansions:
\begin{gather}\label{Pfacasymp2}
\begin{split}
& \frac{ \pi P_\nu ^{(\alpha ,\beta )} (\cos (2\zeta) )}{2^{2\nu  + \alpha  + \beta +1} B(\nu  + \alpha  + 1,\nu  + \beta  + 1)} \sim \\& 
\frac{1}{\sin ^{\alpha  + 1/2} \zeta \cos ^{\beta  + 1/2} \zeta }\sum_{n = 0}^\infty  ( - 1)^n \left(\sum\limits_{\ell = 0}^n \frac{a_\ell (\alpha) a_{n - \ell} (\beta)}{\sin^\ell \zeta \cos^{n- \ell} \zeta} \cos \left(\zeta^{(3)}_{\nu ,n,\ell }\right)\right)\frac{\Gamma (2\nu  + \alpha  + \beta  + 2)}{\Gamma (2\nu  + \alpha  + \beta  + n + 2)} ,
\end{split}
\end{gather}
\begin{gather}\label{Qfacasymp2}
\begin{split}
& \frac{ \SzQ_\nu ^{(\alpha ,\beta )} (\cos (2\zeta) )}{2^{2\nu  + \alpha  + \beta } B(\nu  + \alpha  + 1,\nu  + \beta  + 1)} \sim \\& 
\frac{1}{\sin ^{\alpha  + 1/2} \zeta \cos ^{\beta  + 1/2} \zeta }\sum_{n = 0}^\infty  ( - 1)^n \left(\sum\limits_{\ell = 0}^n \frac{a_\ell (\alpha) a_{n - \ell} (\beta)}{\sin^\ell \zeta \cos^{n- \ell} \zeta} \cos \left(\zeta^{(4)}_{\nu ,n,\ell }\right)\right)\frac{\Gamma (2\nu  + \alpha  + \beta  + 2)}{\Gamma (2\nu  + \alpha  + \beta  + n + 2)} ,
\end{split}
\end{gather}
\begin{gather}\label{Qfacasymp3}
\begin{split}
& \frac{ \DQ_\nu ^{(\alpha ,\beta )} (\cos (2\zeta) )}{2^{2\nu  + \alpha  + \beta } B(\nu  + \alpha  + 1,\nu  + \beta  + 1)} \sim \\& 
\frac{-1}{\sin ^{\alpha  + 1/2} \zeta \cos ^{\beta  + 1/2} \zeta }\sum_{n = 0}^\infty  ( - 1)^n \left(\sum\limits_{\ell = 0}^n \frac{a_\ell (\alpha) a_{n - \ell} (\beta)}{\sin^\ell \zeta \cos^{n- \ell} \zeta} \sin \left(\zeta^{(3)}_{\nu ,n,\ell }\right)\right)\frac{\Gamma (2\nu  + \alpha  + \beta  + 2)}{\Gamma (2\nu  + \alpha  + \beta  + n + 2)} ,
\end{split}
\end{gather}
as $\nu\to\infty$ in the sector $\left|\arg \nu \right|\le \pi-\delta$ ($<\pi$).
\end{theorem}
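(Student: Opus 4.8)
The plan is to derive Theorem~\ref{thm4} entirely from the factorial expansion \eqref{Qfacasymp} for $Q_\nu^{(\alpha,\beta)}(\cosh(2\xi))$ already established in Theorem~\ref{thm3}, by feeding it into the connection formulas \eqref{PQrel}, \eqref{Qfuncdef} and \eqref{Qfuncdef2} that express $P_\nu^{(\alpha,\beta)}(x)$, $\SzQ_\nu^{(\alpha,\beta)}(x)$ and $\DQ_\nu^{(\alpha,\beta)}(x)$ as boundary combinations of $Q_\nu^{(\alpha,\beta)}(x\pm\im\varepsilon)$ as $\varepsilon\to0^+$. Writing $x=\cos(2\zeta)$ with $\varepsilon\le\zeta\le\frac{\pi}{2}-\varepsilon$, the point $x+\im\varepsilon$ (resp.\ $x-\im\varepsilon$) corresponds under the biholomorphism $z=\cosh(2\xi)$ to a parameter $\xi\in\mathcal{D}$ tending to $\im\zeta$ (resp.\ $-\im\zeta$) as $\varepsilon\to0^+$, since $\cosh(2\im\zeta)=\cos(2\zeta)$ while $\Im(\cosh(2\xi))=\sinh(2\Re(\xi))\sin(2\Im(\xi))$ fixes the sign of the imaginary part. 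The constraint $\varepsilon\le\zeta\le\frac{\pi}{2}-\varepsilon$ is exactly what keeps $|\sinh(\im\zeta)|=\sin\zeta$ and $|\cosh(\im\zeta)|=\cos\zeta$ bounded away from zero, mirroring the hypothesis $|\sinh\xi|,|\cosh\xi|\ge\varepsilon$ of Theorem~\ref{thm3}; the sector $|\arg\nu|\le\pi-\delta$ and the gamma-quotient $\Gamma(2\nu+\alpha+\beta+2)/\Gamma(2\nu+\alpha+\beta+n+2)$, being common to every term, are inherited directly from \eqref{Qfacasymp}.

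First I would compute the boundary values of the prefactor, of $\e^{-2\nu\xi}$, and of the coefficients $g_n(-\xi,\alpha,\beta)$ in \eqref{Qfacasymp} as $\xi\to\pm\im\zeta$. Tracking the fractional powers by the stated continuity prescription (starting from positive real $\xi$ and following $\xi=u\pm\im\zeta$, $u\to0^+$) gives $\cosh\xi\to\cos\zeta>0$ while $\sinh\xi\to\sin\zeta\,\e^{\pm\im\pi/2}$, the argument reaching $\pm\frac{\pi}{2}$ through the first (resp.\ fourth) quadrant. Substituting these into the prefactor $(\e^{-\xi}/\sinh\xi)^{\alpha+1/2}(\e^{-\xi}/\cosh\xi)^{\beta+1/2}$, into $\e^{-2\nu\xi}$ and into $g_n(-\xi,\alpha,\beta)$, every contribution becomes a pure phase times a real magnitude. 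Collecting the $\zeta$-linear and the $\frac{\pi}{2}$-constant parts shows that the $(n,\ell)$ term of the normalised $Q_\nu^{(\alpha,\beta)}(x\pm\im\varepsilon)$ carries the phase $\e^{\mp\im\zeta^{(4)}_{\nu,n,\ell}}$, with the common real magnitude $(-1)^n a_\ell(\alpha)a_{n-\ell}(\beta)\sin^{-(\alpha+\ell+1/2)}\zeta\,\cos^{-(\beta+n-\ell+1/2)}\zeta$, where $\zeta^{(4)}_{\nu,n,\ell}$ is as in \eqref{zetadef2}.

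With the two boundary expansions in hand, I would assemble the three results using $\frac12(\e^{\im\theta}+\e^{-\im\theta})=\cos\theta$ and $\frac{1}{2\im}(\e^{\im\theta}-\e^{-\im\theta})=\sin\theta$. For $\SzQ$, formula \eqref{Qfuncdef} directly produces $\cos(\zeta^{(4)}_{\nu,n,\ell})$ and hence \eqref{Qfacasymp2}. For $\DQ$, the weights $\e^{\pm\im\pi\alpha}$ in \eqref{Qfuncdef2} give $\cos(\pi\alpha-\zeta^{(4)}_{\nu,n,\ell})$; using $\pi\alpha-\zeta^{(4)}_{\nu,n,\ell}=-\zeta^{(3)}_{\nu,n,\ell}-\frac{\pi}{2}$ this collapses to $-\sin(\zeta^{(3)}_{\nu,n,\ell})$, yielding \eqref{Qfacasymp3}. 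For $P$, formula \eqref{PQrel} contributes the difference together with the factor $\im/\pi$, producing $-\sin(\pi\alpha-\zeta^{(4)}_{\nu,n,\ell})=\cos(\zeta^{(3)}_{\nu,n,\ell})$ and hence \eqref{Pfacasymp2}; the extra factor $2^{2\nu+\alpha+\beta+1}$ in the normalisation of \eqref{Pfacasymp2} (against $2^{2\nu+\alpha+\beta}$ for the associated functions) is precisely what absorbs the $2$ arising from $\e^{\im\theta}-\e^{-\im\theta}=2\im\sin\theta$.

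The main obstacle I anticipate is the careful and consistent determination of the branches of the multivalued prefactor and of the coefficients $g_n(-\xi,\alpha,\beta)$ as $\xi$ reaches the boundary $\pm\im\zeta$ of $\mathcal{D}$: an error of $\pi$ in any argument would flip a sign and destroy the matching of the phases $\zeta^{(3)}_{\nu,n,\ell}$ and $\zeta^{(4)}_{\nu,n,\ell}$. Closely related is the justification that the relation \eqref{Qfacasymp}, valid for $\xi\in\mathcal{D}$, survives the passage to the boundary $\xi\to\pm\im\zeta$ (equivalently $\varepsilon\to0^+$); this rests on the uniformity of the remainder in \eqref{Qfacasymp} up to the imaginary axis away from $\zeta=0,\frac{\pi}{2}$, made explicit by the error estimates of Section~\ref{bounds}, which guarantees that the limit commutes with the asymptotic expansion. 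Once these two points are settled, the remainder is the routine trigonometric bookkeeping sketched above.
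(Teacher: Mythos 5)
Your overall route coincides with the paper's own proof of Theorem \ref{thm4}: the paper likewise obtains all three expansions by inserting the factorial expansion \eqref{Qfacasymp} of $Q_\nu^{(\alpha,\beta)}$ into the boundary-value formulas \eqref{PQrel2}--\eqref{Qrel4} and carrying out the trigonometric bookkeeping. Your bookkeeping is correct: the $(n,\ell)$ term of the normalised $Q_\nu^{(\alpha,\beta)}(\cos(2\zeta)\pm\im\varepsilon)$ does carry the phase $\e^{\mp\im\zeta^{(4)}_{\nu,n,\ell}}$, the identity $\pi\alpha-\zeta^{(4)}_{\nu,n,\ell}=-\zeta^{(3)}_{\nu,n,\ell}-\tfrac{\pi}{2}$ holds, and the factor $2$ from $\e^{\im\theta}-\e^{-\im\theta}=2\im\sin\theta$ is indeed absorbed by the extra power of $2$ in the normalisation of \eqref{Pfacasymp2}.

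The gap is in the step you yourself identify as the crux: justifying the passage $\xi\to\pm\im\zeta$. You anchor the required uniformity to the error estimates of Section \ref{bounds}, but those estimates (Theorem \ref{Qfacthm}) are proved only under the restrictions $|\Re(\alpha)|,|\Re(\beta)|<\tfrac12$ (or their one-sided variants) and under the hypothesis $\Re(2\nu+\alpha+\beta+1)>\Re(\alpha'+\beta')$, with $\alpha',\beta'$ as in \eqref{abprime}. That hypothesis fails for all large $|\nu|$ along any ray with $|\arg\nu|>\tfrac{\pi}{2}$, and the bounds compare $|\Gamma(2\nu+\alpha+\beta+2)|$ against $\Gamma(\Re(2\nu+\alpha+\beta)+N+2)$, so they certify the asymptotic character of the expansion essentially only as $\Re(\nu)\to+\infty$ with $\Im(\nu)$ fixed --- the paper says exactly this at the start of Section \ref{asympproofs}. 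Consequently, your argument yields Theorem \ref{thm4} only for restricted parameters and in a proper subregion of the sector $|\arg\nu|\le\pi-\delta$, not for arbitrary bounded complex $\alpha,\beta$ in the full sector as claimed (this restricted statement is, in effect, what the paper proves separately as Theorem \ref{cutinfacthm2} in Section \ref{fproof3}). The paper closes the gap differently: in proving Theorem \ref{thm3} it shows, via Watson's large-$\nu$ expansions of the underlying hypergeometric functions together with coefficient identification and an analytic-continuation/continuity argument, that the factorial expansion of $Q_\nu^{(\alpha,\beta)}(\cosh(2\xi))$ holds for $\xi$ in the closed domain $\overline{\mathcal{D}}$ of \eqref{Dbar} --- in particular at $\xi=\pm\im\zeta$ --- for all $\alpha,\beta\in\mathbb{C}$ and as $\nu\to\infty$ in the whole sector $|\arg\nu|\le\pi-\delta$; with that boundary version in hand, \eqref{PQrel2}--\eqref{Qrel4} apply directly and no limit-interchange argument is needed. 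To repair your proof you must either invoke this stronger form of Theorem \ref{thm3} (validity on $\overline{\mathcal{D}}$), or supply a uniformity-up-to-the-boundary argument that does not rest on the Section \ref{bounds} bounds.
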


The proof of Theorem \ref{thm4} can be found in Section \ref{asympproofs}. In the special case where $\alpha$ and $\beta$ are real and $\nu$ approaches infinity along positive real values, the factorial expansion \eqref{Pfacasymp2} was derived by Hahn \cite{Hahn1980}.

In the case that $\Re(2\nu + \alpha + \beta + 1) > \Re(\alpha' + \beta')$ and $\frac{\pi }{6} < \zeta  < \frac{\pi }{3}$, the infinite series on the right-hand sides of \eqref{Pfacasymp2}, \eqref{Qfacasymp2} and \eqref{Qfacasymp3} converge to the corresponding functions on the left-hand sides. For a short proof, refer to the end of Section \ref{fproof3}.

We note that if $2\alpha$ and $2\beta$ are odd integers, the factorial expansions given in Theorems \ref{thm3} and \ref{thm4} terminate, yielding exact representations of the corresponding functions (see Sections \ref{fproof1} and \ref{fproof3}).

The proofs of Theorems \ref{thm1}--\ref{thm4} rely on the connection between these functions and hypergeometric functions, Watson's large-parameter asymptotic expansions for hypergeometric functions, and our error-bound theorems presented in Section \ref{bounds}. 

To prove our error bounds, we first derive explicit integral representations for the remainder terms in the expansions given in Theorems \ref{thm1}--\ref{thm4}. These are based on suitable representations of the Jacobi functions. In the following, we briefly discuss such representations for the Jacobi function of the second kind, $Q_\nu^{(\alpha, \beta)}(z)$, and give a rough outline of how they are used in our analysis.

(i) To study the remainder term in the inverse factorial expansion \eqref{Qifacasymp} of the Jacobi function of the second kind, we shall use the following, less commonly employed representation involving the modified Bessel function of the second kind:
\begin{equation}\label{integralrep1}
\begin{split}
2^{2\nu + \alpha + \beta} & B(\nu + 1, \nu + \alpha + \beta + 1)  Q_\nu^{(\alpha, \beta)} (\cosh (2\xi)) = \frac{1}{\sinh^\alpha \xi \cosh^\beta \xi} \\ &\times \frac{1}{\Gamma (2\nu + \alpha + \beta + 2)}\int_0^{+\infty} t^{2\nu + \alpha + \beta + 1} K_\alpha (t \sinh \xi) K_\beta (t \cosh \xi) \id t ,
\end{split}
\end{equation}
which holds for $\xi  \in \mathcal{D}$ and $\Re (2\nu+\alpha+\beta+2) > \left|\Re(\alpha)\right|+\left|\Re(\beta)\right|$\footnote{Note that this condition ensures that none of $ \nu + 1 $, $ \nu + \alpha $, $ \nu + \beta $, or $ \nu + \alpha + \beta $ are negative integers, so the left-hand side of \eqref{integralrep1} is well-defined.} (combine \cite[Eq. (21)]{Cohl2023} and \cite[Eq. (6.8.47)]{Erdelyi1954}). An immediate consequence of the factor $t^{2\nu + \alpha + \beta + 1}$ in the integrand is that, when $\Re(\nu)$ is large and positive, the main contribution to the integral comes from large values of $t$. Thus, we can obtain a large-$\nu$ asymptotic expansion by using the well-known large-argument asymptotic expansion of the Bessel function. This approach also allows for the straightforward derivation of explicit and computable error bounds by employing the known error estimates for the large-argument expansion of the Bessel function. For the corresponding representation of $P_\nu^{(\alpha, \beta)} (\cosh (2\xi))$, see equation \eqref{eq9}.

(ii) To obtain an explicit remainder term for the factorial expansion \eqref{Qfacasymp} of the Jacobi function of the second kind, we shall employ the following alternative hypergeometric representation:
\begin{equation}\label{Qhypergeom}
Q_\nu ^{(\alpha ,\beta )} (\cosh (2\xi )) = \frac{2\Gamma (\nu  + \alpha  + 1)\Gamma (\nu  + \beta  + 1)}{\left( \sinh \xi \right)^{2\alpha } \left(\cosh \xi \right)^{2\nu  + 2\beta  + 2} }\mathbf{F}\!\left( \mytop{\nu  + 1,\nu  + \beta  + 1}{2\nu  + \alpha  + \beta  + 2};\frac{1}{\cosh ^2 \xi } \right),
\end{equation}
valid for $\xi \in \mathcal{D}$ and $\nu + \alpha, \nu + \beta \notin \mathbb{Z}^{-}$ (see \cite[Eq. (22)]{Cohl2023} with $z = \cosh(2\xi)$). Building on ideas due to Hahn \cite{Hahn1980}, we shall reformulate the hypergeometric function as a double integral, making it suitable for deriving a factorial expansion with an explicit remainder term.

The remainder of the paper is organised as follows. In Section \ref{bounds}, we provide error bounds for the inverse factorial and factorial expansions presented in Theorems \ref{thm1}–\ref{thm4}. Section \ref{asympproofs} contains the proofs of Theorems \ref{thm1}–\ref{thm4}. The proofs of the error bounds for the inverse factorial series are given in Sections \ref{infproof1} and \ref{infproof2}, while those for the factorial series are provided in Sections \ref{fproof1}--\ref{fproof3}. Section \ref{numerics} presents numerical examples that illustrate the sharpness of the bounds and the accuracy of the expansions.

\section{Error bounds}\label{bounds}

In this section, we provide several explicit and computable bounds for the remainder terms of the inverse factorial and factorial expansions stated in Theorems \ref{thm1}–\ref{thm4}, truncated after finitely many terms, under specific conditions on the degree $\nu$ and the parameters $\alpha$ and $\beta$.

In our first two theorems below, we provide computable error bounds for the inverse factorial expansions \eqref{Pifacasymp} and \eqref{Qifacasymp} of the Jacobi functions $P_\nu^{(\alpha ,\beta )}(z)$ and $Q_\nu^{(\alpha ,\beta )}(z)$, respectively. In Theorem \ref{Pinfacthm}, we introduce the notation
\begin{equation}\label{chidef}
\chi (p) = \sqrt \pi  \frac{\Gamma\left(\frac{p}{2} + 1\right)}{\Gamma \left(\frac{p}{2} + \frac{1}{2}\right)},\quad p>0.
\end{equation}
We remark that $\chi (p) \sim \sqrt{\frac{\pi}{2} p}$ as $p\to+\infty$ \cite[\href{http://dlmf.nist.gov/9.7.E4}{Eq. (9.7.4)}]{NIST:DLMF}. For the definition of the domain $\mathcal{D}$, see \eqref{Ddef}.

\begin{theorem}\label{Pinfacthm} Let $\xi\in \mathcal{D}$ and let $N$, $M$ both be arbitrary non-negative integers. Let $\nu$, $\alpha$, and $\beta$ be complex numbers satisfying $\Re(2\nu + \alpha + \beta + 1) > \max(N, M)$. Then we have
\begin{gather}\label{Pinfac}
\begin{split}
& 2^{2\nu + \alpha + \beta + 1} B(\nu+1,\nu + \alpha + \beta + 1)P_\nu^{(\alpha ,\beta )} (\cosh(2\xi) ) = \\ & \left( \frac{\e^{\xi } }{\sinh \xi} \right)^{\alpha + 1/2} \left( \frac{\e^{\xi } }{\cosh \xi} \right)^{\beta + 1/2} \e^{2\nu\xi } \left( \sum_{n = 0}^{N - 1} g_n ( -\xi,\alpha ,\beta )\frac{\Gamma (2\nu + \alpha + \beta  -n + 1)}{\Gamma (2\nu  + \alpha  + \beta  + 2)} + R_N^{(P1)} (\nu,\xi ,\alpha ,\beta ) \right)
\\ & - C(\xi ,\alpha )\left( \frac{\e^{ - \xi } }{\sinh \xi} \right)^{\alpha + 1/2} \left( \frac{\e^{ - \xi } }{\cosh \xi} \right)^{\beta + 1/2} \e^{ - 2\nu\xi } \left( \sum_{m = 0}^{M - 1} g_m ( \xi,\alpha ,\beta )\frac{\Gamma (2\nu + \alpha + \beta  -m + 1)}{\Gamma (2\nu  + \alpha  + \beta  + 2)} + R_M^{(P2)} (\nu,\xi ,\alpha ,\beta ) \right),
\end{split}
\end{gather}
where $C(\xi ,\alpha )$ is defined in \eqref{Cdef}, and the remainder terms satisfy the following estimates:
\begin{gather}\label{eq40}
\begin{split}
&\left|R_N^{(P1)} (\nu,\xi ,\alpha ,\beta )\right|  \le \left| \frac{\cos(\pi \alpha) \cos(\pi \beta)}{\cos(\pi \Re(\alpha)) \cos(\pi \Re(\beta))} a_N(\Re(\alpha)) \left( \frac{\e^{-\xi}}{\sinh \xi} \right)^N \right| \frac{\Gamma (\Re (2\nu  + \alpha  + \beta ) - N + 1)}{\left| \Gamma (2\nu  + \alpha  + \beta  + 2)\right|} \\ & \times \begin{cases} 1 & \text{if }\; \Re \left(\e^{2\xi } \right) \le 1, \\ 
\min\big(\left| 1 - \e^{ - 2\xi } \right|\left| \csc (2\Im(\xi) ) \right|,1 + \chi\big(N+\tfrac{1}{2}\big)\big) & \text{if }\; \Re \left(\e^{2\xi } \right) > 1  \end{cases} \\ &
+ \sum_{\ell = 0}^{N-1} \left| \frac{\cos(\pi \beta)}{\cos(\pi \Re(\beta))} a_\ell(\alpha) a_{N-\ell}(\Re(\beta)) \left( \frac{\e^{-\xi}}{\sinh \xi} \right)^\ell \left( \frac{\e^{-\xi}}{\cosh \xi} \right)^{N-\ell} \right| \frac{\Gamma (\Re (2\nu  + \alpha  + \beta ) - N + 1)}{\left| \Gamma (2\nu  + \alpha  + \beta  + 2)\right|},
\end{split}
\end{gather}
provided $\left|\Re(\alpha)\right|<N+ \frac{1}{2}$ and $\left|\Re(\beta)\right| < \frac{1}{2}$,
\begin{gather}\label{eq41}
\begin{split}
&\left|R_N^{(P1)} (\nu,\xi ,\alpha ,\beta )\right|  \le  \left(\left| \frac{\cos(\pi \alpha) \cos(\pi \beta)}{\cos(\pi \Re(\alpha)) \cos(\pi \Re(\beta))} a_N(\Re(\beta)) \left( \frac{\e^{-\xi}}{\cosh \xi} \right)^N \right|  \right. \\ &+ \left. \sum_{\ell = 1}^N \left| \frac{\cos(\pi \alpha)}{\cos(\pi \Re(\alpha))} a_\ell(\Re(\alpha)) a_{N-\ell}(\beta) \left( \frac{\e^{-\xi}}{\sinh \xi} \right)^\ell \left( \frac{\e^{-\xi}}{\cosh \xi} \right)^{N-\ell} \right| \right) \frac{\Gamma (\Re (2\nu  + \alpha  + \beta ) - N + 1)}{\left| \Gamma (2\nu  + \alpha  + \beta  + 2)\right|}
\\ & \times \begin{cases} 1 & \text{if }\; \Re \left(\e^{2\xi } \right)\le 1, \\ 
\min\big(\left| 1 - \e^{ - 2\xi } \right|\left| \csc (2\Im(\xi) ) \right|,1 + \chi\big(N+\tfrac{1}{2}\big)\big) & \text{if }\; \Re \left(\e^{2\xi } \right) > 1 , \end{cases}
\end{split}
\end{gather}
provided $\left|\Re(\alpha)\right|<\frac{1}{2}$ and $\left|\Re(\beta)\right| < N+ \frac{1}{2}$, and
\begin{gather}\label{eq44}
\begin{split}
&\left|R_N^{(P1)} (\nu,\xi ,\alpha ,\beta )\right|  \le \\ & \sum_{\ell = 0}^N \left| \frac{\cos(\pi \alpha) \cos(\pi \beta)}{\cos(\pi \Re(\alpha)) \cos(\pi \Re(\beta))} a_\ell(\Re(\alpha)) a_{N-\ell}(\Re(\beta)) \left( \frac{\e^{-\xi}}{\sinh \xi} \right)^\ell \left( \frac{\e^{-\xi}}{\cosh \xi} \right)^{N-\ell} \right|  \frac{\Gamma (\Re (2\nu  + \alpha  + \beta ) - N + 1)}{\left| \Gamma (2\nu  + \alpha  + \beta  + 2)\right|}
\\ & \times \begin{cases} 1 & \text{if }\; \Re \left(\e^{2\xi } \right) \le 1, \\ 
\min\big(\left| 1 - \e^{ - 2\xi } \right|\left| \csc (2\Im(\xi) ) \right|,1 + \chi\big(N+\tfrac{1}{2}\big)\big) & \text{if }\; \Re \left(\e^{2\xi } \right) > 1 , \end{cases}
\end{split}
\end{gather}
provided $\left|\Re(\alpha)\right|, \left|\Re(\beta)\right| <\frac{1}{2}$. Similarly,
\begin{gather}\label{eq42}
\begin{split}
&\left|R_M^{(P2)} (\nu,\xi ,\alpha ,\beta )\right|  \le \left( \left| \frac{\cos(\pi \alpha) \cos(\pi \beta)}{\cos(\pi \Re(\alpha)) \cos(\pi \Re(\beta))} a_M(\Re(\alpha)) \left( \frac{\e^{\xi}}{\sinh \xi} \right)^M \right| \right. \\ &
+\left. \sum_{\ell = 0}^{M-1} \left| \frac{\cos(\pi \beta)}{\cos(\pi \Re(\beta))} a_\ell(\alpha) a_{M-\ell}(\Re(\beta)) \left( \frac{\e^{\xi}}{\sinh \xi} \right)^\ell \left( \frac{\e^{\xi}}{\cosh \xi} \right)^{M-\ell} \right| \right)\frac{\Gamma (\Re (2\nu  + \alpha  + \beta ) - M + 1)}{\left| \Gamma (2\nu  + \alpha  + \beta  + 2)\right|},
\end{split}
\end{gather}
provided $\left|\Re(\alpha)\right|<M+ \frac{1}{2}$ and $\left|\Re(\beta)\right| < \frac{1}{2}$,
\begin{gather}\label{eq43}
\begin{split}
&\left|R_M^{(P2)} (\nu,\xi ,\alpha ,\beta )\right|  \le  \left(\left| \frac{\cos(\pi \alpha) \cos(\pi \beta)}{\cos(\pi \Re(\alpha)) \cos(\pi \Re(\beta))} a_M(\Re(\beta)) \left( \frac{\e^{\xi}}{\cosh \xi} \right)^M \right| \right. \\ &+ \left.\sum_{\ell = 1}^M \left| \frac{\cos(\pi \alpha)}{\cos(\pi \Re(\alpha))} a_\ell(\Re(\alpha)) a_{M-\ell}(\beta) \left( \frac{\e^{\xi}}{\sinh \xi} \right)^\ell \left( \frac{\e^{\xi}}{\cosh \xi} \right)^{M-\ell} \right|\right) \frac{\Gamma (\Re (2\nu  + \alpha  + \beta ) - M + 1)}{\left| \Gamma (2\nu  + \alpha  + \beta  + 2)\right|},
\end{split}
\end{gather}
provided $\left|\Re(\alpha)\right|<\frac{1}{2}$ and $\left|\Re(\beta)\right| < M+ \frac{1}{2}$, and
\begin{gather}\label{eq45}
\begin{split}
&\left|R_M^{(P2)} (\nu,\xi ,\alpha ,\beta )\right|  \le \\ & \sum_{\ell = 0}^M \left| \frac{\cos(\pi \alpha) \cos(\pi \beta)}{\cos(\pi \Re(\alpha)) \cos(\pi \Re(\beta))} a_\ell(\Re(\alpha)) a_{M-\ell}(\Re(\beta)) \left( \frac{\e^{\xi}}{\sinh \xi} \right)^\ell \left( \frac{\e^{\xi}}{\cosh \xi} \right)^{M-\ell} \right| \frac{\Gamma (\Re (2\nu  + \alpha  + \beta ) - M + 1)}{\left| \Gamma (2\nu  + \alpha  + \beta  + 2)\right|},
\end{split}
\end{gather}
provided $\left|\Re(\alpha)\right|, \left|\Re(\beta)\right| < \frac{1}{2}$. If $2\Re(\alpha)$ or $2\Re(\beta)$ is an odd integer, then the limiting values must to be taken in these bounds. The fractional powers are defined to be positive for positive real $\xi$ and are defined by continuity elsewhere. Furthermore, the remainder term $R_M^{(P2)} (\nu,\xi ,\alpha ,\beta )$ is bounded in absolute value by the corresponding first neglected term and retains the same sign, provided that $\xi$ is positive, $\nu$, $\alpha$, and $\beta$ are real, $|\alpha|, |\beta| < \frac{1}{2}$, and $2\nu + \alpha + \beta + 1 > M$.
\end{theorem}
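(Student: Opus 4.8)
The plan is to begin from the integral representation \eqref{eq9} for $P_\nu^{(\alpha,\beta)}(\cosh(2\xi))$, the analogue of \eqref{integralrep1} in which the factor $K_\alpha(t\sinh\xi)$ is replaced by $I_\alpha(t\sinh\xi)$, and to substitute into it the large-argument asymptotic expansions of the two Bessel factors, each truncated after finitely many terms and carried with its explicit remainder. For $K_\beta(t\cosh\xi)$ I would use the standard expansion with coefficients $a_k(\beta)$, and for $I_\alpha(t\sinh\xi)$ its two-exponential form, whose dominant part has coefficients $(-1)^\ell a_\ell(\alpha)$ and whose recessive part carries both the constant that produces $C(\xi,\alpha)$ and the coefficients $a_\ell(\alpha)$. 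Multiplying the truncated expansions, collecting the products through the Cauchy convolution $\sum_{\ell} a_\ell(\alpha) a_{n-\ell}(\beta)(\cdots)$ that defines $g_n(\mp\xi,\alpha,\beta)$, and integrating term by term via $\int_0^{+\infty} t^{2\nu+\alpha+\beta-n}\e^{-ct}\id t = \Gamma(2\nu+\alpha+\beta-n+1)c^{-(2\nu+\alpha+\beta-n+1)}$ reproduce, after the prefactor $1/\Gamma(2\nu+\alpha+\beta+2)$ in \eqref{eq9}, the two finite inverse-factorial sums in \eqref{Pinfac}. The factor $\e^{2\nu\xi}$ with the powers of $\e^{-\xi}/\sinh\xi$ and $\e^{-\xi}/\cosh\xi$ comes from the combined exponential rate $c=\e^{-\xi}$ of the dominant contribution, while $\e^{-2\nu\xi}$ with the powers of $\e^{\xi}/\sinh\xi$ and $\e^{\xi}/\cosh\xi$ comes from the rate $c=\e^{\xi}$ of the recessive one.

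The remainders $R_N^{(P1)}$ and $R_M^{(P2)}$ then appear as integrals of the cross terms, namely a truncated series of one Bessel factor times the remainder of the other, together with the product of the two remainders. I would estimate these using the classical error bounds for the large-argument expansions of $I_\mu$ and $K_\mu$ (cf.\ \cite[\S10.40]{NIST:DLMF}). The ratio $\cos(\pi\mu)/\cos(\pi\Re(\mu))$ appearing throughout \eqref{eq40}--\eqref{eq45} is precisely the factor by which the error bound for a complex order $\mu$ exceeds the one obtained for its real part, while the hypotheses $|\Re(\alpha)|,|\Re(\beta)|<\tfrac12$ (resp.\ $<N+\tfrac12$) are what render the relevant expansion enveloping, so that its remainder is controlled by the modulus of the first neglected term. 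The three variants \eqref{eq40}, \eqref{eq41}, \eqref{eq44} (and likewise \eqref{eq42}, \eqref{eq43}, \eqref{eq45}) correspond to the admissible choices of which factor is expanded to full order $N$ (resp.\ $M$) and which is truncated earlier, a choice dictated by which of $|\Re(\alpha)|$, $|\Re(\beta)|$ is permitted to exceed $\tfrac12$.

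The main obstacle is estimating the cross-term integral in the regime where the two exponential rates nearly cancel. After inserting the Bessel bounds, the surviving integrand is of the form $t^{\Re(2\nu+\alpha+\beta)-N}$ times an exponential whose decay rate has real part governed by $\Re(\e^{-2\xi})$; when $\Re(\e^{2\xi})>1$ this rate may be small or nearly oscillatory, so a direct $\Gamma$-evaluation is unavailable. This is the origin of the dichotomy on $\Re(\e^{2\xi})\lessgtr 1$ and of the two competing bounds inside the $\min$. I would derive $|1-\e^{-2\xi}||\csc(2\Im(\xi))|$ by integrating the geometric-type tail exactly, the factor $\csc(2\Im(\xi))$ measuring proximity to the real axis where the cancellation is worst, and derive $1+\chi(N+\tfrac12)$, with $\chi$ as in \eqref{chidef}, from a beta-integral (Cauchy--Schwarz) estimate of the same integral; the stated factor is the minimum of the two. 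Throughout, the hypothesis $\Re(2\nu+\alpha+\beta+1)>\max(N,M)$ guarantees absolute convergence of every integral at both endpoints and produces the common factor $\Gamma(\Re(2\nu+\alpha+\beta)-N+1)/|\Gamma(2\nu+\alpha+\beta+2)|$.

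Finally, for the sign-and-first-neglected-term claim for $R_M^{(P2)}$ when $\xi>0$ and $\nu,\alpha,\beta$ are real with $|\alpha|,|\beta|<\tfrac12$, I would note that here $C(\xi,\alpha)=\sin(\pi\alpha)$ and that the recessive part of $I_\alpha$ is governed, through the connection formula $I_\alpha-I_{-\alpha}=-\tfrac{2}{\pi}\sin(\pi\alpha)K_\alpha$, by $K_\alpha$. Thus $R_M^{(P2)}$ can be written as a single integral over $(0,+\infty)$ of a strictly positive weight times the exact remainder of the $M$-term expansion of the product $K_\alpha(t\sinh\xi)K_\beta(t\cosh\xi)$ (after removing the elementary prefactors). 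Since $t\sinh\xi$ and $t\cosh\xi$ are positive and the orders lie in $(-\tfrac12,\tfrac12)$, each individual Bessel remainder admits a Laplace-type integral representation with a kernel of one sign, whence the product remainder keeps the constant sign of its first neglected term for all $t\in(0,+\infty)$ and is dominated by it in modulus. Because $(-1)^M a_\ell(\alpha)a_{M-\ell}(\beta)>0$ for $|\alpha|,|\beta|<\tfrac12$, this first neglected term equals $(-1)^M$ times the positive quantity $|g_M(\xi,\alpha,\beta)|\,\Gamma(2\nu+\alpha+\beta-M+1)/\Gamma(2\nu+\alpha+\beta+2)$; integrating the enveloping inequality against the positive weight then shows that $R_M^{(P2)}$ lies between $0$ and this first neglected term, so it shares that term's sign and is bounded by it in absolute value. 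The delicate point is the uniform sign of the product remainder, and it is exactly here that the restriction $|\alpha|,|\beta|<\tfrac12$ is indispensable.
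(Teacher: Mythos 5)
Your overall route is the paper's own: you start from the same representation \eqref{eq9}, split $I_\alpha$ into dominant and recessive parts (the source of $C(\xi,\alpha)$), multiply truncated Bessel expansions to obtain the convolution coefficients $g_n$, decompose the remainder into cross terms whose admissible groupings give the three variants of each bound, and your argument for the sign and first-neglected-term property of $R_M^{(P2)}$ in the real case is in substance identical to the paper's (enveloping $K$-remainders at positive arguments, the sign pattern $(-1)^M$ of $a_\ell(\alpha)a_{M-\ell}(\beta)$, integration against a positive weight).

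The genuine gap is in how you propose to estimate the remainder integrals, which is exactly where the case-dependent factors in \eqref{eq40}, \eqref{eq41} and \eqref{eq44} originate. You keep the integration variable $t$ on the positive real axis and bound the integrand there. But then the dominant exponential satisfies $\left|\exp\left(-t\e^{-\xi}\right)\right| = \exp\left(-t\e^{-\Re(\xi)}\cos(\Im(\xi))\right)$, and the Gamma-evaluation of the majorant carries the factor $\left(\e^{-\Re(\xi)}\cos(\Im(\xi))\right)^{-(\Re(2\nu+\alpha+\beta)-N+1)}$; combined with the prefactor $\left|\e^{-(2\nu+\alpha+\beta+1)\xi}\right|$ this leaves, up to bounded quantities, the spurious factor $\e^{\Im(2\nu+\alpha+\beta)\Im(\xi)}\left(\sec(\Im(\xi))\right)^{\Re(2\nu+\alpha+\beta)-N+1}$, which grows exponentially in both $\Re(\nu)$ and $\left|\Im(\nu)\right|$ whenever $\Im(\xi)\neq 0$. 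No bound with the clean ratio $\Gamma(\Re(2\nu+\alpha+\beta)-N+1)/\left|\Gamma(2\nu+\alpha+\beta+2)\right|$ can be reached this way. The step your plan omits is the paper's substitution $t = s\e^{\pm\xi}$ followed by deformation of the contour back onto the positive real axis (see \eqref{RP1int} and \eqref{RP2int}); after it the outer integral is exactly $\Gamma(\Re(2\nu+\alpha+\beta)-N+1)$, and all the difficulty is transferred to bounding the Bessel remainders at the complex arguments $s\left(\e^{2\xi}\mp1\right)/2$.

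Consequently, your account of the dichotomy $\Re\left(\e^{2\xi}\right)\lessgtr 1$ and of the factor $\min\bigl(\left|1-\e^{-2\xi}\right|\left|\csc(2\Im(\xi))\right|,1+\chi\bigl(N+\tfrac{1}{2}\bigr)\bigr)$ is not tenable: these factors are not produced by the outer integral at all, and cannot be derived from it by geometric-tail or Cauchy--Schwarz estimates. They are inherited, pointwise in $s$, from the error bound for the $K$-Bessel remainder at a rotated argument (Lemma \ref{lemma1}, from \cite{Nemes17}). Indeed, the dominant part of $I_\alpha(w)$ equals $\mp\tfrac{\im}{\pi}K_\alpha\left(w\e^{\mp\pi\im}\right)$, so its remainder is $r_N^{(K)}$ evaluated at $s\left(\e^{2\xi}-1\right)\e^{\mp\pi\im}/2$; the clean sector $\left|\arg\right|\le\tfrac{\pi}{2}$ of Lemma \ref{lemma1} translates precisely into $\Re\left(\e^{2\xi}\right)\le1$, and the $\csc$ and $1+\chi$ alternatives are what the lemma gives in the complementary sector, with $\left|\csc\left(\arg\left(\e^{2\xi}-1\right)\right)\right| = \left|1-\e^{-2\xi}\right|\left|\csc(2\Im(\xi))\right|$. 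Your heuristic is moreover backwards: $\Re\left(\e^{2\xi}\right)\le 1$ is the regime in which, along the deformed contour, the recessive exponential $\e^{-s(\e^{2\xi}-1)}$ fails to decay relative to the dominant one, yet that is the case with factor $1$; the extra factor appears when $\Re\left(\e^{2\xi}\right)>1$. Finally, the index $N+\tfrac{1}{2}$ in $\chi\bigl(N+\tfrac{1}{2}\bigr)$ is the truncation order of the Bessel expansion, which no estimate of the $\nu$-dependent outer integral could produce. The tools you cite (complex-order error bounds for $I_\mu$ and $K_\mu$) do suffice, but only if applied at the rotated arguments after the contour deformation; the derivation you actually describe would fail.
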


\begin{theorem}\label{Qinfacthm} Let $\xi\in \mathcal{D}$ and let $N$ be an arbitrary non-negative integer. Let $\nu$, $\alpha$, and $\beta$ be complex numbers satisfying $\Re(2\nu + \alpha + \beta + 1) > N$. Then we have
\begin{gather}\label{Qinfac}
\begin{split}
&\frac{2^{2\nu + \alpha + \beta + 1}}{\pi} B(\nu + 1, \nu + \alpha + \beta + 1) Q_\nu^{(\alpha, \beta)}(\cosh(2\xi)) =\\ &\left( \frac{\e^{-\xi}}{\sinh \xi} \right)^{\alpha + 1/2} \left( \frac{\e^{-\xi}}{\cosh \xi} \right)^{\beta + 1/2} \e^{-2\nu \xi} \left( \sum_{n = 0}^{N - 1} g_n(\xi, \alpha, \beta) \frac{\Gamma(2\nu + \alpha + \beta - n + 1)}{\Gamma(2\nu + \alpha + \beta + 2)} + R_N^{(Q)}(\nu, \xi, \alpha, \beta) \right),
\end{split}
\end{gather}
where the remainder term satisfies the following estimates:
\begin{gather}\label{boundnum}
\begin{split}
&\left|R_N^{(Q)} (\nu,\xi ,\alpha ,\beta )\right|  \le  \left(\left| \frac{\cos(\pi \alpha) \cos(\pi \beta)}{\cos(\pi \Re(\alpha)) \cos(\pi \Re(\beta))} a_N(\Re(\alpha)) \left( \frac{\e^{\xi}}{\sinh \xi} \right)^N \right| \right. \\ &
+\left. \sum_{\ell = 0}^{N-1} \left| \frac{\cos(\pi \beta)}{\cos(\pi \Re(\beta))} a_\ell(\alpha) a_{N-\ell}(\Re(\beta)) \left( \frac{\e^{\xi}}{\sinh \xi} \right)^\ell \left( \frac{\e^{\xi}}{\cosh \xi} \right)^{N-\ell} \right|\right)\frac{\Gamma (\Re (2\nu  + \alpha  + \beta ) -N + 1)}{\left| \Gamma (2\nu  + \alpha  + \beta  + 2)\right|},
\end{split}
\end{gather}
provided $\left|\Re(\alpha)\right|<N+ \frac{1}{2}$ and $\left|\Re(\beta)\right| < \frac{1}{2}$,
\begin{align*}
&\left|R_N^{(Q)} (\nu,\xi ,\alpha ,\beta )\right|  \le  \left(\left| \frac{\cos(\pi \alpha) \cos(\pi \beta)}{\cos(\pi \Re(\alpha)) \cos(\pi \Re(\beta))} a_N(\Re(\beta)) \left( \frac{\e^{\xi}}{\cosh \xi} \right)^N \right| \right. \\ &+\left. \sum_{\ell = 1}^N \left| \frac{\cos(\pi \alpha)}{\cos(\pi \Re(\alpha))} a_\ell(\Re(\alpha)) a_{N-\ell}(\beta) \left( \frac{\e^{\xi}}{\sinh \xi} \right)^\ell \left( \frac{\e^{\xi}}{\cosh \xi} \right)^{N-\ell} \right|\right) \frac{\Gamma (\Re (2\nu  + \alpha  + \beta ) - N + 1)}{\left| \Gamma (2\nu  + \alpha  + \beta  + 2)\right|},
\end{align*}
provided $\left|\Re(\alpha)\right|<\frac{1}{2}$ and $\left|\Re(\beta)\right| < N+ \frac{1}{2}$, and
\begin{align*}
&\left|R_N^{(Q)} (\nu,\xi ,\alpha ,\beta )\right|  \le \\ & \sum_{\ell = 0}^N \left| \frac{\cos(\pi \alpha) \cos(\pi \beta)}{\cos(\pi \Re(\alpha)) \cos(\pi \Re(\beta))} a_\ell(\Re(\alpha)) a_{N-\ell}(\Re(\beta)) \left( \frac{\e^{\xi}}{\sinh \xi} \right)^\ell \left( \frac{\e^{\xi}}{\cosh \xi} \right)^{N-\ell} \right| \frac{\Gamma (\Re (2\nu  + \alpha  + \beta ) - N + 1)}{\left| \Gamma (2\nu  + \alpha  + \beta  + 2)\right|},
\end{align*}
provided $\left|\Re(\alpha)\right|, \left|\Re(\beta)\right| < \frac{1}{2}$. If $2\Re(\alpha)$ or $2\Re(\beta)$ is an odd integer, then the limiting values must to be taken in these bounds. The fractional powers are defined to be positive for positive real $\xi$ and are defined by continuity elsewhere. Furthermore, the remainder term $R_N^{(Q)} (\nu,\xi ,\alpha ,\beta )$ is bounded in absolute value by the corresponding first neglected term and retains the same sign, provided that $\xi$ is positive, $\nu$, $\alpha$, and $\beta$ are real, $|\alpha|, |\beta| < \frac{1}{2}$, and $2\nu + \alpha + \beta + 1 > N$.
\end{theorem}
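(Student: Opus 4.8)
The proof rests on the integral representation \eqref{integralrep1}, which renders the left-hand side of \eqref{Qinfac} as a Mellin-type integral of the product $K_\alpha(t\sinh\xi)K_\beta(t\cosh\xi)$ against $t^{2\nu+\alpha+\beta+1}$. The plan is to feed into it the large-argument expansion of the modified Bessel function of the second kind,
\[
K_\mu(z) = \sqrt{\tfrac{\pi}{2z}}\,\e^{-z}\left(\sum_{n=0}^{N-1}\frac{a_n(\mu)}{z^n} + R_N^{(K)}(\mu,z)\right),
\]
whose coefficients are exactly the $a_n(\mu)$ of \eqref{andef} — this is precisely why that normalisation was chosen. First I would record an \emph{exact} integral representation of $R_N^{(K)}(\mu,z)$ together with the classical estimate by the first neglected term, valid for $|\arg z|<\tfrac{\pi}{2}$ and $|\Re(\mu)|<N+\tfrac12$. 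Since $\xi\in\mathcal{D}$ forces $\arg(\sinh\xi),\arg(\cosh\xi)\in(-\tfrac{\pi}{2},\tfrac{\pi}{2})$, the Bessel arguments $t\sinh\xi$ and $t\cosh\xi$ (with $t>0$) remain in this half-plane; this is exactly why the bound for $Q_\nu^{(\alpha,\beta)}$ is free of the $\csc(2\Im(\xi))$/$\chi$ corrections present in Theorem \ref{Pinfacthm}.

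Next I would substitute both Bessel expansions into \eqref{integralrep1} and multiply out. Using $\sinh\xi+\cosh\xi=\e^{\xi}$, the two leading exponentials combine into $\e^{-t\e^{\xi}}$, while the Cauchy product of the coefficient series reproduces $g_n(\xi,\alpha,\beta)$. Integrating term by term via the exact identity $\int_0^{+\infty}t^{s-1}\e^{-t\e^{\xi}}\id t=\Gamma(s)\e^{-s\xi}$ (valid since $\Re(\e^{\xi})>0$ on $\mathcal{D}$) matches the finite sum in \eqref{Qinfac} and pins down $R_N^{(Q)}$ as an explicit integral against the remainder part of the product. The decisive device is then the substitution $s=t\e^{\xi}$, which rotates the contour onto the positive real $s$-axis and disposes of the dominant exponential \emph{exactly}; what survives is a genuine Laplace–Gamma integral $\int_0^{+\infty}s^{2\nu+\alpha+\beta}\e^{-s}(\cdots)\,\d s$, whose modulus is governed by $\Gamma(\Re(2\nu+\alpha+\beta)-N+1)$ (finite because $\Re(2\nu+\alpha+\beta+1)>N$), with the algebraic factors $\e^{-\xi}\sinh\xi=\tfrac12(1-\e^{-2\xi})$ and $\e^{-\xi}\cosh\xi=\tfrac12(1+\e^{-2\xi})$ supplying exactly the powers $|\e^{\xi}/\sinh\xi|$ and $|\e^{\xi}/\cosh\xi|$ seen in the bounds. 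No $\Re(\e^{\xi})$-factor survives, and $|\Gamma(z)|\le\Gamma(\Re(z))$ delivers the clean quotient $\Gamma(\Re(2\nu+\alpha+\beta)-N+1)/|\Gamma(2\nu+\alpha+\beta+2)|$.

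For the bounds themselves I would decompose the product remainder according to how the truncation is apportioned between the two factors, writing it as $\sum_{\ell=0}^{N-1}\frac{a_\ell(\alpha)}{(t\sinh\xi)^\ell}R_{N-\ell}^{(K)}(\beta,t\cosh\xi)+R_N^{(K)}(\alpha,t\sinh\xi)G_\beta(t\cosh\xi)$, where $G_\beta$ is the full normalised $\beta$-factor. The first summand, estimated by the order-$(N-\ell)$ Bessel remainder of the $\beta$-factor, produces the sum over $\ell$ in \eqref{boundnum}; the second summand is the order-$N$ Bessel remainder of the $\alpha$-factor times $G_\beta=R_0^{(K)}(\beta,\cdot)$, whose $N=0$ control yields the single $a_N(\Re(\alpha))$-term with both cosine factors. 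This also explains the three regimes: $|\Re(\alpha)|<N+\tfrac12$ licenses the order-$N$ remainder of the $\alpha$-factor, while $|\Re(\beta)|<\tfrac12$ is forced by the $N=0$ bound on $G_\beta$; the symmetric choice and the fully symmetric split give the other two cases. The passage from complex to real order — replacing $a_n(\mu)$ by $a_n(\Re(\mu))$ at the cost of $|\cos(\pi\mu)/\cos(\pi\Re(\mu))|$ — rests on the product formula $\cos(\pi\mu)=\prod_{k\ge0}\bigl(1-\mu^2/(k+\tfrac12)^2\bigr)$, which yields $|a_n(\mu)|\le|\cos(\pi\mu)/\cos(\pi\Re(\mu))|\,a_n(\Re(\mu))$ when $|\Re(\mu)|<\tfrac12$, accounting for both the cosine quotients and the threshold $\tfrac12$. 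Finally, the sign and first-neglected-term statement for real $\xi,\nu,\alpha,\beta$ with $|\alpha|,|\beta|<\tfrac12$ and $2\nu+\alpha+\beta+1>N$ follows from the sign-definiteness of the exact remainder integrand in this all-real regime.

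I expect the main obstacle to be the bookkeeping of the product remainder: producing an exact integral representation of $R_N^{(Q)}$ in which the dominant exponential has been integrated out by the rotation $s=t\e^{\xi}$, and then checking that the decomposition above reassembles into \emph{exactly} the three stated sums — with the correct $\e^{\xi}/\sinh\xi$ and $\e^{\xi}/\cosh\xi$ powers and no stray $\Re(\e^{\xi})$-factor. The delicate point is selecting, in each regime, which factor to expand fully and which to truncate at order $N$, so that every Bessel remainder estimate invoked is licensed by its order condition ($|\Re(\mu)|<N+\tfrac12$ or $<\tfrac12$) while the surviving integral still collapses to the single $\Gamma$-ratio.
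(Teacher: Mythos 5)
Your proposal follows essentially the same route as the paper's own proof: the representation \eqref{integralrep1}, the large-argument expansion of $K_\mu$ with its known error bounds (the paper's Lemma \ref{lemma1}), the two asymmetric decompositions of the product remainder (the paper's \eqref{eq22}--\eqref{eq23}), the rotation $s = t\e^{\xi}$ back to a clean Laplace--Gamma integral, the coefficient inequality $|a_n(\mu)| \le \left|\cos(\pi\mu)/\cos(\pi\Re(\mu))\right| a_n(\Re(\mu))$, and the sign-definiteness argument in the all-real case. The only slight imprecision is that the absence of the $\csc$/$\chi$ factors is governed by the post-rotation arguments $\arg\bigl(\e^{-\xi}\sinh\xi\bigr)$ and $\arg\bigl(\e^{-\xi}\cosh\xi\bigr)$ (i.e.\ of $\tfrac12\bigl(1\mp\e^{-2\xi}\bigr)$) rather than by $\arg(\sinh\xi)$, $\arg(\cosh\xi)$ themselves, but since all of these lie in the right half-plane for $\xi\in\mathcal{D}$ your conclusion is unaffected.
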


The proofs of Theorems \ref{Pinfacthm} and \ref{Qinfacthm} are given in Section \ref{infproof1}.

The following theorem provides computable error bounds for the inverse factorial expansions \eqref{invfexp1}, \eqref{invfexp2}, and \eqref{invfexp3} of the Jacobi function $P_\nu^{(\alpha, \beta)}(x)$ and the associated functions $\SzQ_\nu^{(\alpha, \beta)}(x)$ and $\DQ_\nu^{(\alpha, \beta)}(x)$.

\begin{theorem}\label{cutinfacthm} Let $0 < \zeta  < \frac{\pi}{2}$ and let $N$ be an arbitrary non-negative integer. Let $\nu$, $\alpha$, and $\beta$ be complex numbers satisfying $\Re(2\nu + \alpha + \beta + 1) > N$. Then we have
\begin{gather}\label{Pinfac2}
\begin{split}
& 2^{2\nu + \alpha + \beta} B(\nu + 1,\nu + \alpha + \beta + 1)P_\nu^{(\alpha, \beta)} (\cos(2\zeta)) = \frac{1}{\sin^{\alpha + 1/2} \zeta \cos^{\beta + 1/2} \zeta} \\ & \times \left( \sum\limits_{n = 0}^{N - 1} \left(\sum\limits_{\ell = 0}^n \frac{a_\ell (\alpha) a_{n - \ell} (\beta)}{\sin^\ell \zeta \cos^{n- \ell} \zeta} \cos \left(\zeta^{(1)}_{\nu,n,\ell}\right)\right) \frac{\Gamma(2\nu + \alpha + \beta - n + 1)}{\Gamma(2\nu+\alpha + \beta+2)} + R_N^{(P)} (\nu,\zeta, \alpha, \beta) \right),
\end{split}
\end{gather}
\begin{gather}\label{Qinfac2}
\begin{split}
& \frac{2^{2\nu + \alpha + \beta + 1}}{\pi} B(\nu + 1, \nu + \alpha + \beta + 1) \SzQ_\nu^{(\alpha, \beta)} (\cos(2\zeta)) = \frac{1}{\sin^{\alpha + 1/2} \zeta \cos^{\beta + 1/2} \zeta} \\ & \times \left( \sum_{n = 0}^{N - 1} \left(\sum\limits_{\ell = 0}^n \frac{a_\ell (\alpha) a_{n - \ell} (\beta)}{\sin^\ell \zeta \cos^{n- \ell} \zeta} \cos \left(\zeta^{(2)}_{\nu,n,\ell}\right)\right) \frac{\Gamma(2\nu + \alpha + \beta - n + 1)}{\Gamma(2\nu + \alpha + \beta + 2)} + R_N^{(\SzQ)}(\nu, \zeta, \alpha, \beta) \right),
\end{split}
\end{gather}
\begin{gather}\label{Qinfac3}
\begin{split}
& \frac{2^{2\nu + \alpha + \beta + 1}}{\pi} B(\nu + 1, \nu + \alpha + \beta + 1) \DQ_\nu^{(\alpha, \beta)} (\cos(2\zeta)) = \frac{-1}{\sin^{\alpha + 1/2} \zeta \cos^{\beta + 1/2} \zeta}\\ &  \times \left( \sum_{n = 0}^{N - 1} \left(\sum\limits_{\ell = 0}^n \frac{a_\ell (\alpha) a_{n - \ell} (\beta)}{\sin^\ell \zeta \cos^{n- \ell} \zeta} \sin \left(\zeta^{(1)}_{\nu,n,\ell}\right)\right) \frac{\Gamma(2\nu + \alpha + \beta - n + 1)}{\Gamma(2\nu + \alpha + \beta + 2)} + R_N^{(\DQ)}(\nu, \zeta, \alpha, \beta) \right),
\end{split}
\end{gather}
where $\zeta^{(1)}_{\nu,n,\ell}$ and $\zeta^{(2)}_{\nu,n,\ell}$ are defined in \eqref{zetadef1}, and the remainder terms satisfy the following estimates:
\begin{gather}\label{cutbound1}
\begin{split}
& \left| R_N^{(P)} (\nu, \zeta, \alpha, \beta) \right|, \left|R_N^{(\DQ)}(\nu, \zeta, \alpha, \beta)\right| \le \left( \left| \frac{\cos (\pi \alpha) \cos (\pi \beta)}{\cos (\pi \Re(\alpha)) \cos (\pi \Re(\beta))} \frac{a_N (\Re(\alpha))}{\sin^N \zeta} \right| \cosh \left(\Im\left(\zeta^{(1)}_{\nu, N, N}\right)\right) \right. \\ &+\left. \sum_{\ell = 0}^{N - 1} \left| \frac{\cos (\pi \beta)}{\cos (\pi \Re(\beta))} \frac{a_\ell (\alpha) a_{N - \ell} (\Re(\beta))}{\sin^\ell \zeta \cos^{N - \ell} \zeta} \right| \cosh \left(\Im\left(\zeta^{(1)}_{\nu, N, \ell}\right)\right) \right) \frac{\Gamma (\Re(2\nu + \alpha + \beta) - N + 1)}{\left| \Gamma (2\nu + \alpha + \beta + 2) \right|},
\end{split}
\end{gather}
provided $\left|\Re(\alpha)\right|<N+ \frac{1}{2}$ and $\left|\Re(\beta)\right| < \frac{1}{2}$,
\begin{gather}\label{cutbound2}
\begin{split}
& \left| R_N^{(P)} (\nu, \zeta, \alpha, \beta) \right|, \left|R_N^{(\DQ)}(\nu, \zeta, \alpha, \beta)\right| \le \left( \left| \frac{\cos (\pi \alpha) \cos (\pi \beta)}{\cos (\pi \Re(\alpha)) \cos (\pi \Re(\beta))} \frac{a_N (\Re(\beta))}{\cos^N \zeta} \right| \cosh \left(\Im\left(\zeta^{(1)}_{\nu, N, 0}\right)\right) \right. \\ &+\left. \sum_{\ell = 1}^{N} \left| \frac{\cos (\pi \alpha)}{\cos (\pi \Re(\alpha))} \frac{a_\ell (\Re(\alpha)) a_{N - \ell} (\beta)}{\sin^\ell \zeta \cos^{N - \ell} \zeta} \right| \cosh \left(\Im\left(\zeta^{(1)}_{\nu, N, \ell}\right)\right) \right) \frac{\Gamma (\Re(2\nu + \alpha + \beta) - N + 1)}{\left| \Gamma (2\nu + \alpha + \beta + 2) \right|},
\end{split}
\end{gather}
provided $\left|\Re(\alpha)\right|<\frac{1}{2}$ and $\left|\Re(\beta)\right| < N+ \frac{1}{2}$, and
\begin{gather}\label{cutbound3}
\begin{split}
& \left| R_N^{(P)} (\nu, \zeta, \alpha, \beta) \right|, \left|R_N^{(\DQ)}(\nu, \zeta, \alpha, \beta)\right| \le \\ & \sum_{\ell = 0}^N \left| \frac{\cos (\pi \alpha) \cos (\pi \beta)}{\cos (\pi \Re(\alpha)) \cos (\pi \Re(\beta))} \frac{a_\ell (\Re(\alpha)) a_{N - \ell} (\Re(\beta))}{\sin^\ell \zeta \cos^{N - \ell} \zeta} \right| \cosh \left(\Im\left(\zeta^{(1)}_{\nu, N, \ell}\right)\right) \frac{\Gamma (\Re(2\nu + \alpha + \beta) - N + 1)}{\left| \Gamma (2\nu + \alpha + \beta + 2) \right|},
\end{split}
\end{gather}
provided $\left|\Re(\alpha)\right|, \left|\Re(\beta)\right| < \frac{1}{2}$, with analogous bounds for $R_N^{(\SzQ)}(\nu, \zeta, \alpha, \beta)$ in which each instance of $\zeta^{(1)}_{\nu, N, \ell}$ ($0\le \ell \le N$) is replaced by $\zeta^{(2)}_{\nu, N, \ell}$. If $2\Re(\alpha)$ or $2\Re(\beta)$ is an odd integer, then the limiting values must to be taken in these bounds.
\end{theorem}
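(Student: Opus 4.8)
The plan is to reduce all three cut functions to boundary values of the Jacobi function of the second kind and then invoke Theorem~\ref{Qinfacthm}. Write $\mathcal{Q}(\xi) = \tfrac{2^{2\nu+\alpha+\beta+1}}{\pi}B(\nu+1,\nu+\alpha+\beta+1)Q_\nu^{(\alpha,\beta)}(\cosh(2\xi))$ for the left-hand side of \eqref{Qinfac}. Since $\cosh(2(\sigma\pm\im\zeta)) = \cosh(2\sigma)\cos(2\zeta)\pm\im\sinh(2\sigma)\sin(2\zeta)\to\cos(2\zeta)\pm\im 0^{+}$ as $\sigma\to0^{+}$ (using $\sin(2\zeta)>0$ for $0<\zeta<\tfrac{\pi}{2}$), the segments $\xi=\sigma+\im\zeta$ and $\xi=\sigma-\im\zeta$ land on the upper and lower edges of the cut at $x=\cos(2\zeta)$. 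Hence the defining limits \eqref{PQrel}, \eqref{Qfuncdef}, \eqref{Qfuncdef2} express $2^{2\nu+\alpha+\beta}B(\nu+1,\nu+\alpha+\beta+1)P_\nu^{(\alpha,\beta)}(\cos(2\zeta))=\tfrac{\im}{2}\big(\e^{\pi\alpha\im}\mathcal{Q}(\im\zeta)-\e^{-\pi\alpha\im}\mathcal{Q}(-\im\zeta)\big)$, and likewise $\SzQ$ as $\tfrac12(\mathcal{Q}(\im\zeta)+\mathcal{Q}(-\im\zeta))$ and $\DQ$ as $\tfrac12(\e^{\pi\alpha\im}\mathcal{Q}(\im\zeta)+\e^{-\pi\alpha\im}\mathcal{Q}(-\im\zeta))$. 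I would apply the exact identity \eqref{Qinfac} at $\xi=\sigma\pm\im\zeta$ and let $\sigma\to0^{+}$: the left-hand sides converge to the boundary values above, while the prefactor, the $g_n(\xi,\alpha,\beta)$, the explicitly exhibited main terms, and the remainder bounds of Theorem~\ref{Qinfacthm} all depend continuously on $\xi$ up to $\pm\im\zeta$ (where $\sin\zeta,\cos\zeta\neq0$), so both the identity and its estimate survive the limit. This legitimises evaluating \eqref{Qinfac} at $\xi=\pm\im\zeta$, where $\sinh(\pm\im\zeta)=\pm\im\sin\zeta$ and $\cosh(\pm\im\zeta)=\cos\zeta$.

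The second step is phase bookkeeping. Tracking the fractional powers by continuity from the positive real axis through the upper half of $\mathcal{D}$, one finds $\sin^{\alpha+1/2}\zeta\,\cos^{\beta+1/2}\zeta$ times the prefactor of \eqref{Qinfac} equals $\e^{-\im\Phi}$ with $\Phi=(2\nu+\alpha+\beta+1)\zeta+\tfrac{\pi}{2}(\alpha+\tfrac12)$, while $g_n(\im\zeta,\alpha,\beta)=\sum_{\ell=0}^{n}\frac{a_\ell(\alpha)a_{n-\ell}(\beta)}{\sin^{\ell}\zeta\,\cos^{n-\ell}\zeta}\,\e^{\im(n\zeta-\ell\pi/2)}$; at $\xi=-\im\zeta$ the conjugate phases occur. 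Thus the $(n,\ell)$-term of prefactor $\times\,g_n$ contributes $\e^{\mp\im\zeta^{(2)}_{\nu,n,\ell}}$ from the upper/lower edge. Feeding these through the three connection formulas and using $\tfrac12(\e^{-\im\theta}+\e^{\im\theta})=\cos\theta$ together with the identities $\e^{\pi\alpha\im}\e^{-\im\zeta^{(2)}_{\nu,n,\ell}}=-\im\,\e^{-\im\zeta^{(1)}_{\nu,n,\ell}}$ and $\tfrac{\im}{2}(\e^{\im\theta}-\e^{-\im\theta})=-\sin\theta$, the main double sums collapse exactly onto the series in \eqref{Pinfac2}, \eqref{Qinfac2} and \eqref{Qinfac3}; in particular the factors $\e^{\pm\pi\alpha\im}$ for $P$ and $\DQ$ are precisely what convert $\zeta^{(2)}_{\nu,n,\ell}$ into $\zeta^{(1)}_{\nu,n,\ell}$ and cosines into sines. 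This simultaneously defines the remainders: $R_N^{(\SzQ)}=\tfrac12\big(\e^{-\im\Phi}R_N^{(Q)}(\im\zeta)+\e^{\im\Phi}R_N^{(Q)}(-\im\zeta)\big)$, and $R_N^{(P)}$ and $-R_N^{(\DQ)}$ are the same combinations carrying the extra factors $\pm\tfrac{\im}{2}\e^{\pm\pi\alpha\im}$.

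For the bounds I would estimate these combinations directly, using two observations that make the estimate collapse to the stated clean form. First, at $\xi=\pm\im\zeta$ one has $\Re(\e^{2\xi})=\cos(2\zeta)<1$, so the three estimates of Theorem~\ref{Qinfacthm} carry no case distinction, and because $\big|\e^{\pm\im\zeta}/\sinh(\pm\im\zeta)\big|=\csc\zeta$ and $\big|\e^{\pm\im\zeta}/\cosh(\pm\im\zeta)\big|=\sec\zeta$ are independent of the sign, the bound for $|R_N^{(Q)}(\im\zeta)|$ equals that for $|R_N^{(Q)}(-\im\zeta)|$; call this common value $B_N$, which is exactly the $\ell$-sum on the right of \eqref{cutbound1}, \eqref{cutbound2} or \eqref{cutbound3} stripped of its $\cosh$ factors. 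Second, the triangle inequality gives $|R_N^{(\SzQ)}|\le\tfrac12(\e^{\Im\Phi}+\e^{-\Im\Phi})B_N=\cosh(\Im\Phi)\,B_N$, and $|R_N^{(P)}|,|R_N^{(\DQ)}|\le\tfrac12(\e^{\Im\Phi-\pi\Im\alpha}+\e^{-\Im\Phi+\pi\Im\alpha})B_N=\cosh(\Im\Phi-\pi\Im\alpha)\,B_N$. A one-line computation gives $\Im\Phi=\Im\zeta^{(2)}_{\nu,N,\ell}$ and $\Im\Phi-\pi\Im\alpha=\Im\zeta^{(1)}_{\nu,N,\ell}$, and—crucially—these imaginary parts are independent of $N$ and $\ell$ (since $\zeta$, $\ell$, $N$, $\tfrac{\pi}{2}$ are real). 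The single $\cosh$ prefactor may therefore be distributed unchanged into each term of $B_N$, producing exactly the per-term factors $\cosh(\Im\zeta^{(2)}_{\nu,N,\ell})$ for $\SzQ$ and $\cosh(\Im\zeta^{(1)}_{\nu,N,\ell})$ for $P$ and $\DQ$, i.e.\ \eqref{cutbound1}–\eqref{cutbound3}.

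The main obstacle is the phase bookkeeping of the second step: fixing the branches of the fractional powers along the continuation from the positive real axis to $\pm\im\zeta$ so that the $\pm\tfrac{\pi}{2}$ shifts in $\zeta^{(1)}_{\nu,n,\ell}$ and $\zeta^{(2)}_{\nu,n,\ell}$ emerge with the correct signs, and checking that the Stokes factors $\e^{\pm\pi\alpha\im}$ interchange $\zeta^{(2)}$ and $\zeta^{(1)}$ and trade cosines for sines exactly as the three connection formulas demand. Once the phases are pinned down the estimates are automatic, precisely because on the cut every Bessel-type modulus is edge-independent and $\Im\zeta^{(1)}_{\nu,N,\ell}$, $\Im\zeta^{(2)}_{\nu,N,\ell}$ do not depend on $\ell$. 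As in Theorem~\ref{Qinfacthm}, the degenerate cases where $2\Re(\alpha)$ or $2\Re(\beta)$ is an odd integer are recovered by passing to the limit in the bounds.
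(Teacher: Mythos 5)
Your proposal is correct and follows essentially the same route as the paper: express $P_\nu^{(\alpha,\beta)}$, $\SzQ_\nu^{(\alpha,\beta)}$, $\DQ_\nu^{(\alpha,\beta)}$ through the boundary values of $Q_\nu^{(\alpha,\beta)}(\cosh(2\xi))$ at $\xi=\pm\im\zeta$ via \eqref{PQrel2}--\eqref{Qrel4}, evaluate the expansion \eqref{Qinfac} there with the stated phase bookkeeping, and bound the resulting combinations by the triangle inequality, exploiting that $\Im\zeta^{(1)}_{\nu,N,\ell}$ and $\Im\zeta^{(2)}_{\nu,N,\ell}$ do not depend on $\ell$. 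The only cosmetic differences are that the paper justifies the limit $\varepsilon\to0^+$ through the explicit integral representation \eqref{RP2int} of $R_N^{(Q)}=R_N^{(P2)}$ rather than your continuity argument, and derives \eqref{cutbound3} from \eqref{cutbound2} via \eqref{coeffineq} rather than from the third bound of Theorem \ref{Qinfacthm} directly; both variants are sound.
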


The proof of Theorem \ref{cutinfacthm} is given in Section \ref{infproof2}.

We now turn to the error bounds for the factorial-type expansions. In the following two theorems, we provide computable bounds for the factorial expansions \eqref{Pfacasymp} and \eqref{Qfacasymp} of the Jacobi functions $P_\nu^{(\alpha, \beta)}(z)$ and $Q_\nu^{(\alpha, \beta)}(z)$, respectively. In these theorems, we use the following notation:
\begin{equation}\label{abprime}
\alpha' = \begin{cases}
   \alpha, & \text{if }\; \Re(\alpha) \ge \frac{1}{2}, \\
   \pm \alpha, & \text{if }\; -\frac{1}{2} < \Re(\alpha) < \frac{1}{2}, \\
   -\alpha, & \text{if }\; \Re(\alpha) \le -\frac{1}{2},\end{cases} \qquad \beta' = \begin{cases}
   \beta, & \text{if }\; \Re(\beta) \ge \frac{1}{2}, \\
   \pm \beta, & \text{if }\; -\frac{1}{2} < \Re(\beta) < \frac{1}{2}, \\
   -\beta, & \text{if }\; \Re(\beta) \le -\frac{1}{2}.
\end{cases}
\end{equation}
The symbol $\pm \alpha$ indicates that either $\alpha$ or $-\alpha$ can be chosen arbitrarily, with the same meaning applying to $\pm \beta$.

\begin{theorem}\label{Pfacthm} Let $\xi\in \mathcal{D}\setminus \mathbb{R}^+$ and let $N$, $M$ both be arbitrary non-negative integers. Let $\nu$, $\alpha$, and $\beta$ be complex numbers satisfying $\Re(2\nu + \alpha + \beta + 1) > \Re(\alpha' + \beta')$. Then we have
\begin{gather}\label{Pfac}
\begin{split}
 &\frac{\pi P_\nu^{(\alpha ,\beta )} (\cosh (2\xi ))}{2^{2\nu + \alpha  + \beta } B(\nu + \alpha  + 1,\nu + \beta  + 1)}
 = \\ & \left( \frac{\e^{ \xi }}{\sinh \xi} \right)^{\alpha  + 1/2} \left( \frac{\e^{ \xi }}{\cosh \xi} \right)^{\beta  + 1/2} \e^{2\nu \xi } \left(\sum_{n = 0}^{N - 1} (-1)^n g_n(\xi,\alpha,\beta)\frac{\Gamma (2\nu+\alpha+\beta+2)}{\Gamma(2\nu+\alpha+\beta+n+2)} + \widehat{R}_N^{(P1)} (\nu,\xi ,\alpha ,\beta ) \right)
\\ & - C(\xi ,\alpha ) \left( \frac{\e^{ - \xi }}{\sinh \xi} \right)^{\alpha  + 1/2} \left( \frac{\e^{ - \xi }}{\cosh \xi} \right)^{\beta  + 1/2} \e^{ - 2\nu \xi }\\ & \times \left( \sum_{m = 0}^{M - 1} (-1)^m g_m(-\xi,\alpha,\beta)\frac{\Gamma (2\nu+\alpha+\beta+2)}{\Gamma(2\nu+\alpha+\beta+m+2)} + \widehat{R}_M^{(P2)} (\nu,\xi ,\alpha ,\beta ) \right),
\end{split}
\end{gather}
where $C(\xi ,\alpha )$ is defined in \eqref{Cdef}, and the remainder terms satisfy the following estimates:
\begin{gather}\label{Pfacbound1}
\begin{split}
&\left| \widehat{R}_N^{(P_1)}(\nu, \xi, \alpha, \beta) \right| \le \frac{\Gamma( \Re(2\nu+\alpha+\beta-\alpha'-\beta') + 1)}{\left| \Gamma(2\nu+\alpha+\beta-\alpha'-\beta' + 1)\right|} \frac{\Gamma\left( \Re(\alpha') + \frac{1}{2} \right) \Gamma\left( \Re(\beta') + \frac{1}{2} \right)}{\left|\Gamma\left( \alpha' + \frac{1}{2} \right) \Gamma\left( \beta' + \frac{1}{2} \right) \right|}
\\ & \times \left| \frac{\cos(\pi \alpha) \cos(\pi \beta)}{\cos(\pi \Re(\alpha)) \cos(\pi \Re(\beta))} a_N(\Re(\alpha)) \left( \frac{\e^\xi}{\sinh \xi} \right)^N \right| \frac{\left| \Gamma(2\nu + \alpha + \beta + 2) \right|}{\Gamma(\Re(2\nu + \alpha + \beta) + N + 2)} 
\\ & \times \left| 1 - \e^{4\xi} \right| \left| \csc(2 \Im(\xi)) \right|  + \frac{\Gamma(\Re(2\nu+\alpha+\beta-\alpha'-\beta') + 1)}{\left| \Gamma(2\nu+\alpha+\beta-\alpha'-\beta' + 1)\right|} \frac{\Gamma\left( \Re(\beta') + \frac{1}{2} \right)}{\left|\Gamma\left( \beta' + \frac{1}{2} \right) \right|} \\ & \times \sum_{\ell = 0}^{N - 1} \left| \frac{\cos(\pi \beta)}{\cos(\pi \Re(\beta))} \frac{\Gamma\left( \Re(\alpha') + \ell + \frac{1}{2} \right)}{\Gamma\left( \alpha' + \ell + \frac{1}{2} \right)} a_\ell(\alpha) a_{N - \ell}(\Re(\beta)) \left( \frac{\e^\xi}{\sinh \xi} \right)^\ell \left( \frac{\e^\xi}{\cosh \xi} \right)^{N - \ell} \right| \\ & \times \frac{\left| \Gamma(2\nu + \alpha + \beta + 2) \right|}{\Gamma(\Re(2\nu + \alpha + \beta) + N + 2)} \times 
\begin{cases}
    \left| 1 + \e^{2\xi} \right| \left| \csc(2 \Im(\xi)) \right|, & \text{if }\; -1 < \Re\left(\e^{- 2\xi}\right) \le 0, \\ 
    \left| 1 + \e^{2\xi} \right|, & \text{if }\; 0 < \Re\left(\e^{- 2\xi}\right),
\end{cases}
\end{split}
\end{gather}
provided $ - \frac{1}{2} < \Re(\alpha) < N + \frac{1}{2}$ and $\left|\Re(\beta)\right| < \frac{1}{2}$,
\begin{gather}\label{Pfacbound2}
\begin{split}
&\left| \widehat{R}_N^{(P_1)}(\nu, \xi, \alpha, \beta) \right| \le \frac{\Gamma(\Re(2\nu+\alpha+\beta-\alpha'-\beta') + 1)}{\left| \Gamma(2\nu+\alpha+\beta-\alpha'-\beta' + 1)\right|} \frac{\Gamma\left( \Re(\alpha') + \frac{1}{2} \right) \Gamma\left( \Re(\beta') + \frac{1}{2} \right)}{\left|\Gamma\left( \alpha' + \frac{1}{2} \right) \Gamma\left( \beta' + \frac{1}{2} \right) \right|}
\\ & \times \left| \frac{\cos(\pi \alpha) \cos(\pi \beta)}{\cos(\pi \Re(\alpha)) \cos(\pi \Re(\beta))} a_N(\Re(\beta)) \left( \frac{\e^\xi}{\cosh \xi} \right)^N \right| \frac{\Gamma(2\nu + \alpha + \beta + 2)}{\Gamma(\Re(2\nu + \alpha + \beta) + N + 2)}
\\ & \times \left| 1 - \e^{4\xi} \right| \left| \csc(2 \Im(\xi)) \right| + \frac{\Gamma(\Re(2\nu+\alpha+\beta-\alpha'-\beta') + 1)}{\left| \Gamma(2\nu+\alpha+\beta-\alpha'-\beta' + 1)\right|} \frac{\Gamma\left( \Re(\alpha') + \frac{1}{2} \right)}{\left|\Gamma\left( \alpha' + \frac{1}{2} \right) \right|}
\\ & \times \sum_{\ell = 1}^N \left| \frac{\cos(\pi \alpha)}{\cos(\pi \Re(\alpha))} \frac{\Gamma\left( \Re(\beta') + N - \ell + \frac{1}{2} \right)}{\Gamma\left( \beta' + N - \ell + \frac{1}{2} \right)} a_\ell(\Re(\alpha)) a_{N - \ell}(\beta) \left( \frac{\e^\xi}{\sinh \xi} \right)^\ell \left( \frac{\e^\xi}{\cosh \xi} \right)^{N - \ell} \right| \\ & \times \frac{\left| \Gamma(2\nu + \alpha + \beta + 2) \right|}{\Gamma(\Re(2\nu + \alpha + \beta) + N + 2)} \times \begin{cases} 
    \left| 1 - \e^{2\xi} \right| \left| \csc(2 \Im(\xi)) \right|, & \text{if }\; 0 \le \Re\left(\e^{- 2\xi}\right) < 1, \\ 
    \left| 1 - \e^{2\xi} \right|, & \text{if }\; \Re\left(\e^{- 2\xi}\right) < 0,
\end{cases}
\end{split}
\end{gather}
provided $\left|\Re(\alpha)\right|< \frac{1}{2}$ and $- \frac{1}{2} < \Re(\beta) < N+\frac{1}{2}$, and
\begin{gather}\label{Pfacbound3}
\begin{split}
& \left| \widehat{R}_N^{(P_1)}(\nu, \xi, \alpha, \beta) \right| \le \frac{\Gamma(\Re(2\nu+\alpha+\beta-\alpha'-\beta') + 1)}{\left| \Gamma(2\nu+\alpha+\beta-\alpha'-\beta' + 1)\right|} \frac{\Gamma\left( \Re(\alpha') + \frac{1}{2} \right) \Gamma\left( \Re(\beta') + \frac{1}{2} \right)}{\left|\Gamma\left( \alpha' + \frac{1}{2} \right) \Gamma\left( \beta' + \frac{1}{2} \right) \right|}
\\ &\times \sum_{\ell = 0}^N \left| \frac{\cos(\pi \alpha) \cos(\pi \beta)}{\cos(\pi \Re(\alpha)) \cos(\pi \Re(\beta))} a_\ell(\Re(\alpha)) a_{N - \ell}(\Re(\beta)) \left( \frac{\e^\xi}{\sinh \xi} \right)^\ell \left( \frac{\e^\xi}{\cosh \xi} \right)^{N - \ell} \right| 
\\ & \times \frac{\left| \Gamma(2\nu + \alpha + \beta + 2) \right|}{\Gamma(\Re(2\nu + \alpha + \beta) + N + 2)} \times \left| 1 - \e^{4\xi} \right| \left| \csc(2 \Im(\xi)) \right|,
\end{split}
\end{gather}
provided $\left|\Re(\alpha)\right|, \left|\Re(\beta)\right| < \frac{1}{2}$. Similarly,
\begin{gather}\label{Pfacbound4}
\begin{split}
&\left| \widehat{R}_M^{(P_2)}(\nu, \xi, \alpha, \beta) \right| \le \frac{\Gamma(\Re(2\nu+\alpha+\beta-\alpha'-\beta') + 1)}{\left| \Gamma(2\nu+\alpha+\beta-\alpha'-\beta' + 1)\right|} \frac{\Gamma\left( \Re(\alpha') + \frac{1}{2} \right) \Gamma\left( \Re(\beta') + \frac{1}{2} \right)}{\left|\Gamma\left( \alpha' + \frac{1}{2} \right) \Gamma\left( \beta' + \frac{1}{2} \right) \right|}
\\ & \times \left| \frac{\cos(\pi \alpha) \cos(\pi \beta)}{\cos(\pi \Re(\alpha)) \cos(\pi \Re(\beta))} a_M(\Re(\alpha)) \left( \frac{\e^{-\xi}}{\sinh \xi} \right)^M \right| \frac{\left| \Gamma(2\nu + \alpha + \beta + 2) \right|}{\Gamma(\Re(2\nu + \alpha + \beta) + M + 2)} 
\\ & \times \begin{cases}
   \left| 1 \pm \e^{-2\xi} \right|, & \text{if }\; 1 \le \pm \Re \left( \e^{2\xi} \right), \\ 
   \left| 1 - \e^{-4\xi} \right| \left| \csc (2 \Im (\xi)) \right|, & \text{if }\; \left| \Re \left( \e^{2\xi} \right) \right| < 1 
\end{cases}  + \frac{\Gamma(\Re(2\nu+\alpha+\beta-\alpha'-\beta') + 1)}{\left| \Gamma(2\nu+\alpha+\beta-\alpha'-\beta' + 1)\right|} \frac{\Gamma\left( \Re(\beta') + \frac{1}{2} \right)}{\left|\Gamma\left( \beta' + \frac{1}{2} \right) \right|} \\ & \times \sum_{\ell = 0}^{M - 1} \left| \frac{\cos(\pi \beta)}{\cos(\pi \Re(\beta))} \frac{\Gamma\left( \Re(\alpha') + \ell + \frac{1}{2} \right)}{\Gamma\left( \alpha' + \ell + \frac{1}{2} \right)} a_\ell(\alpha) a_{M - \ell}(\Re(\beta)) \left( \frac{\e^{-\xi}}{\sinh \xi} \right)^\ell \left( \frac{\e^{-\xi}}{\cosh \xi} \right)^{M - \ell} \right| \\ & \times \frac{\left| \Gamma(2\nu + \alpha + \beta + 2) \right|}{\Gamma(\Re(2\nu + \alpha + \beta) + M + 2)} \times 
\begin{cases} 
    1, & \text{if }\; \Re\left(\e^{2\xi}\right) \le -1, \\ 
    \left| 1 + \e^{- 2\xi} \right| \left| \csc(2 \Im(\xi)) \right|, & \text{if }\; -1 < \Re\left(\e^{2\xi}\right) \le 0, \\ 
    \left| 1 + \e^{- 2\xi} \right|, & \text{if }\; 0 < \Re\left(\e^{2\xi}\right),
\end{cases}
\end{split}
\end{gather}
provided $ - \frac{1}{2} < \Re(\alpha) < M + \frac{1}{2}$ and $\left|\Re(\beta)\right| < \frac{1}{2}$,
\begin{gather}\label{Pfacbound5}
\begin{split}
&\left| \widehat{R}_M^{(P_2)}(\nu, \xi, \alpha, \beta) \right| \le \frac{\Gamma(\Re(2\nu+\alpha+\beta-\alpha'-\beta') + 1)}{\left| \Gamma(2\nu+\alpha+\beta-\alpha'-\beta' + 1)\right|} \frac{\Gamma\left( \Re(\alpha') + \frac{1}{2} \right) \Gamma\left( \Re(\beta') + \frac{1}{2} \right)}{\left|\Gamma\left( \alpha' + \frac{1}{2} \right) \Gamma\left( \beta' + \frac{1}{2} \right) \right|}
\\ & \times \left| \frac{\cos(\pi \alpha) \cos(\pi \beta)}{\cos(\pi \Re(\alpha)) \cos(\pi \Re(\beta))} a_M(\Re(\beta)) \left( \frac{\e^{-\xi}}{\cosh \xi} \right)^M \right| \frac{\Gamma(2\nu + \alpha + \beta + 2)}{\Gamma(\Re(2\nu + \alpha + \beta) + M + 2)}
\\ & \times \begin{cases}
   \left| 1 \pm \e^{-2\xi} \right|, & \text{if }\; 1 \le \pm \Re \left( \e^{2\xi} \right), \\ 
   \left| 1 - \e^{-4\xi} \right| \left| \csc (2 \Im (\xi)) \right|, & \text{if }\; \left| \Re \left( \e^{2\xi} \right) \right| < 1 
\end{cases} + \frac{\Gamma(\Re(2\nu+\alpha+\beta-\alpha'-\beta') + 1)}{\left| \Gamma(2\nu+\alpha+\beta-\alpha'-\beta' + 1)\right|} \frac{\Gamma\left( \Re(\alpha') + \frac{1}{2} \right)}{\left|\Gamma\left( \alpha' + \frac{1}{2} \right) \right|}
\\ & \times \sum_{\ell = 1}^M \left| \frac{\cos(\pi \alpha)}{\cos(\pi \Re(\alpha))} \frac{\Gamma\left( \Re(\beta') + M - \ell + \frac{1}{2} \right)}{\Gamma\left( \beta' + M - \ell + \frac{1}{2} \right)} a_\ell(\Re(\alpha)) a_{M - \ell}(\beta) \left( \frac{\e^{-\xi}}{\sinh \xi} \right)^\ell \left( \frac{\e^{-\xi}}{\cosh \xi} \right)^{M - \ell} \right| \\ & \times \frac{\left| \Gamma(2\nu + \alpha + \beta + 2) \right|}{\Gamma(\Re(2\nu + \alpha + \beta) + M + 2)} \times 
\begin{cases} 
    1, & \text{if }\; 1 \le \Re\left(\e^{2\xi}\right), \\ 
    \left| 1 - \e^{- 2\xi} \right| \left| \csc(2 \Im(\xi)) \right|, & \text{if }\; 0 \le \Re\left(\e^{2\xi}\right) < 1, \\ 
    \left| 1 - \e^{- 2\xi} \right|, & \text{if }\; \Re\left(\e^{2\xi}\right) < 0,
\end{cases}
\end{split}
\end{gather}
provided $\left|\Re(\alpha)\right|< \frac{1}{2}$ and $- \frac{1}{2} < \Re(\beta) < M+\frac{1}{2}$, and
\begin{gather}\label{Pfacbound6}
\begin{split}
& \left| \widehat{R}_M^{(P_2)}(\nu, \xi, \alpha, \beta) \right| \le \frac{\Gamma(\Re(2\nu+\alpha+\beta-\alpha'-\beta') + 1)}{\left| \Gamma(2\nu+\alpha+\beta-\alpha'-\beta' + 1)\right|} \frac{\Gamma\left( \Re(\alpha') + \frac{1}{2} \right) \Gamma\left( \Re(\beta') + \frac{1}{2} \right)}{\left|\Gamma\left( \alpha' + \frac{1}{2} \right) \Gamma\left( \beta' + \frac{1}{2} \right) \right|}
\\ &\times \sum_{\ell = 0}^M \left| \frac{\cos(\pi \alpha) \cos(\pi \beta)}{\cos(\pi \Re(\alpha)) \cos(\pi \Re(\beta))} a_\ell(\Re(\alpha)) a_{M - \ell}(\Re(\beta)) \left( \frac{\e^{-\xi}}{\sinh \xi} \right)^\ell \left( \frac{\e^{-\xi}}{\cosh \xi} \right)^{M - \ell} \right| 
\\ & \times \frac{\left| \Gamma(2\nu + \alpha + \beta + 2) \right|}{\Gamma(\Re(2\nu + \alpha + \beta) + M + 2)} \times \begin{cases}
   \left| 1 \pm \e^{-2\xi} \right|, & \text{if }\; 1 \le \pm \Re \left( \e^{2\xi} \right), \\ 
   \left| 1 - \e^{-4\xi} \right| \left| \csc (2 \Im (\xi)) \right|, & \text{if }\; \left| \Re \left( \e^{2\xi} \right) \right| < 1,
\end{cases}
\end{split}
\end{gather}
provided $\left|\Re(\alpha)\right|, \left|\Re(\beta)\right| < \frac{1}{2}$. If $2\Re(\alpha)$ or $2\Re(\beta)$ is an odd integer, then the limiting values must to be taken in these bounds. The fractional powers are defined to be positive for positive real $\xi$ and are defined by continuity elsewhere.
\end{theorem}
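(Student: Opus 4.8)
The plan is to route the entire argument through the Jacobi function of the second kind, exploiting the fact that $P_\nu^{(\alpha,\beta)}(\cosh(2\xi))$ decomposes into a \emph{recessive} and a \emph{dominant} solution of \eqref{JacobiEq} near $z=\infty$. Analytically continuing the relation \eqref{PQrel} off the interval $(-1,1)$ into $\mathcal{D}\setminus\mathbb{R}^+$ yields a connection formula whose recessive component is exactly $-C(\xi,\alpha)$ times the normalised second-kind function appearing in \eqref{Qfacasymp}, while its dominant component carries the factor $\e^{2\nu\xi}$. Comparing with \eqref{Pfac} shows that the $(P2)$-part is literally $-C(\xi,\alpha)$ times the normalised $Q_\nu^{(\alpha,\beta)}$, so that $\widehat{R}_M^{(P2)}(\nu,\xi,\alpha,\beta)$ coincides with the remainder of the factorial expansion \eqref{Qfacasymp}. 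Consequently the bounds \eqref{Pfacbound4}--\eqref{Pfacbound6} are inherited verbatim from the companion second-kind theorem, and nothing new needs to be proved for them; note that these bounds stay finite on $\mathbb{R}^+$, so the exclusion $\xi\notin\mathbb{R}^+$ originates entirely with the $(P1)$-part.

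The substance of the proof lies in the dominant $(P1)$-part, which defines $\widehat{R}_N^{(P1)}$. The naive idea of obtaining it by replacing $\nu$ with $-\nu-\alpha-\beta-1$ in the second-kind result fails, because that substitution violates the standing hypothesis $\Re(2\nu+\alpha+\beta+1)>\Re(\alpha'+\beta')$. Instead I would follow Hahn's device, reformulating the ${}_2F_1$ in \eqref{Qhypergeom} as a double integral, inserting the explicit factorial-series remainder of the hypergeometric function into the connection formula, and thereby representing $\widehat{R}_N^{(P1)}$ as a single contour integral. The contour must then be rotated off the positive real axis; this is legitimate precisely when $\xi\notin\mathbb{R}^+$, i.e.\ $\Im(\xi)\neq0$, and the endpoint contributions produced by the rotation are exactly the factors $\lvert1-\e^{4\xi}\rvert$, $\lvert1\pm\e^{2\xi}\rvert$ and the singular $\lvert\csc(2\Im(\xi))\rvert$ appearing in \eqref{Pfacbound1}--\eqref{Pfacbound3}.

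To extract the closed-form bounds I would separate the remainder into its diagonal contribution (the genuine first neglected term, carrying $a_N(\Re(\alpha))$ or $a_N(\Re(\beta))$) and the off-diagonal convolution sum $\sum_\ell a_\ell a_{N-\ell}$, reproducing the three parameter regimes $\lvert\Re(\alpha)\rvert<\tfrac12$ and/or $\lvert\Re(\beta)\rvert<\tfrac12$. Each coefficient is majorised through the identity in \eqref{andef}, which gives $\lvert a_n(\mu)\rvert\le\lvert\cos(\pi\mu)/\cos(\pi\Re(\mu))\rvert\,a_n(\Re(\mu))$ and hence the ubiquitous ratios $\cos(\pi\alpha)/\cos(\pi\Re(\alpha))$; the Euler-type integral over the auxiliary variable is controlled by a Beta integral, producing the Gamma-ratio majorants $\Gamma(\Re(\alpha')+\ell+\tfrac12)/\lvert\Gamma(\alpha'+\ell+\tfrac12)\rvert$ together with the prefactor $\Gamma(\Re(2\nu+\alpha+\beta-\alpha'-\beta')+1)/\lvert\Gamma(2\nu+\alpha+\beta-\alpha'-\beta'+1)\rvert$ (here the splitting of $\alpha,\beta$ into $\alpha',\beta'$ via \eqref{abprime} is what keeps these ratios meaningful across all regimes). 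Estimating the remaining contour integral by its supremum times its length then yields the case distinctions on $\Re(\e^{\pm2\xi})$.

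The hard part will be the dominant $(P1)$-term. Because it grows like $\e^{2\nu\xi}$, no absolute estimate survives unless it is realised \emph{exactly} as a single convergent integral through the connection formula; the delicate step is choosing the rotated contour so that the integrand decays, which is impossible on $\mathbb{R}^+$ and unavoidably introduces the $\lvert\csc(2\Im(\xi))\rvert$ blow-up as $\Im(\xi)\to0$. Matching the resulting estimate to the precise right-hand sides of \eqref{Pfacbound1}--\eqref{Pfacbound3}, with the correct Gamma ratios and the three regimes handled simultaneously, is the principal bookkeeping burden; the $(P2)$-part, by contrast, requires no work beyond citing the second-kind theorem.
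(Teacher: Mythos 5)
Your overall architecture does track the paper's actual proof in several respects: the paper likewise works from Hahn's double-integral representations (its $Y_\pm(\xi)$ in \eqref{Ydef}, with \eqref{PHahn} and \eqref{QHahn} playing the role of your connection formula), it likewise exploits the identity $\widehat{R}_N^{(Q)}=\widehat{R}_N^{(P_2)}$, so that the bounds \eqref{Pfacbound4}--\eqref{Pfacbound6} and those of the second-kind theorem are literally the same statements (the paper proves them once, via $Y_-$, and lets the $Q$-theorem inherit them -- the reverse of your ordering, but logically equivalent provided one of the two is actually proved), and it likewise obtains the coefficient ratios from $\left|a_\ell(\beta)\right|\le\left|\frac{\cos(\pi\beta)}{\cos(\pi\Re(\beta))}a_\ell(\Re(\beta))\right|$ and the Gamma-ratio prefactors from beta integrals in the auxiliary variables. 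Your diagnosis that the exclusion $\xi\notin\mathbb{R}^+$ originates entirely with the $(P1)$ part is also correct.

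The genuine gap is the mechanism you propose for the estimates \eqref{Pfacbound1}--\eqref{Pfacbound3}. You claim $\widehat{R}_N^{(P1)}$ is to be bounded by rotating a contour off the positive real axis, with ``endpoint contributions'' producing the factors $\left|1-\e^{4\xi}\right|$, $\left|1\pm\e^{2\xi}\right|$ and $\left|\csc(2\Im(\xi))\right|$. That is not a workable device here: after inserting the binomial expansion \eqref{binomprod} (with $u=t$, $v_1=s$, $v_2=1-s$, $A=\tfrac12-\alpha'$, $B=\tfrac12-\beta'$) into $Y_+(\xi)$, the remainder $\widehat{R}_N^{(P_1)}$ is the double integral \eqref{Rhatexpr0} over the compact square $[0,1]^2$, so there is no ray to infinity to rotate and no exponential decay to engineer; the rotation-plus-Cauchy argument belongs to the proofs of the \emph{inverse} factorial expansions in Section \ref{infproof1}, where the $t$-integrals do run over $(0,+\infty)$, and you appear to have transplanted it. In the factorial setting all integration variables stay real, and the quoted factors arise instead from a pointwise lower bound on the denominator in the beta-type representation of the one-variable binomial remainder, $M_N(u,w,A)=u^Nw^N\frac{\sin(\pi A)}{\pi}\int_0^1 t^{A+N-1}(1-t)^{-A}(1-tuw)^{-1}\id t$: one bounds $\left|1-tuw\right|$ below by the distance data of the singular point $1/w$ from $[0,1]$, e.g. $\left|1-tuw\right|\ge\left|\Im(w)\right|/\left|w\right|$ when $0<\Re(w)\le|w|^2$, and for $w=\frac{\e^{\xi}}{2\sinh\xi}=\frac{1}{1-\e^{-2\xi}}$ one computes $\left|w/\Im(w)\right|=\left|1-\e^{2\xi}\right|\left|\csc(2\Im(\xi))\right|$, which is exactly where the $\csc$ blow-up comes from. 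Correspondingly, $\xi\in\mathbb{R}^+$ is excluded not because a rotated contour fails to decay, but because for $\xi>0$ one has $\frac{\e^{\xi}}{2\sinh\xi}>1$, so the singularity of the factor $\bigl(1-ts\frac{\e^{\xi}}{2\sinh\xi}\bigr)^{\alpha'-1/2}$ lies inside the integration square and the representation $Y_+$ itself ceases to exist. Since your plan defers precisely this estimation step to an incorrect device, the bounds \eqref{Pfacbound1}--\eqref{Pfacbound3} are not established by your argument; the rest of the proposal (the $(P2)$/second-kind identification and the general bookkeeping scheme) is sound.
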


\begin{theorem}\label{Qfacthm} Let $\xi\in \mathcal{D}$ and let $N$ be an arbitrary non-negative integer. Let $\nu$, $\alpha$, and $\beta$ be complex numbers satisfying $\Re(2\nu + \alpha + \beta + 1) > \Re(\alpha' + \beta')$. Then we have
\begin{gather}\label{Qfac}
\begin{split}
 &\frac{Q_\nu^{(\alpha ,\beta )} (\cosh (2\xi ))}{2^{2\nu + \alpha  + \beta } B(\nu + \alpha  + 1,\nu + \beta  + 1)}
 = \\ & \left( \frac{\e^{ - \xi }}{\sinh \xi} \right)^{\alpha  + 1/2} \left( \frac{\e^{ - \xi }}{\cosh \xi} \right)^{\beta  + 1/2} \e^{ - 2\nu \xi } \left( \sum_{n= 0}^{N - 1} (-1)^n g_n(-\xi,\alpha,\beta)\frac{\Gamma (2\nu+\alpha+\beta+2)}{\Gamma(2\nu+\alpha+\beta+n+2)} + \widehat{R}_N^{(Q)} (\nu,\xi ,\alpha ,\beta ) \right),
\end{split}
\end{gather}
where the remainder term satisfies the following estimates:
\begin{align*}
&\left| \widehat{R}_N^{(Q)}(\nu, \xi, \alpha, \beta) \right| \le \frac{\Gamma(\Re(2\nu+\alpha+\beta-\alpha'-\beta') + 1)}{\left| \Gamma(2\nu+\alpha+\beta-\alpha'-\beta' + 1)\right|} \frac{\Gamma\left( \Re(\alpha') + \frac{1}{2} \right) \Gamma\left( \Re(\beta') + \frac{1}{2} \right)}{\left|\Gamma\left( \alpha' + \frac{1}{2} \right) \Gamma\left( \beta' + \frac{1}{2} \right) \right|}
\\ & \times \left| \frac{\cos(\pi \alpha) \cos(\pi \beta)}{\cos(\pi \Re(\alpha)) \cos(\pi \Re(\beta))} a_N(\Re(\alpha)) \left( \frac{\e^{-\xi}}{\sinh \xi} \right)^N \right| \frac{\left| \Gamma(2\nu + \alpha + \beta + 2) \right|}{\Gamma(\Re(2\nu + \alpha + \beta) + N + 2)} 
\\ & \times \begin{cases}
   \left| 1 \pm \e^{-2\xi} \right|, & \text{if }\; 1 \le \pm \Re \left( \e^{2\xi} \right), \\ 
   \left| 1 - \e^{-4\xi} \right| \left| \csc (2 \Im (\xi)) \right|, & \text{if }\; \left| \Re \left( \e^{2\xi} \right) \right| < 1 
\end{cases}  + \frac{\Gamma(\Re(2\nu+\alpha+\beta-\alpha'-\beta') + 1)}{\left| \Gamma(2\nu+\alpha+\beta-\alpha'-\beta' + 1)\right|} \frac{\Gamma\left( \Re(\beta') + \frac{1}{2} \right)}{\left|\Gamma\left( \beta' + \frac{1}{2} \right) \right|} \\ & \times \sum_{\ell = 0}^{N - 1} \left| \frac{\cos(\pi \beta)}{\cos(\pi \Re(\beta))} \frac{\Gamma\left( \Re(\alpha') + \ell + \frac{1}{2} \right)}{\Gamma\left( \alpha' + \ell + \frac{1}{2} \right)} a_\ell(\alpha) a_{N - \ell}(\Re(\beta)) \left( \frac{\e^{-\xi}}{\sinh \xi} \right)^\ell \left( \frac{\e^{-\xi}}{\cosh \xi} \right)^{N - \ell} \right| \\ & \times \frac{\left| \Gamma(2\nu + \alpha + \beta + 2) \right|}{\Gamma(\Re(2\nu + \alpha + \beta) + N + 2)} \times 
\begin{cases} 
    1, & \text{if }\; \Re\left(\e^{2\xi}\right) \le -1, \\ 
    \left| 1 + \e^{- 2\xi} \right| \left| \csc(2 \Im(\xi)) \right|, & \text{if }\; -1 < \Re\left(\e^{2\xi}\right) \le 0, \\ 
    \left| 1 + \e^{- 2\xi} \right|, & \text{if }\; 0 < \Re\left(\e^{2\xi}\right),
\end{cases}
\end{align*}
provided $ - \frac{1}{2} < \Re(\alpha) <N + \frac{1}{2}$ and $\left|\Re(\beta)\right| < \frac{1}{2}$,
\begin{align*}
&\left| \widehat{R}_N^{(Q)}(\nu, \xi, \alpha, \beta) \right| \le \frac{\Gamma(\Re(2\nu+\alpha+\beta-\alpha'-\beta') + 1)}{\left| \Gamma(2\nu+\alpha+\beta-\alpha'-\beta' + 1)\right|} \frac{\Gamma\left( \Re(\alpha') + \frac{1}{2} \right) \Gamma\left( \Re(\beta') + \frac{1}{2} \right)}{\left|\Gamma\left( \alpha' + \frac{1}{2} \right) \Gamma\left( \beta' + \frac{1}{2} \right) \right|}
\\ & \times \left| \frac{\cos(\pi \alpha) \cos(\pi \beta)}{\cos(\pi \Re(\alpha)) \cos(\pi \Re(\beta))} a_N(\Re(\beta)) \left( \frac{\e^{-\xi}}{\cosh \xi} \right)^N \right| \frac{\Gamma(2\nu + \alpha + \beta + 2)}{\Gamma(\Re(2\nu + \alpha + \beta) + N + 2)}
\\ & \times \begin{cases}
   \left| 1 \pm \e^{-2\xi} \right|, & \text{if }\; 1 \le \pm \Re \left( \e^{2\xi} \right), \\ 
   \left| 1 - \e^{-4\xi} \right| \left| \csc (2 \Im (\xi)) \right|, & \text{if }\; \left| \Re \left( \e^{2\xi} \right) \right| < 1 
\end{cases} + \frac{\Gamma(\Re(2\nu+\alpha+\beta-\alpha'-\beta') + 1)}{\left| \Gamma(2\nu+\alpha+\beta-\alpha'-\beta' + 1)\right|} \frac{\Gamma\left( \Re(\alpha') + \frac{1}{2} \right)}{\left|\Gamma\left( \alpha' + \frac{1}{2} \right) \right|}
\\ & \times \sum_{\ell = 1}^N \left| \frac{\cos(\pi \alpha)}{\cos(\pi \Re(\alpha))} \frac{\Gamma\left( \Re(\beta') + N - \ell + \frac{1}{2} \right)}{\Gamma\left( \beta' + N - \ell + \frac{1}{2} \right)} a_\ell(\Re(\alpha)) a_{N - \ell}(\beta) \left( \frac{\e^{-\xi}}{\sinh \xi} \right)^\ell \left( \frac{\e^{-\xi}}{\cosh \xi} \right)^{N - \ell} \right| \\ & \times \frac{\left| \Gamma(2\nu + \alpha + \beta + 2) \right|}{\Gamma(\Re(2\nu + \alpha + \beta) + N + 2)} \times 
\begin{cases} 
    1, & \text{if }\; 1 \le \Re\left(\e^{2\xi}\right), \\ 
    \left| 1 - \e^{- 2\xi} \right| \left| \csc(2 \Im(\xi)) \right|, & \text{if }\; 0 \le \Re\left(\e^{2\xi}\right) < 1, \\ 
    \left| 1 - \e^{- 2\xi} \right|, & \text{if }\; \Re\left(\e^{2\xi}\right) < 0,
\end{cases}
\end{align*}
provided $\left|\Re(\alpha)\right|< \frac{1}{2}$ and $- \frac{1}{2} < \Re(\beta) < N+\frac{1}{2}$, and
\begin{gather}\label{numericsbound2}
\begin{split}
& \left| \widehat{R}_N^{(Q)}(\nu, \xi, \alpha, \beta) \right| \le \frac{\Gamma(\Re(2\nu+\alpha+\beta-\alpha'-\beta') + 1)}{\left| \Gamma(2\nu+\alpha+\beta-\alpha'-\beta' + 1)\right|} \frac{\Gamma\left( \Re(\alpha') + \frac{1}{2} \right) \Gamma\left( \Re(\beta') + \frac{1}{2} \right)}{\left|\Gamma\left( \alpha' + \frac{1}{2} \right) \Gamma\left( \beta' + \frac{1}{2} \right) \right|}
\\ &\times \sum_{\ell = 0}^N \left| \frac{\cos(\pi \alpha) \cos(\pi \beta)}{\cos(\pi \Re(\alpha)) \cos(\pi \Re(\beta))} a_\ell(\Re(\alpha)) a_{N - \ell}(\Re(\beta)) \left( \frac{\e^{-\xi}}{\sinh \xi} \right)^\ell \left( \frac{\e^{-\xi}}{\cosh \xi} \right)^{N - \ell} \right| 
\\ & \times \frac{\left| \Gamma(2\nu + \alpha + \beta + 2) \right|}{\Gamma(\Re(2\nu + \alpha + \beta) +N + 2)} \times \begin{cases}
   \left| 1 \pm \e^{-2\xi} \right|, & \text{if }\; 1 \le \pm \Re \left( \e^{2\xi} \right), \\ 
   \left| 1 - \e^{-4\xi} \right| \left| \csc (2 \Im (\xi)) \right|, & \text{if }\; \left| \Re \left( \e^{2\xi} \right) \right| < 1,
\end{cases}
\end{split}
\end{gather}
provided $\left|\Re(\alpha)\right|, \left|\Re(\beta)\right| < \frac{1}{2}$. If $2\Re(\alpha)$ or $2\Re(\beta)$ is an odd integer, then the limiting values must to be taken in these bounds. The fractional powers are defined to be positive for positive real $\xi$ and are defined by continuity elsewhere. Furthermore, the remainder term $\widehat{R}_N^{(Q)} (\nu,\xi ,\alpha ,\beta )$ is bounded in absolute value by twice the corresponding first neglected term and retains the same sign, provided that $\xi$ is positive, $\nu$, $\alpha$, and $\beta$ are real, $|\alpha|, |\beta| < \frac{1}{2}$, and $2\nu + \alpha + \beta + 1  > \alpha' + \beta'$.
\end{theorem}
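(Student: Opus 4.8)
The plan is to establish the exact identity \eqref{Qfac} first, with $\widehat{R}_N^{(Q)}$ defined as an explicit integral, and then to estimate that integral in the three parameter regimes. I would start from the hypergeometric representation \eqref{Qhypergeom} and apply Euler's integral to the ${}_2F_1$ with parameters $\nu+1,\ \nu+\beta+1,\ 2\nu+\alpha+\beta+2$ and argument $\cosh^{-2}\xi$. After cancelling the gamma prefactors and the powers of $\cosh\xi$, this yields the single integral
\[
Q_\nu^{(\alpha,\beta)}(\cosh(2\xi)) = \frac{2}{\sinh^{2\alpha}\xi\,\cosh^{2\beta}\xi}\int_0^1 \frac{t^{\nu+\beta}(1-t)^{\nu+\alpha}}{(\cosh^2\xi-t)^{\nu+1}}\id t .
\]
The exponential factor $[t(1-t)/(\cosh^2\xi-t)]^{\nu}$ has its interior maximum at $t_0=\cosh\xi\,\e^{-\xi}$, where the bracket equals $\e^{-2\xi}$, $1-t_0=\sinh\xi\,\e^{-\xi}$ and $\cosh^2\xi-t_0=\sinh\xi\cosh\xi$; this already accounts for the prefactors $(\e^{-\xi}/\sinh\xi)^{\alpha+1/2}(\e^{-\xi}/\cosh\xi)^{\beta+1/2}\e^{-2\nu\xi}$ in \eqref{Qfac}, the normalising beta factor entering through the subsequent beta integrals.

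Following Hahn \cite{Hahn1980}, I would then recast this as a double integral (point (ii) of the Introduction) in which the slowly varying part of the integrand separates into a factor carrying the dependence on $(\alpha,\sinh\xi)$ and one carrying the dependence on $(\beta,\cosh\xi)$. Each is an algebraic function whose finite binomial expansion about the saddle has coefficients $a_\ell(\alpha)$ and $a_{n-\ell}(\beta)$ respectively [cf. \eqref{andef}], together with an integral form for the Taylor remainder. Integrating the power $s^{2\nu+\alpha+\beta+1}$ termwise against the $(1-s)^{n}$ factors produces the beta integrals $B(2\nu+\alpha+\beta+2,n+1)$, hence the factorial ratios $\Gamma(2\nu+\alpha+\beta+2)/\Gamma(2\nu+\alpha+\beta+n+2)$, while the Cauchy product of the two binomial series reproduces exactly $(-1)^n g_n(-\xi,\alpha,\beta)$. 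Truncating at order $N$ leaves $\widehat{R}_N^{(Q)}$ as an explicit integral of the product of the truncated remainder of one factor with the full expansion of the other, or of the remainders of both.

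To obtain the three estimates I would bound this remainder integral according to which factor is carried to its remainder: expanding the $\beta$-factor (requiring $|\Re(\beta)|<\tfrac12$ for convergence and sign control) gives the first bound, expanding the $\alpha$-factor gives the second, and expanding both symmetrically (with $|\Re(\alpha)|,|\Re(\beta)|<\tfrac12$) gives the third. In each case the magnitude of the binomial remainder over the contour is controlled by a geometric quantity of the form $|1\pm\e^{\pm2\xi}|$, and near the branch structure by $|1-\e^{\pm4\xi}||\csc(2\Im(\xi))|$, which accounts for the case distinctions on $\Re(\e^{2\xi})$ and $\Re(\e^{-2\xi})$; these are arranged so that the $\csc(2\Im(\xi))$ factor is absent precisely when $1\le|\Re(\e^{2\xi})|$, keeping the bound finite for real $\xi$. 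The factors $\cos(\pi\alpha)/\cos(\pi\Re(\alpha))$, the gamma-ratios $\Gamma(\Re(\alpha')+\ell+\tfrac12)/|\Gamma(\alpha'+\ell+\tfrac12)|$, and the primed parameters \eqref{abprime} arise from writing $a_n(\mu)=\tfrac{\cos(\pi\mu)}{\pi}\,\Gamma(\tfrac12+\mu+n)\Gamma(\tfrac12-\mu+n)/((-2)^n n!)$ via the reflection formula $\Gamma(\tfrac12+\mu)\Gamma(\tfrac12-\mu)=\pi/\cos(\pi\mu)$, and then bounding the complex gamma magnitudes by their real-part values (valid once the arguments have real part at least $\tfrac12$, which is exactly what $\alpha',\beta'$ enforce when $\Re(\alpha),\Re(\beta)$ leave $(-\tfrac12,\tfrac12)$); the hypothesis $\Re(2\nu+\alpha+\beta+1)>\Re(\alpha'+\beta')$ guarantees convergence of all integrals involved.

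The main obstacle will be this penultimate step: producing sharp, fully explicit bounds for the parameter-dependent binomial remainder uniformly along the integration contour, with the correct case split that keeps the estimate finite for real $\xi$, and the analytic continuation in $\alpha,\beta$ beyond $(-\tfrac12,\tfrac12)$ that introduces $\alpha',\beta'$. Finally, for the sharpened statement with $\xi>0$ and real $\nu,\alpha,\beta$, $|\alpha|,|\beta|<\tfrac12$, one has $t_0\in(0,1)$, the integrand is real and one-signed, and for $|\mu|<\tfrac12$ the coefficients $a_n(\mu)$ have a fixed sign pattern, so the remainder integral inherits the sign of the first neglected term; the factor of two reflects that the symmetric remainder collects one error contribution from each of the two factors. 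The asymptotic expansion \eqref{Qfacasymp} of Theorem \ref{thm3} then follows immediately from these bounds as $\nu\to\infty$.
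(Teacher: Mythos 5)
Your proposal is correct and follows essentially the same route as the paper: the paper likewise passes from the representation \eqref{Qhypergeom} to Hahn's double integral \eqref{QHahn}, \eqref{Ydef}, expands the two binomial factors via beta-integral representations of the Pochhammer ratios with explicit integral remainders (Lemma \ref{lemma2}), identifies $\widehat{R}_N^{(Q)}$ with the double-integral remainder \eqref{Rhatexpr}, and derives the three estimates by carrying the remainder in the $\alpha$-factor, the $\beta$-factor, or both, with $\alpha'$, $\beta'$ and the $\cos(\pi\alpha)/\cos(\pi\Re(\alpha))$ ratios arising exactly as you describe. The one imprecision is your account of the factor $2$ in the final sign statement: in the paper it comes from bounding $(1-tuw)^{-1}\le 2$ in the remainder integral of the $\cosh$-factor, where $0<w=\e^{-\xi}/(2\cosh\xi)\le\tfrac12$ for $\xi>0$ (the $\sinh$-factor contributes no such factor), rather than from collecting one error contribution from each of the two factors.
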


The proofs of Theorems \ref{Pfacthm} and \ref{Qfacthm} are provided in Section \ref{fproof1}.

In the following two theorems, we present alternative, less explicit error bounds for the factorial expansions \eqref{Pfacasymp} and \eqref{Qfacasymp} of the Jacobi functions $P_\nu^{(\alpha, \beta)}(z)$ and $Q_\nu^{(\alpha, \beta)}(z)$. In these theorems, the function $\mathsf{g}(w)$ is defined as follows:
\begin{equation}\label{gdef}
\mathsf{g}(w)=\begin{cases} 1  & \text{if }\;
\Re(w) \le  0, \\ \left|\frac{w}{\Im (w)}\right| & \text{if }\;
0<\Re (w) \le  \left| w \right|^2, \\  \frac{1}{|1-w|} & \text{if }\;  \Re(w)> \left| w \right|^2,\end{cases}
\end{equation}
for $w\in \mathbb{C}\setminus [1,+\infty)$ (cf. \cite[Eq. (5.5)]{Hahn1980}). In the special case where $\nu$, $\alpha$, and $\beta$ are real and $|\alpha|, |\beta| < \frac{1}{2}$, Theorem \ref{Pfacthm2} was also proved by Hahn \cite{Hahn1980}.

\begin{theorem}\label{Pfacthm2} Let $\xi\in \mathcal{D}\setminus \mathbb{R}^+$ and let $N$, $M$ both be arbitrary non-negative integers. Suppose that $\nu$, $\alpha$, and $\beta$ are complex numbers satisfying $\Re(2\nu + \alpha + \beta + 1) > \Re(\alpha' + \beta')$, and that either $0 < \Re(\alpha' + \beta')$ or $\alpha = \beta = 0$. Then the remainder terms in \eqref{Pfac} satisfy the following estimates:
\begin{gather}\label{Hahn1}
\begin{split}
& \left|\widehat{R}_N^{(P_1)}(\nu, \xi, \alpha, \beta)\right| \le \frac{\Gamma(\Re(2\nu + \alpha + \beta - \alpha' - \beta') + 1)}{\left| \Gamma(2\nu + \alpha + \beta - \alpha' - \beta' + 1) \right|}
\frac{\left| \Gamma(-\alpha' - \beta' + N + 1) \right|}{\Gamma(\Re(-\alpha' - \beta') + N + 1)} \\ 
&  \times \sum_{\ell = 0}^N \left| \frac{\cos(\pi \alpha) \cos(\pi \beta)}{\cos(\pi \Re(\alpha)) \cos(\pi \Re(\beta))}  a_\ell (\Re(\alpha)) a_{N - \ell} (\Re(\beta)) \left( \frac{\e^\xi}{\sinh \xi} \right)^\ell \left( \frac{\e^\xi}{\cosh \xi} \right)^{N - \ell} \right| \\ 
&  \times \frac{\left|\Gamma(2\nu + \alpha + \beta + 2)\right|}{\Gamma(\Re(2\nu + \alpha + \beta) + N + 2)} \times \max_{0 \le u \le \frac{\pi}{2}} \mathsf{g}\left( \frac{\sin^2 u}{1 + \sin(2u)} \frac{\e^\xi}{2 \sinh \xi} + \frac{\cos^2 u}{1 + \sin(2u)} \frac{\e^\xi}{2 \cosh \xi} \right),
\end{split}
\end{gather}
and
\begin{gather}\label{Hahn2}
\begin{split}
& \left|\widehat{R}_M^{(P_2)}(\nu, \xi, \alpha, \beta)\right| \le \frac{\Gamma(\Re(2\nu + \alpha + \beta - \alpha' - \beta') + 1)}{\left| \Gamma(2\nu + \alpha + \beta - \alpha' - \beta' + 1) \right|}
\frac{\left| \Gamma(-\alpha' - \beta' + M + 1) \right|}{\Gamma(\Re(-\alpha' - \beta') + M + 1)} \\ 
&  \times \sum_{\ell = 0}^M \left| \frac{\cos(\pi \alpha) \cos(\pi \beta)}{\cos(\pi \Re(\alpha)) \cos(\pi \Re(\beta))}  a_\ell (\Re(\alpha)) a_{M - \ell} (\Re(\beta)) \left( \frac{\e^{-\xi}}{\sinh \xi} \right)^\ell \left( \frac{\e^{-\xi}}{\cosh \xi} \right)^{M - \ell} \right| \\ 
&  \times \frac{\left|\Gamma(2\nu + \alpha + \beta + 2)\right|}{\Gamma(\Re(2\nu + \alpha + \beta) + M + 2)} \times \max_{0 \le u \le \frac{\pi}{2}} \mathsf{g}\left(-\frac{\sin^2 u}{1 + \sin(2u)} \frac{\e^{-\xi}}{2 \sinh \xi} + \frac{\cos^2 u}{1 + \sin(2u)} \frac{\e^{-\xi}}{2 \cosh \xi} \right),
\end{split}
\end{gather}
provided that $\left|\Re(\alpha)\right|, \left|\Re(\beta)\right| < \frac{1}{2}$. The fractional powers are defined to be positive for positive real $\xi$ and are defined by continuity elsewhere.
\end{theorem}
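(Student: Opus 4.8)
The plan is to reuse the integral representations of the remainder terms $\widehat{R}_N^{(P_1)}$ and $\widehat{R}_M^{(P_2)}$ constructed in Section~\ref{fproof1} during the proof of Theorem~\ref{Pfacthm}, and to estimate them with Hahn's auxiliary function $\mathsf{g}$ from \eqref{gdef} in place of the cruder, regime-by-regime bounds employed there. Those representations stem from the double-integral reformulation of the hypergeometric function in \eqref{Qhypergeom}, together with the decomposition of $P_\nu^{(\alpha,\beta)}(\cosh(2\xi))$ into its two solution components via \eqref{PQrel}. After truncating the Watson-type expansion of the two Bessel factors at order $N$ (respectively $M$) and collecting the coefficients into the $a_\ell(\alpha)a_{N-\ell}(\beta)$ pattern, the remainder acquires the shape of a double integral: an outer angular variable $u \in [0,\tfrac{\pi}{2}]$ that blends the $\sinh\xi$- and $\cosh\xi$-contributions of the two parameters, and an inner variable $\tau \in [0,1]$ whose kernel carries the single factor $(1 - w\tau)^{-1}$ produced by the truncation step, where
\[
w = w(u) = \frac{\sin^2 u}{1 + \sin(2u)}\,\frac{\e^{\xi}}{2\sinh\xi} + \frac{\cos^2 u}{1 + \sin(2u)}\,\frac{\e^{\xi}}{2\cosh\xi}
\]
for $\widehat{R}_N^{(P_1)}$, with the first weight negated and $\e^{\xi}$ replaced by $\e^{-\xi}$ for $\widehat{R}_M^{(P_2)}$.

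The first step is to record that, on minimising $|1 - w\tau|^2 = 1 - 2\Re(w)\tau + |w|^2\tau^2$ over $\tau \in [0,1]$, the function in \eqref{gdef} is precisely
\[
\mathsf{g}(w) = \sup_{0 \le \tau \le 1}\frac{1}{\left|1 - w\tau\right|},\qquad w \in \mathbb{C}\setminus[1,+\infty).
\]
Since $\xi \in \mathcal{D}\setminus\mathbb{R}^+$ forces $\Im(\xi)\ne 0$, the argument $w(u)$ stays off the cut $[1,+\infty)$ for every $u$, so $\mathsf{g}(w(u))$ is finite. I would then bound the inner integral by pulling out $|1 - w(u)\tau|^{-1} \le \mathsf{g}(w(u))$, replace $\mathsf{g}(w(u))$ by $\max_{0\le u\le\frac{\pi}{2}}\mathsf{g}(w(u))$, and evaluate the remaining $\mathsf{g}$-free double integral in closed form. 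The essential point is that the argument of $\mathsf{g}$ is independent of both the summation index $\ell$ and the truncation order, so this single geometric factor controls the entire remainder uniformly; this is the hallmark of Hahn's bound and the reason the $\max_u \mathsf{g}$ term and the $\ell$-sum appear as separate factors in \eqref{Hahn1} and \eqref{Hahn2}.

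Evaluating the residual integral reproduces the gamma-function ratios and the coefficient sum $\sum_\ell a_\ell(\Re(\alpha))a_{N-\ell}(\Re(\beta))(\e^{\xi}/\sinh\xi)^\ell(\e^{\xi}/\cosh\xi)^{N-\ell}$ displayed in \eqref{Hahn1} and \eqref{Hahn2}, the factor $|\Gamma(-\alpha'-\beta'+N+1)|/\Gamma(\Re(-\alpha'-\beta')+N+1)$ arising from the Beta integral of the order-$N$ Taylor kernel; its convergence is exactly what the hypotheses $0 < \Re(\alpha'+\beta')$ (or the degenerate case $\alpha=\beta=0$) and $\Re(2\nu+\alpha+\beta+1) > \Re(\alpha'+\beta')$ secure, with $\alpha',\beta'$ chosen as in \eqref{abprime}. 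To descend from the complex parameters to the real-part coefficients, I would majorise each $|(\tfrac12+\alpha)_\ell(\tfrac12-\alpha)_\ell|$ and its $\beta$-analogue by the corresponding real-part product, which introduces the ratios $\cos(\pi\alpha)/\cos(\pi\Re(\alpha))$ and $\cos(\pi\beta)/\cos(\pi\Re(\beta))$ through the reflection formula; the restriction $|\Re(\alpha)|,|\Re(\beta)| < \tfrac12$ keeps these cosines nonzero.

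The main obstacle is assembling the remainder integral in precisely the form that isolates a single factor $(1 - w\tau)^{-1}$ with the stated argument $w(u)$: one must verify that the two Bessel contributions recombine through the weights $\tfrac{\sin^2 u}{1+\sin(2u)}$ and $\tfrac{\cos^2 u}{1+\sin(2u)}$ exactly as written, and that the angular variable can be introduced so that the supremum bound via $\mathsf{g}$ is simultaneously valid and uniform in the truncation index. Once this representation is secured, the remaining estimates are routine, and specialising to real $\nu,\alpha,\beta$ recovers Hahn's original bound \cite{Hahn1980}.
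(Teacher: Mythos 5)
Your high-level strategy does match the paper's: reuse the double-integral remainder representations \eqref{Rhatexpr0} and \eqref{Rhatexpr} from Section \ref{fproof1}, bound the truncation kernel by Hahn's function $\mathsf{g}$ (your observation that $\mathsf{g}(w)=\max_{0\le\tau\le1}\left|1-\tau w\right|^{-1}$ is exactly the inequality underlying \eqref{Mbound}), extract the gamma ratios from Beta integrals, and pass to real-part parameters to pick up the cosine ratios. But there is a genuine gap at what you yourself call ``the main obstacle'', and it is not a verification detail --- it is the substance of the proof. First, the remainders in \eqref{Pfac} do not come from truncating Watson-type expansions of Bessel factors; that is the mechanism of Sections \ref{infproof1}--\ref{infproof2} for the \emph{inverse} factorial expansions. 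Here they arise from truncating the binomial expansion \eqref{binomprod} of the product $\bigl(1 \mp ts\,\e^{\pm\xi}/(2\sinh\xi)\bigr)^{\alpha'-1/2}\bigl(1-t(1-s)\,\e^{\pm\xi}/(2\cosh\xi)\bigr)^{\beta'-1/2}$ inside Hahn's double integral \eqref{Ydef}. Because there are \emph{two} binomial factors with \emph{different} arguments, no single kernel $(1-w\tau)^{-1}$ with a blended argument occurs in the representation; manufacturing one is precisely the content of Lemma \ref{lemma3}, which (following Hahn's Lemma 3) fuses the two binomials through a Beta-integral (Feynman-type) parametrisation with an auxiliary parameter $q$, and this is exactly where the hypotheses $\Re(A),\Re(B)>0$ and $\Re(A+B)<1$ --- i.e.\ $\left|\Re(\alpha)\right|,\left|\Re(\beta)\right|<\tfrac12$ and $0<\Re(\alpha'+\beta')$, with $\alpha=\beta=0$ recovered by continuity --- are consumed. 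Your proposal asserts the blended-kernel form exists and moves on, thereby assuming the crux.

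Second, even granting Lemma \ref{lemma3}, the trigonometric weights $\sin^2u/(1+\sin(2u))$ and $\cos^2u/(1+\sin(2u))$ are not part of any integral representation, contrary to your proposed set-up. Inserting \eqref{Hahnbound} into \eqref{Rhatineq1} produces a \emph{three}-parameter maximum $\max_{0\le q,t,s\le1}\mathsf{g}\bigl(qst\,\e^{\xi}/(2\sinh\xi)+(1-q)(1-s)t\,\e^{\xi}/(2\cosh\xi)\bigr)$; collapsing it to the one-parameter maximum over $u\in[0,\pi/2]$ is a separate geometric argument --- the weight pairs $(qst,(1-q)(1-s)t)$ fill a region whose outer boundary is the curve $(r^2,(1-r)^2)$, reparametrised trigonometrically via $r=\sin u/(\sin u+\cos u)$, and one must know that the maximum of $\mathsf{g}$ over the region is attained on that curve --- for which the paper invokes Hahn's Eqs.\ (16.6), (16.7) and (16.17). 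Your outline posits the final answer's shape as the starting representation, so both of these steps are silently assumed; you also leave unproved the assertion that the argument of $\mathsf{g}$ avoids the cut $[1,+\infty)$ (the two blended terms have imaginary parts of opposite sign when $\Im(\xi)\neq0$, so their nonnegative combinations can be real, and ruling out $[1,+\infty)$ requires Hahn's analysis rather than the one-line remark you give). With Lemma \ref{lemma3} and the maximisation reduction supplied, the remaining bookkeeping in your proposal does go through as in the paper.
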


\begin{theorem}\label{Qfacthm2} Let $\xi\in \mathcal{D}$ and let $N$ be an arbitrary non-negative integer. Suppose that $\nu$, $\alpha$, and $\beta$ are complex numbers satisfying $\Re(2\nu + \alpha + \beta + 1) > \Re(\alpha' + \beta')$, and that either $0 < \Re(\alpha' + \beta')$ or $\alpha = \beta = 0$. Then the remainder term in \eqref{Qfac} satisfies the following estimate:
\begin{gather}\label{Hahn3}
\begin{split}
& \left|\widehat{R}_N^{(Q)}(\nu, \xi, \alpha, \beta)\right| \le \frac{\Gamma(\Re(2\nu + \alpha + \beta - \alpha' - \beta') + 1)}{\left| \Gamma(2\nu + \alpha + \beta - \alpha' - \beta' + 1) \right|}
\frac{\left| \Gamma(-\alpha' - \beta' + N + 1) \right|}{\Gamma(\Re(-\alpha' - \beta') + N + 1)} \\ 
&  \times \sum_{\ell = 0}^N \left| \frac{\cos(\pi \alpha) \cos(\pi \beta)}{\cos(\pi \Re(\alpha)) \cos(\pi \Re(\beta))}  a_\ell (\Re(\alpha)) a_{N - \ell} (\Re(\beta)) \left( \frac{\e^{-\xi}}{\sinh \xi} \right)^\ell \left( \frac{\e^{-\xi}}{\cosh \xi} \right)^{N - \ell} \right| \\ 
&  \times \frac{\left|\Gamma(2\nu + \alpha + \beta + 2)\right|}{\Gamma(\Re(2\nu + \alpha + \beta) + N + 2)} \times \max_{0 \le u \le \frac{\pi}{2}} \mathsf{g}\left(-\frac{\sin^2 u}{1 + \sin(2u)} \frac{\e^{-\xi}}{2 \sinh \xi} + \frac{\cos^2 u}{1 + \sin(2u)} \frac{\e^{-\xi}}{2 \cosh \xi} \right),
\end{split}
\end{gather}
provided that $\left|\Re(\alpha)\right|, \left|\Re(\beta)\right| < \frac{1}{2}$. The fractional powers are defined to be positive for positive real $\xi$ and are defined by continuity elsewhere.
\end{theorem}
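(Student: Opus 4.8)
The plan is to start from the exact integral representation of the remainder $\widehat{R}_N^{(Q)}(\nu,\xi,\alpha,\beta)$ that is derived in the proof of Theorem~\ref{Qfacthm} in Section~\ref{fproof1}. That representation originates from the hypergeometric form \eqref{Qhypergeom}: applying an Euler-type integral to the ${}_2F_1$ and following Hahn's idea of recasting it as a double integral, one expands a binomial factor to order $N$ and is left with an exact remainder expressed as an integral over $u \in [0,\tfrac{\pi}{2}]$ together with an auxiliary variable. For the present bound I would recombine the two parameter integrations (those carrying the $\alpha$- and $\beta$-dependence separately) into a single integration; this is exactly where the hypothesis that either $0 < \Re(\alpha' + \beta')$ or $\alpha = \beta = 0$ is needed, since it guarantees convergence of the combined integral and produces the joint factor $\Gamma(-\alpha' - \beta' + N + 1)$ rather than the separate $\Gamma(\alpha' + \tfrac{1}{2})\Gamma(\beta' + \tfrac{1}{2})$ factors of Theorem~\ref{Qfacthm}. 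The integrand then contains a single resolvent factor of the form $(1 - w(u) s)^{-1}$, where $s$ runs over $[0,1]$ and $w(u)$ is precisely the combination $-\frac{\sin^2 u}{1 + \sin(2u)} \frac{\e^{-\xi}}{2 \sinh \xi} + \frac{\cos^2 u}{1 + \sin(2u)} \frac{\e^{-\xi}}{2 \cosh \xi}$.

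The central estimate is the elementary identity $\mathsf{g}(w) = \sup_{0 \le s \le 1} \lvert 1 - w s \rvert^{-1}$ for $w \in \mathbb{C} \setminus [1, +\infty)$. I would prove this by minimising $\lvert 1 - w s \rvert^2 = (1 - \Re(w) s)^2 + \Im(w)^2 s^2$ over $s \in [0,1]$: the unconstrained minimiser is $s^\ast = \Re(w)/\lvert w \rvert^2$, which lands outside $[0,1]$ exactly in the regimes $\Re(w) \le 0$ and $\Re(w) > \lvert w \rvert^2$, yielding the three branches of \eqref{gdef}. Using this, I would bound the resolvent factor in the integrand pointwise by $\mathsf{g}(w(u))$, then by $\max_{0 \le u \le \pi/2} \mathsf{g}(w(u))$, and pull this maximum outside the integral.

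After removing the resolvent, the remaining integral no longer involves the large parameter in an essential way and reduces to Beta-type integrals evaluating to products of gamma functions; bounding complex gamma arguments by their real parts via $\lvert \Gamma(z) \rvert \le \Gamma(\Re(z))$ (for $\Re(z) > 0$) then produces the ratios $\frac{\Gamma(\Re(2\nu + \alpha + \beta - \alpha' - \beta') + 1)}{\lvert \Gamma(2\nu + \alpha + \beta - \alpha' - \beta' + 1) \rvert}$, $\frac{\lvert \Gamma(-\alpha' - \beta' + N + 1) \rvert}{\Gamma(\Re(-\alpha' - \beta') + N + 1)}$, and $\frac{\lvert \Gamma(2\nu + \alpha + \beta + 2) \rvert}{\Gamma(\Re(2\nu + \alpha + \beta) + N + 2)}$. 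To reach the coefficient sum $\sum_{\ell = 0}^{N} a_\ell(\Re(\alpha)) a_{N - \ell}(\Re(\beta)) (\cdots)$ I would also use the bound $\lvert a_n(\mu) \rvert \le \frac{\lvert \cos(\pi \mu) \rvert}{\lvert \cos(\pi \Re(\mu)) \rvert} \lvert a_n(\Re(\mu)) \rvert$, which follows from the reflection-formula representation $a_n(\mu) = \frac{\cos(\pi \mu)}{\pi (-2)^n n!} \Gamma(\tfrac{1}{2} + \mu + n) \Gamma(\tfrac{1}{2} - \mu + n)$ combined with $\lvert \Gamma \rvert \le \Gamma(\Re)$ once more. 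This is the source of the cosine quotients $\frac{\cos(\pi \alpha) \cos(\pi \beta)}{\cos(\pi \Re(\alpha)) \cos(\pi \Re(\beta))}$ in \eqref{Hahn3}, and the restriction $\lvert \Re(\alpha) \rvert, \lvert \Re(\beta) \rvert < \tfrac{1}{2}$ keeps these cosines nonzero. The estimate \eqref{Hahn2} for the $P$-function (Theorem~\ref{Pfacthm2}) is obtained in exactly the same way, starting instead from the corresponding remainder of \eqref{Pfac}.

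The main obstacle I anticipate is bookkeeping at the level of the integral representation itself: correctly recombining the two parameter integrations into the single resolvent form and verifying that its argument is exactly $w(u)$, so that the piecewise function $\mathsf{g}$ — rather than a cruder constant — captures the supremum over $s$. A secondary difficulty is the choice of signs in $\alpha', \beta'$: one must select them, as permitted by \eqref{abprime} when $\lvert \Re(\alpha) \rvert, \lvert \Re(\beta) \rvert < \tfrac{1}{2}$, so that $\Re(\alpha' + \beta') > 0$ and the combined integral converges, while keeping all gamma quotients finite. The degenerate case $\alpha = \beta = 0$, for which no strictly positive choice exists, must be handled by a separate limiting argument, and the odd-half-integer values of $2\Re(\alpha)$ or $2\Re(\beta)$, where a cosine denominator vanishes, require taking the indicated limits in the bound.
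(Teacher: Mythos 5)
Your route is essentially the paper's own: the remainder in \eqref{Qfac} equals $\widehat{R}_N^{(P_2)}(\nu,\xi,\alpha,\beta)$, so one works from the double-integral representation \eqref{Rhatexpr} of Section \ref{fproof1}; the two binomial factors in \eqref{binomprod} are recombined (via a beta-function representation of $(A)_\ell(B)_{n-\ell}/(\ell!\,(n-\ell)!)$) into a single binomial remainder of order $A+B$ with $A=\tfrac12-\alpha'$, $B=\tfrac12-\beta'$, which is exactly where the joint factor $\Gamma(-\alpha'-\beta'+N+1)$ and the convergence requirement $\Re(\alpha'+\beta')>0$ (i.e.\ $\Re(A+B)<1$) come from; the resolvent is controlled by $\mathsf{g}$, the remaining beta integrals produce the gamma ratios, and the cosine quotients come from the reflection formula together with $|\Gamma(z)|\le\Gamma(\Re z)$. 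This is precisely the paper's Lemma \ref{lemma3} (adapted from Hahn) combined with \eqref{Rhatineq2}, your identity $\mathsf{g}(w)=\sup_{0\le s\le 1}|1-ws|^{-1}$ is the correct elementary core, and your continuity treatment of $\alpha=\beta=0$ matches the paper as well.

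The genuine gap is in how the parametrisation by $u\in[0,\tfrac{\pi}{2}]$ enters. It is not true that the exact remainder is ``an integral over $u\in[0,\tfrac{\pi}{2}]$'' whose resolvent argument ``is precisely $w(u)$'': after recombination the resolvent is $(1-\tau z)^{-1}$ with $z=t\left(qs\,a+(1-q)(1-s)\,b\right)$, where $a=-\frac{\e^{-\xi}}{2\sinh\xi}$, $b=\frac{\e^{-\xi}}{2\cosh\xi}$, and $\tau,q,t,s$ are \emph{all} integration variables in $[0,1]$. Your sup-identity disposes of $\tau$, leaving $\max_{q,t,s\in[0,1]}\mathsf{g}(z)$, and reducing this three-parameter maximum to $\max_{0\le u\le\pi/2}\mathsf{g}(w(u))$ is a separate, nontrivial step — the paper dispatches it by citing Hahn's Eqs.\ (16.6), (16.7) and (16.17). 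The step can be filled with your own tools: the sup-characterisation gives the ray-monotonicity $\mathsf{g}(\tau w)\le\mathsf{g}(w)$ for $0\le\tau\le1$, which lets you set $t=1$ and then pass to the outer boundary of the set $\left\{(qs,(1-q)(1-s)):q,s\in[0,1]\right\}=\left\{(X,Y):X,Y\ge 0,\ \sqrt{X}+\sqrt{Y}\le 1\right\}$, and the boundary curve $\sqrt{X}+\sqrt{Y}=1$ is exactly what the parametrisation $X=\sin^2u/(1+\sin 2u)$, $Y=\cos^2u/(1+\sin 2u)$ describes. As written, however, your stated plan of ``verifying that the argument is exactly $w(u)$'' would fail, because that identification is false at the level of the integrand and only emerges after this maximum-reduction argument.
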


The proofs of Theorems \ref{Pfacthm2} and \ref{Qfacthm2} are provided in Section \ref{fproof2}.

Our final theorem provides computable error bounds for the factorial expansions \eqref{Pfacasymp2}, \eqref{Qfacasymp2}, and \eqref{Qfacasymp3} of the Jacobi function $P_\nu^{(\alpha, \beta)}(x)$ and the associated functions $\SzQ_\nu^{(\alpha, \beta)}(x)$ and $\DQ_\nu^{(\alpha, \beta)}(x)$. In the special case where $\nu$, $\alpha$, and $\beta$ are real and $|\alpha|, |\beta| < \frac{1}{2}$, the error bound \eqref{cutfacbound4} for $\widehat{R}_N^{(P)}(\nu, \zeta, \alpha, \beta)$ was also proved by Hahn \cite{Hahn1980}.

\begin{theorem}\label{cutinfacthm2} Let $\xi\in \mathcal{D}$ and let $N$ be an arbitrary non-negative integer. Let $\nu$, $\alpha$, and $\beta$ be complex numbers satisfying $\Re(2\nu + \alpha + \beta + 1) > \Re(\alpha' + \beta')$. Then we have
\begin{gather}\label{Pfac2}
\begin{split}
& \frac{ \pi P_\nu ^{(\alpha ,\beta )} (\cos (2\zeta) )}{2^{2\nu  + \alpha  + \beta +1} B(\nu  + \alpha  + 1,\nu  + \beta  + 1)} = \frac{1}{\sin ^{\alpha  + 1/2} \zeta \cos ^{\beta  + 1/2} \zeta }\\& 
\times\left(\sum_{n = 0}^{N-1}  ( - 1)^n \left(\sum\limits_{\ell = 0}^n \frac{a_\ell (\alpha) a_{n - \ell} (\beta)}{\sin^\ell \zeta \cos^{n- \ell} \zeta} \cos \left(\zeta^{(3)}_{\nu ,n,\ell }\right)\right)\frac{\Gamma (2\nu  + \alpha  + \beta  + 2)}{\Gamma (2\nu  + \alpha  + \beta  + n + 2)} +\widehat{R}_N^{(P)}(\nu, \zeta, \alpha, \beta)\right),
\end{split}
\end{gather}
\begin{gather}\label{Qfac2}
\begin{split}
& \frac{ \SzQ_\nu ^{(\alpha ,\beta )} (\cos (2\zeta) )}{2^{2\nu  + \alpha  + \beta } B(\nu  + \alpha  + 1,\nu  + \beta  + 1)} = \frac{1}{\sin ^{\alpha  + 1/2} \zeta \cos ^{\beta  + 1/2} \zeta } \\& 
\times\left(\sum_{n = 0}^{N-1} ( - 1)^n \left(\sum\limits_{\ell = 0}^n \frac{a_\ell (\alpha) a_{n - \ell} (\beta)}{\sin^\ell \zeta \cos^{n- \ell} \zeta} \cos \left(\zeta^{(4)}_{\nu ,n,\ell }\right)\right)\frac{\Gamma (2\nu  + \alpha  + \beta  + 2)}{\Gamma (2\nu  + \alpha  + \beta  + n + 2)} +\widehat{R}_N^{(\SzQ)}(\nu, \zeta, \alpha, \beta)\right),
\end{split}
\end{gather}
\begin{gather}\label{Qfac3}
\begin{split}
& \frac{ \DQ_\nu ^{(\alpha ,\beta )} (\cos (2\zeta) )}{2^{2\nu  + \alpha  + \beta } B(\nu  + \alpha  + 1,\nu  + \beta  + 1)} = \frac{-1}{\sin ^{\alpha  + 1/2} \zeta \cos ^{\beta  + 1/2} \zeta } \\& 
\times\left(\sum_{n = 0}^{N-1} ( - 1)^n \left(\sum\limits_{\ell = 0}^n \frac{a_\ell (\alpha) a_{n - \ell} (\beta)}{\sin^\ell \zeta \cos^{n- \ell} \zeta} \sin \left(\zeta^{(3)}_{\nu ,n,\ell }\right)\right)\frac{\Gamma (2\nu  + \alpha  + \beta  + 2)}{\Gamma (2\nu  + \alpha  + \beta  + n + 2)} +\widehat{R}_N^{(\DQ)}(\nu, \zeta, \alpha, \beta)\right),
\end{split}
\end{gather}
where $\zeta^{(3)}_{\nu,n,\ell}$ and $\zeta^{(4)}_{\nu,n,\ell}$ are defined in \eqref{zetadef2}, and the remainder terms satisfy the following estimates:
\begin{gather}\label{cutfacbound1}
\begin{split}
& \left|\widehat{R}_N^{(P)}(\nu, \zeta, \alpha, \beta)\right|,\left|\widehat{R}_N^{(\DQ)}(\nu, \zeta, \alpha, \beta)\right| \le 2 \frac{\Gamma \left( \Re(2\nu + \alpha + \beta - \alpha' - \beta') + 1 \right)}{\left| \Gamma \left( 2\nu + \alpha + \beta - \alpha' - \beta' + 1 \right) \right|}
\frac{\Gamma \left( \Re(\alpha') + \frac{1}{2} \right) \Gamma \left( \Re(\beta') + \frac{1}{2} \right)}{\left| \Gamma \left( \alpha' + \frac{1}{2} \right) \Gamma \left( \beta' + \frac{1}{2} \right) \right|}
\\ &\times \left| \frac{\cos(\pi \alpha) \cos(\pi \beta)}{\cos(\pi \Re(\alpha)) \cos(\pi \Re(\beta))} \frac{a_N(\Re(\alpha))}{\sin^N \zeta} \right| \cosh \left(\Im\left( \zeta_{\nu, N, N}^{(3)} \right)\right) 
\frac{\left| \Gamma \left( 2\nu + \alpha + \beta + 2 \right) \right|}{\Gamma \left( \Re(2\nu + \alpha + \beta) + N + 2 \right)}
\\ &+ \frac{\Gamma \left( \Re(2\nu + \alpha + \beta - \alpha' - \beta') + 1 \right)}{\left| \Gamma \left( 2\nu + \alpha + \beta - \alpha' - \beta' + 1 \right) \right|}
\frac{\Gamma \left( \Re(\beta') + \frac{1}{2} \right)}{\left| \Gamma \left( \beta' + \frac{1}{2} \right) \right|}
\\ &\times \sum_{\ell = 0}^{N - 1} \left| \frac{\cos(\pi \beta)}{\cos(\pi \Re(\beta))} 
\frac{\Gamma \left( \Re(\alpha') + \ell + \frac{1}{2} \right)}{\Gamma \left( \alpha' + \ell + \frac{1}{2} \right)} 
\frac{a_\ell(\alpha) a_{N - \ell}(\Re(\beta))}{\sin^\ell \zeta \cos^{N - \ell} \zeta} \right|
\cosh \left(\Im\left( \zeta_{\nu, N, \ell}^{(3)}\right) \right) 
\frac{\left| \Gamma \left( 2\nu + \alpha + \beta + 2 \right) \right|}{\Gamma \left( \Re(2\nu + \alpha + \beta) + N + 2 \right)}
\\ &\times 
\begin{cases} 
     2\cos \zeta, & \text{if }\; 0 < \zeta  < \frac{\pi }{4}, \\ 
    \csc \zeta, & \text{if }\; \frac{\pi }{4} \le \zeta  < \frac{\pi }{2},
\end{cases}
\end{split}
\end{gather}
provided $ - \frac{1}{2} < \Re(\alpha) <N + \frac{1}{2}$ and $\left|\Re(\beta)\right| < \frac{1}{2}$,
\begin{gather}\label{cutfacbound2}
\begin{split}
& \left|\widehat{R}_N^{(P)}(\nu, \zeta, \alpha, \beta)\right|,\left|\widehat{R}_N^{(\DQ)}(\nu, \zeta, \alpha, \beta)\right| \le 2 \frac{\Gamma \left( \Re(2\nu + \alpha + \beta - \alpha' - \beta') + 1 \right)}{\left| \Gamma \left( 2\nu + \alpha + \beta - \alpha' - \beta' + 1 \right) \right|}
\frac{\Gamma \left( \Re(\alpha') + \frac{1}{2} \right) \Gamma \left( \Re(\beta') + \frac{1}{2} \right)}{\left| \Gamma \left( \alpha' + \frac{1}{2} \right) \Gamma \left( \beta' + \frac{1}{2} \right) \right|}
\\ &\times \left| \frac{\cos(\pi \alpha) \cos(\pi \beta)}{\cos(\pi \Re(\alpha)) \cos(\pi \Re(\beta))} \frac{a_N(\Re(\beta))}{\cos^N \zeta} \right| \cosh \left(\Im\left( \zeta_{\nu, N, 0}^{(3)} \right)\right) 
\frac{\left| \Gamma \left( 2\nu + \alpha + \beta + 2 \right) \right|}{\Gamma \left( \Re(2\nu + \alpha + \beta) + N + 2 \right)}
\\ &+ \frac{\Gamma \left( \Re(2\nu + \alpha + \beta - \alpha' - \beta') + 1 \right)}{\left| \Gamma \left( 2\nu + \alpha + \beta - \alpha' - \beta' + 1 \right) \right|}
\frac{\Gamma \left( \Re(\alpha') + \frac{1}{2} \right)}{\left| \Gamma \left( \alpha' + \frac{1}{2} \right) \right|}
\\ &\times \sum_{\ell = 1}^{N} \left| \frac{\cos(\pi \alpha)}{\cos(\pi \Re(\alpha))} 
\frac{\Gamma \left( \Re(\beta') + \ell + \frac{1}{2} \right)}{\Gamma \left( \beta' + \ell + \frac{1}{2} \right)} 
\frac{a_\ell(\Re(\alpha)) a_{N - \ell}(\beta)}{\sin^\ell \zeta \cos^{N - \ell} \zeta} \right|
\cosh \left(\Im\left( \zeta_{\nu, N, \ell}^{(3)} \right)\right) 
\frac{\left| \Gamma \left( 2\nu + \alpha + \beta + 2 \right) \right|}{\Gamma \left( \Re(2\nu + \alpha + \beta) + N + 2 \right)}
\\ &\times \begin{cases} 
     \sec\zeta, & \text{if }\; 0 < \zeta  < \frac{\pi }{4}, \\ 
    2\sin \zeta, & \text{if }\; \frac{\pi }{4} \le \zeta  < \frac{\pi }{2},
\end{cases}
\end{split}
\end{gather}
provided $\left|\Re(\alpha)\right|< \frac{1}{2}$ and $- \frac{1}{2} < \Re(\beta) < N+\frac{1}{2}$,
\begin{gather}\label{cutfacbound3}
\begin{split}
& \left|\widehat{R}_N^{(P)}(\nu, \zeta, \alpha, \beta)\right|,\left|\widehat{R}_N^{(\DQ)}(\nu, \zeta, \alpha, \beta)\right|\leq 2 \frac{\Gamma \left( \Re(2\nu + \alpha + \beta - \alpha' - \beta') + 1 \right)}{\left| \Gamma \left( 2\nu + \alpha + \beta - \alpha' - \beta' + 1 \right) \right|}
\frac{\Gamma \left( \Re(\alpha') + \frac{1}{2} \right) \Gamma \left( \Re(\beta') + \frac{1}{2} \right)}{\left| \Gamma \left( \alpha' + \frac{1}{2} \right) \Gamma \left( \beta' + \frac{1}{2} \right) \right|}
\\ &\times \sum_{\ell = 0}^N \left| \frac{\cos(\pi \alpha) \cos(\pi \beta)}{\cos(\pi \Re(\alpha)) \cos(\pi \Re(\beta))} 
\frac{a_\ell(\Re(\alpha)) a_{N - \ell}(\Re(\beta))}{\sin^\ell \zeta \cos^{N - \ell} \zeta} \right| 
\cosh \left(\Im\left( \zeta_{\nu, N, \ell}^{(3)} \right)\right) 
\\ &\times \frac{\left| \Gamma \left( 2\nu + \alpha + \beta + 2 \right) \right|}{\Gamma \left( \Re(2\nu + \alpha + \beta) + N + 2 \right)},
\end{split}
\end{gather}
provided $\left|\Re(\alpha)\right|, \left|\Re(\beta)\right| < \frac{1}{2}$, and
\begin{gather}\label{cutfacbound4}
\begin{split}
& \left|\widehat{R}_N^{(P)}(\nu, \zeta, \alpha, \beta)\right|,\left|\widehat{R}_N^{(\DQ)}(\nu, \zeta, \alpha, \beta)\right|\leq 2 \frac{\Gamma \left( \Re(2\nu + \alpha + \beta - \alpha' - \beta') + 1 \right)}{\left| \Gamma \left( 2\nu + \alpha + \beta - \alpha' - \beta' + 1 \right) \right|}
\frac{\left| \Gamma(-\alpha' - \beta' + N + 1) \right|}{\Gamma(\Re(-\alpha' - \beta') + N + 1)}
\\ &\times \sum_{\ell = 0}^N \left| \frac{\cos(\pi \alpha) \cos(\pi \beta)}{\cos(\pi \Re(\alpha)) \cos(\pi \Re(\beta))} 
\frac{a_\ell(\Re(\alpha)) a_{N - \ell}(\Re(\beta))}{\sin^\ell \zeta \cos^{N - \ell} \zeta} \right| 
\cosh \left(\Im\left( \zeta_{\nu, N, \ell}^{(3)} \right)\right) 
\\ &\times \frac{\left| \Gamma \left( 2\nu + \alpha + \beta + 2 \right) \right|}{\Gamma \left( \Re(2\nu + \alpha + \beta) + N + 2 \right)},
\end{split}
\end{gather}
provided that $\left|\Re(\alpha)\right|, \left|\Re(\beta)\right| < \frac{1}{2}$, and that either $0 < \Re(\alpha' + \beta')$ or $\alpha = \beta = 0$. Analogous bounds hold for $\widehat{R}_N^{(\SzQ)}(\nu, \zeta, \alpha, \beta)$ in which each instance of $\zeta^{(3)}_{\nu, N, \ell}$ ($0\le \ell \le N$) is replaced by $\zeta^{(4)}_{\nu, N, \ell}$. If $2\Re(\alpha)$ or $2\Re(\beta)$ is an odd integer, then the limiting values must to be taken in these bounds.
\end{theorem}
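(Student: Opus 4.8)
The plan is to reduce everything to the factorial expansion of $Q_\nu^{(\alpha,\beta)}$ on $\cosh(2\xi)$ via the connection formulas \eqref{PQrel}, \eqref{Qfuncdef} and \eqref{Qfuncdef2}, evaluated on the boundary of $\mathcal{D}$. Since $\cosh(2\xi)$ maps $\mathcal{D}$ biholomorphically onto $\mathbb{C}\setminus(-\infty,1]$ and carries the segment $\Re(\xi)=0$, $0<\Im(\xi)<\frac{\pi}{2}$, onto $(-1,1)$, the point $\xi=\im\zeta$ (approached from $\Re(\xi)>0$) corresponds to $\cos(2\zeta)+\im 0$ and $\xi=-\im\zeta$ to $\cos(2\zeta)-\im 0$. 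First I would verify that the exact identity \eqref{Qfac} of Theorem \ref{Qfacthm}, together with the remainder estimates there and in Theorem \ref{Qfacthm2}, extends by continuity from $\Re(\xi)>0$ to $\xi=\pm\im\zeta$, so that it may be invoked at these boundary points; the fractional powers are fixed by continuity, using $\sinh(\pm\im\zeta)=\pm\im\sin\zeta$ (with argument tending to $\pm\frac{\pi}{2}$) and $\cosh(\pm\im\zeta)=\cos\zeta$.

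Next I would substitute the two boundary expansions into the connection formulas. For $\SzQ_\nu^{(\alpha,\beta)}(\cos(2\zeta))=\tfrac{1}{2}\left(Q_\nu^{(\alpha,\beta)}(\cos(2\zeta)+\im 0)+Q_\nu^{(\alpha,\beta)}(\cos(2\zeta)-\im 0)\right)$, and likewise for $\DQ$ and $P$ carrying the additional phases $\e^{\pm\pi\alpha\im}$, the prefactors $\left(\frac{\e^{-\xi}}{\sinh\xi}\right)^{\alpha+1/2}\left(\frac{\e^{-\xi}}{\cosh\xi}\right)^{\beta+1/2}\e^{-2\nu\xi}$ and the coefficients $g_n(\mp\im\zeta,\alpha,\beta)$ collapse, after routine phase bookkeeping, into the real trigonometric series of \eqref{Pfac2}, \eqref{Qfac2} and \eqref{Qfac3}: the oscillatory phases assemble precisely into $\cos(\zeta^{(3)}_{\nu,n,\ell})$, $\cos(\zeta^{(4)}_{\nu,n,\ell})$ and $\sin(\zeta^{(3)}_{\nu,n,\ell})$, the distinction between $\zeta^{(3)}$ and $\zeta^{(4)}$ being produced exactly by the factors $\e^{\pm\pi\alpha\im}$ that occur for $P$ and $\DQ$ but not for $\SzQ$. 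This matching identifies $\widehat{R}_N^{(P)}$, $\widehat{R}_N^{(\SzQ)}$ and $\widehat{R}_N^{(\DQ)}$ as the corresponding linear combinations of $\widehat{R}_N^{(Q)}(\nu,\pm\im\zeta,\alpha,\beta)$, weighted by the same prefactors and phases.

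Then I would bound these combinations via the triangle inequality, where two simplifications carry the argument. First, at $\xi=\pm\im\zeta$ one has $\Re(\e^{2\xi})=\cos(2\zeta)\in(-1,1)$, so the case distinctions in Theorems \ref{Qfacthm} and \ref{Qfacthm2} collapse according to $\cos(2\zeta)$: using $|1-\e^{\mp 4\im\zeta}|=2\sin(2\zeta)$, $|1+\e^{\mp 2\im\zeta}|=2\cos\zeta$ and $|1-\e^{\mp 2\im\zeta}|=2\sin\zeta$, one checks that $|1-\e^{-4\xi}|\,|\csc(2\Im(\xi))|$ reduces to $2$, while $|1+\e^{-2\xi}|$ (with or without the factor $|\csc(2\Im(\xi))|$) reduces to $2\cos\zeta$ for $0<\zeta<\frac{\pi}{4}$ and to $\csc\zeta$ for $\frac{\pi}{4}\le\zeta<\frac{\pi}{2}$, and symmetrically $|1-\e^{-2\xi}|$ reduces to $\sec\zeta$ or $2\sin\zeta$; these are exactly the constants in \eqref{cutfacbound1}--\eqref{cutfacbound3}. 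Second, the magnitudes of the two combined prefactors are reciprocal, equal to $\e^{\pm\Im(\zeta^{(3)}_{\nu,N,\ell})}$ for $P$ and $\DQ$ and to $\e^{\pm\Im(\zeta^{(4)}_{\nu,N,\ell})}$ for $\SzQ$, so their half-sum is exactly the stated $\cosh$. Since $\zeta$, $N$ and $\ell$ are real, $\Im(\zeta^{(3)}_{\nu,N,\ell})$ and $\Im(\zeta^{(4)}_{\nu,N,\ell})$ are independent of $N$ and $\ell$, so this single hyperbolic factor may be attached to every term of the $Q$-bounds, reproducing \eqref{cutfacbound1}--\eqref{cutfacbound3}; passing the Hahn-type estimate of Theorem \ref{Qfacthm2} through the same steps, with the boundary value of $\max_{0\le u\le\frac{\pi}{2}}\mathsf{g}(\cdots)$ supplying the remaining constant, yields \eqref{cutfacbound4}.

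The step I expect to be the main obstacle is the branch and phase accounting in the second paragraph: one must track the branch of each fractional power as $\xi\to\pm\im\zeta$ from $\Re(\xi)>0$, the $1/\im$-type phases generated by $\sinh(\pm\im\zeta)$, and the connection-formula phases $\e^{\pm\pi\alpha\im}$, and confirm that together they produce real main terms with the precise arguments $\zeta^{(3)}_{\nu,n,\ell}$ and $\zeta^{(4)}_{\nu,n,\ell}$ rather than shifted variants. A subsidiary difficulty is justifying the boundary passage---continuity of both sides of \eqref{Qfac} and convergence of the underlying remainder representation up to $\Re(\xi)=0$---and handling the degenerate cases in which $2\Re(\alpha)$ or $2\Re(\beta)$ is an odd integer, where the quotients $\cos(\pi\alpha)/\cos(\pi\Re(\alpha))$ and the coefficients $a_n(\cdot)$ must be replaced by their limiting values, as the statement indicates.
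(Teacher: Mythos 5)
Your proposal is correct and follows essentially the same route as the paper: substitute the factorial expansion \eqref{Qfac} of $Q_\nu^{(\alpha,\beta)}$ into the boundary connection formulas \eqref{PQrel2}--\eqref{Qrel4}, identify $\widehat{R}_N^{(P)}$, $\widehat{R}_N^{(\SzQ)}$, $\widehat{R}_N^{(\DQ)}$ as phase-weighted combinations of the boundary values of $\widehat{R}_N^{(Q)}$ at $\xi=\pm\im\zeta$, and estimate by the triangle inequality, with the case distinctions collapsing to the constants $2$, $2\cos\zeta$, $\csc\zeta$, $\sec\zeta$, $2\sin\zeta$, the two reciprocal phase moduli averaging to $\cosh\left(\Im\left(\zeta^{(3),(4)}_{\nu,N,\ell}\right)\right)$, and the Hahn-type estimate supplying \eqref{cutfacbound4}. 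The only inessential difference is that the paper justifies the boundary passage by taking the limit inside the double-integral representation \eqref{Rhatexpr} and re-applying Lemmas \ref{lemma2} and \ref{lemma3} directly at $\pm\im\zeta$, rather than by taking limits of the stated bounds of Theorems \ref{Qfacthm} and \ref{Qfacthm2} as you propose; the resulting estimates coincide.
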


The proof of Theorem \ref{cutinfacthm2} is given in Section \ref{fproof3}.

\section{Proofs of Theorems \ref{thm1}--\ref{thm4}}\label{asympproofs}

The validity of the inverse factorial expansions \eqref{Pifacasymp} and \eqref{Qifacasymp} in the special case where $\left|\Re(\alpha)\right|, \left|\Re(\beta)\right| < \frac{1}{2}$, $\Re(\nu) \to +\infty$, and $\Im(\nu)$ is fixed, follows from Theorems \ref{Pinfacthm} and \ref{Qinfacthm}. In what follows, we will extend the region where these expansions are valid. Define
\begin{equation}\label{Dbar}
\overline{\mathcal{D}} = \left\{ \xi :\Re(\xi ) \ge 0,\; \xi\ne 0,\; \left| \Im(\xi ) \right| < \tfrac{\pi }{2} \right\}.
\end{equation}
Note that $\cosh(2\xi)$ maps $\overline{\mathcal{D}}$ bijectively onto the region $\mathbb{C} \setminus \left(-\infty, 1\right]$, along with both sides of the open segment $(0, 1)$ on the cut along $\left(-\infty, 1\right]$. Using the results of Watson \cite[\S3--9]{Watson1918} applied to the hypergeometric functions in \eqref{Pdef} and \eqref{Qdef}, we find that for $\xi \in \overline{\mathcal{D}}$ and $\alpha, \beta \in \mathbb{C}$, the Jacobi functions can be represented as
\begin{gather}\label{Watson1}
\begin{split}
2^{2\nu + \alpha + \beta + 1} & B(\nu + 1, \nu + \alpha + \beta + 1)  P_\nu^{(\alpha, \beta)} (\cosh (2\xi)) =\\ & \left( \frac{\e^\xi}{\sinh \xi} \right)^{\alpha +1/2} \left( \frac{\e^\xi}{\cosh \xi} \right)^{\beta +1/2} \e^{2\nu \xi} I_+ 
\pm \im \e^{\pm \pi \im \alpha} \left( \frac{\e^{-\xi}}{\sinh \xi} \right)^{\alpha +1/2} \left( \frac{\e^{-\xi}}{\cosh \xi} \right)^{\beta +1/2} \e^{-2\nu \xi} I_-,
\end{split}
\end{gather}
and
\[
\frac{2^{2\nu + \alpha + \beta + 1}}{\pi} B(\nu + 1, \nu + \alpha + \beta + 1) Q_\nu^{(\alpha, \beta)} (\cosh (2\xi)) = \left( \frac{\e^{-\xi}}{\sinh \xi} \right)^{\alpha +1/2} \left( \frac{\e^{-\xi}}{\cosh \xi} \right)^{\beta +1/2} \e^{-2\nu \xi} I_-,
\]
where, for fixed $\alpha$ and $\beta$, the functions $I_\pm$ have asymptotic expansions of the form
\begin{equation}\label{Iasymp}
I_\pm \sim \sum_{n = 0}^\infty \frac{c_n \left(\e^{\pm \xi}\right)}{\Gamma (2\nu + \alpha + \beta + 2)} \frac{2^{2\nu + \alpha + \beta}}{\pi} \Gamma \left( n + \tfrac{1}{2} \right) \frac{\Gamma (\nu + 1) \Gamma (\nu + \alpha + \beta + 1)}{\nu^{n + 1/2}}
\end{equation}
as $\nu\to\infty$, within the sector $\left|\arg \nu \right|\le \frac{\pi}{2}-\delta$ ($<\frac{\pi}{2}$) (for $I_+$) and within $\left|\arg \nu \right|\le \pi-\delta$ ($<\pi$) (for $I_-$). In \eqref{Iasymp}, we assume that $\xi$ is bounded away from the points $0, \pm \frac{\pi}{2}\im$. The coefficients $c_n(w)$ are rational functions of $w$ and polynomial functions of $\alpha$ and $\beta$. In the expansion \eqref{Watson1}, the upper or lower sign is taken according as $\Im(\xi)\gtrless 0$. Watson does not specify the choice of sign when $\Im(\xi) = 0$, noting that the term $\e^{-2\nu \xi} I_-$ is exponentially small compared to $\e^{2\nu \xi} I_+$ and thus can be neglected. To derive an expression for $P_\nu^{(\alpha, \beta)} (\cosh (2\xi))$ when $\Im(\xi) = 0$, we use
\[
P_\nu^{(\alpha, \beta)} (\cosh (2\xi)) = \lim_{\varepsilon \to 0^+} \tfrac{1}{2} \left( P_\nu^{(\alpha, \beta)} (\cosh (2\xi + \im \varepsilon)) + P_\nu^{(\alpha, \beta)} (\cosh (2\xi - \im \varepsilon)) \right)
\]
and obtain
\begin{align*}
2^{2\nu + \alpha + \beta + 1} & B(\nu + 1, \nu + \alpha + \beta + 1)  P_\nu^{(\alpha, \beta)} (\cosh (2\xi)) =\\ & \left( \frac{\e^\xi}{\sinh \xi} \right)^{\alpha +1/2} \left( \frac{\e^\xi}{\cosh \xi} \right)^{\beta +1/2} \e^{2\nu \xi} I_+ 
-C(\xi,\alpha) \left( \frac{\e^{-\xi}}{\sinh \xi} \right)^{\alpha + 1/2} \left( \frac{\e^{-\xi}}{\cosh \xi} \right)^{\beta + 1/2} \e^{-2\nu \xi} I_-,
\end{align*}
where $C(\xi,\alpha)$ is defined in \eqref{Cdef}. This is consistent with both \eqref{Pinfac} and the fact that $P_\nu^{(\alpha, \beta)} (\cosh (2\xi))$ is real when $\xi$, $\nu$, $\alpha$, and $\beta$ are real. For any fixed complex $h$, the gamma function has an asymptotic expansion given by
\begin{equation}\label{Gamma}
\Gamma (\nu+h) \sim \sqrt{2\pi}\nu ^{\nu+h-1/2}\e^{-\nu}\left(1+\frac{6h^2-6h+1}{12\nu} + \frac{36h^4-120h^3+120h^2-36h+1}{288\nu^2}+\ldots\right),
\end{equation}
as $\nu\to\infty$ in the sector $\left|\arg \nu \right|\le \pi-\delta$ ($<\pi$). This result follows from exponentiating the well-known asymptotic expansion of $\log \Gamma(\nu + h)$ \cite[\href{http://dlmf.nist.gov/5.11.E8}{Eq. (5.11.8)}]{NIST:DLMF}. Using this expansion, we can verify the inverse factorial expansion
\[
\frac{2^{2\nu + \alpha + \beta}}{\pi} \Gamma\left(n + \tfrac{1}{2}\right) \frac{\Gamma(\nu + 1) \Gamma(\nu + \alpha + \beta + 1)}{\nu^{n + \frac{1}{2}}} \sim \sum_{k = 0}^{\infty} r_{n,k} \Gamma(2\nu + \alpha + \beta - n - k + 1),
\]
as $\nu\to\infty$ in the sector $\left|\arg \nu \right|\le \pi-\delta$ ($<\pi$), where the coefficients $r_{n,k}$ are polynomials in $\alpha$ and $\beta$. The first two are explicitly given by
\[
r_{n,0} = \frac{(2n)!}{2^n n!}, \quad r_{n,1} = \frac{(2n)!}{2^{n+2} n!} \left( 2(\alpha + \beta)^2 + (2n + 1)(1 + 2\alpha + 2\beta) - 2n^2 \right).
\]
Substituting into \eqref{Iasymp} gives
\[
I_\pm \sim \sum_{n=0}^{\infty} \left( \sum_{k=0}^{n} r_{n-k, k}\, c_{n-k}\left(\e^{\pm\xi}\right) \right) \frac{\Gamma(2\nu + \alpha + \beta - n + 1)}{\Gamma(2\nu + \alpha + \beta + 2)},
\]
as $\nu\to\infty$, within the sector $\left|\arg \nu \right|\le \frac{\pi}{2}-\delta$ ($<\frac{\pi}{2}$) (for $I_+$) and within $\left|\arg \nu \right|\le \pi-\delta$ ($<\pi$) (for $I_-$). Theorem \ref{Pinfacthm} and the uniqueness of the coefficients of an asymptotic expansion imply that
\begin{equation}\label{coeffidentity}
g_n(\mp\xi,\alpha,\beta)=\sum_{k=0}^{n} r_{n-k, k}\, c_{n-k}\left(\e^{\pm\xi}\right)
\end{equation}
when $\xi\in\mathcal{D}$ and $\left|\Re(\alpha)\right|, \left|\Re(\beta)\right| < \frac{1}{2}$. Since both sides are rational functions of $\e^{\pm\xi}$ and polynomial functions of $\alpha$ and $\beta$, an analytic continuation and continuity argument shows that \eqref{coeffidentity} holds for $\xi \in \overline{\mathcal{D}}$ and arbitrary $\alpha, \beta \in \mathbb{C}$. This concludes the proof of Theorem \ref{thm1}.

We now continue with the proof of Theorem \ref{thm2}. Suppose that $0 < \zeta < \frac{\pi}{2}$. Then, by \eqref{PQrel}, we have
\begin{equation}\label{PQrel2}
P_\nu^{(\alpha ,\beta )} (\cos (2\zeta )) = \lim_{\varepsilon \to 0^+} \tfrac{\im}{\pi}\left( \e^{\pi \alpha \im} Q_\nu ^{(\alpha ,\beta )} (\cosh (2\varepsilon  + 2\im\zeta )) - \e^{ - \pi \alpha \im} Q_\nu ^{(\alpha ,\beta )} (\cosh (2\varepsilon  - 2\im\zeta )) \right),
\end{equation}
provided that $\nu+\alpha, \nu+\beta \notin \mathbb{Z}^{-}$. The associated functions $\SzQ_\nu^{(\alpha, \beta)}(x)$ and $\DQ_\nu^{(\alpha, \beta)}(x)$ are defined in terms of the Jacobi function of the second kind, through the limits \eqref{Qfuncdef} and \eqref{Qfuncdef2}, respectively. Specifically,
\begin{equation}\label{Qrel3}
\SzQ_\nu ^{(\alpha ,\beta )} (\cos (2\zeta) ) = \lim_{\varepsilon\to 0^+} \tfrac{1}{2}\left( Q_\nu ^{(\alpha ,\beta )} (\cosh (2\varepsilon  + 2\im\zeta )) + Q_\nu ^{(\alpha ,\beta )} (\cosh (2\varepsilon  - 2\im\zeta )) \right),
\end{equation}
and
\begin{equation}\label{Qrel4}
\DQ_\nu ^{(\alpha ,\beta )} (\cos (2\zeta)) = \lim_{\varepsilon \to 0^+} \tfrac{1}{2}\left( \e^{\pi \alpha \im} Q_\nu ^{(\alpha ,\beta )} (\cosh (2\varepsilon  + 2\im\zeta )) + \e^{ - \pi \alpha \im} Q_\nu ^{(\alpha ,\beta )} (\cosh (2\varepsilon  - 2\im\zeta )) \right),
\end{equation}
provided that $\nu+\alpha, \nu+\beta \notin \mathbb{Z}^{-}$. Since the validity of the inverse factorial series for $Q_\nu^{(\alpha, \beta)}(\cosh(2\xi))$ has been established in $\overline{\mathcal{D}}$, we can apply it to the right-hand sides of \eqref{PQrel2}--\eqref{Qrel4}. After some straightforward algebra, this yields the desired inverse factorial expansions presented in Theorem \ref{thm2}.

The validity of the factorial expansions in Theorem \ref{thm3} for the special case where $\left|\Re(\alpha)\right|, \left|\Re(\beta)\right| < \frac{1}{2}$, $\Re(\nu) \to +\infty$, and $\Im(\nu)$ is fixed, is established by Theorems \ref{Pfacthm} and \ref{Qfacthm}. In the following, we will extend the region of validity for these expansions. Following a similar approach to that used for the inverse factorial series, we find that for $\xi \in \overline{\mathcal{D}}$ and $\alpha, \beta \in \mathbb{C}$, the Jacobi functions can be represented as
\begin{align*}
& \frac{\pi P_\nu ^{(\alpha ,\beta )} (\cosh (2\xi ))}{2^{2\nu  + \alpha  + \beta } B(\nu  + \alpha  + 1,\nu  + \beta  + 1)} =\\ & \left( \frac{\e^\xi}{\sinh \xi} \right)^{\alpha +1/2} \left( \frac{\e^\xi}{\cosh \xi} \right)^{\beta +1/2} \e^{2\nu \xi} J_+ 
-C(\xi,\alpha) \left( \frac{\e^{-\xi}}{\sinh \xi} \right)^{\alpha + 1/2} \left( \frac{\e^{-\xi}}{\cosh \xi} \right)^{\beta + 1/2} \e^{-2\nu \xi} J_-,
\end{align*}
and
\[
\frac{Q_\nu ^{(\alpha ,\beta )} (\cosh (2\xi ))}{2^{2\nu  + \alpha  + \beta } B(\nu  + \alpha  + 1,\nu  + \beta  + 1)} = \left( \frac{\e^{-\xi}}{\sinh \xi} \right)^{\alpha + 1/2} \left( \frac{\e^{-\xi}}{\cosh \xi} \right)^{\beta + 1/2} \e^{-2\nu \xi} J_-,
\]
where, for fixed $\alpha$ and $\beta$, the functions $J_\pm$ have asymptotic expansions of the form
\begin{equation}\label{Jasymp}
J_\pm \sim \sum_{n=0}^{\infty} c_n \left( \e^{\pm \xi} \right) \frac{\Gamma(2\nu + \alpha + \beta + 2)}{2^{2\nu + \alpha + \beta + 1}} \Gamma \left( n + \tfrac{1}{2} \right) \frac{1}{\nu^{n + \frac{1}{2}} \Gamma(\nu + \alpha + 1) \Gamma(\nu + \beta + 1)}
\end{equation}
as $\nu\to\infty$, within the sector $\left|\arg \nu \right|\le \frac{\pi}{2}-\delta$ ($<\frac{\pi}{2}$) (for $J_+$) and within $\left|\arg \nu \right|\le \pi-\delta$ ($<\pi$) (for $J_-$). In \eqref{Jasymp}, we assume that $\xi$ is bounded away from the points $0, \pm \frac{\pi}{2}\im$. Using the expansion \eqref{Gamma}, we can verify the factorial expansion
\[
\frac{1}{2^{2\nu + \alpha + \beta + 1}} \Gamma \left( n + \tfrac{1}{2} \right) \frac{1}{\nu^{n + \frac{1}{2}} \Gamma(\nu + \alpha + 1) \Gamma(\nu + \beta + 1)} \sim \sum_{k=0}^{\infty} \frac{q_{n,k}}{\Gamma(2\nu + \alpha + \beta + n + k + 2)},
\]
as $\nu\to\infty$ in the sector $\left|\arg \nu \right|\le \pi-\delta$ ($<\pi$), where the coefficients $q_{n,k}$ are polynomials in $\alpha$ and $\beta$. The first two are explicitly given by
\[
q_{n,0} = \frac{(2n)!}{2^n n!}, \quad q_{n,1} = - \frac{(2n)!}{2^{n+2} n!} \left( 2(\alpha - \beta)^2 - (2n + 1)(2\alpha + 2\beta + 3) - 2n^2 \right).
\]
Substituting into \eqref{Jasymp}, we obtain
\[
J_\pm \sim \sum_{n=0}^{\infty} \left( \sum_{k=0}^n q_{n-k,k} c_{n-k} \left( \e^{\pm \xi} \right) \right) \frac{\Gamma(2\nu + \alpha + \beta + 2)}{\Gamma(2\nu + \alpha + \beta + n + 2)},
\]
as $\nu\to\infty$, within the sector $\left|\arg \nu \right|\le \frac{\pi}{2}-\delta$ ($<\frac{\pi}{2}$) (for $J_+$) and within $\left|\arg \nu \right|\le \pi-\delta$ ($<\pi$) (for $J_-$). Theorem \ref{Pfacthm} and the uniqueness of the coefficients in an asymptotic expansion imply that
\begin{equation}\label{coeffidentity2}
g_n(\pm\xi,\alpha,\beta)=(-1)^n\sum_{k=0}^n q_{n-k, k}\, c_{n-k}\left(\e^{\pm\xi}\right)
\end{equation}
when $\xi\in\mathcal{D}\setminus \mathbb{R}^+$ and $\left|\Re(\alpha)\right|, \left|\Re(\beta)\right| < \frac{1}{2}$. Since both sides of this equation are rational functions of $\e^{\pm \xi}$ and polynomial functions of $\alpha$ and $\beta$, an analytic continuation and continuity argument show that \eqref{coeffidentity2} holds for $\xi \in \overline{\mathcal{D}}$ and arbitrary $\alpha, \beta \in \mathbb{C}$. This completes the proof of Theorem \ref{thm3}.

Since the factorial series for $Q_\nu^{(\alpha, \beta)}(\cosh(2\xi))$ has been established in $\overline{\mathcal{D}}$, we can now apply it to the right-hand sides of \eqref{PQrel2}--\eqref{Qrel4}. A straightforward algebraic manipulation then leads to the desired factorial expansions, as given in Theorem \ref{thm4}.

\section{Proofs of Theorems \ref{Pinfacthm} and \ref{Qinfacthm}}\label{infproof1}

In this section, we establish the error bounds presented in Theorems \ref{Pinfacthm} and \ref{Qinfacthm}. To this end, we need an estimate for the remainder term in the well-known asymptotic expansion of the modified Bessel function of the second kind, as provided in the following lemma.

\begin{lemma}\label{lemma1} Let $N$ be a non-negative integer, and let $\mu$ be an arbitrary complex number. Define the remainder $r_N^{(K)} (w,\mu )$ by
\begin{equation}\label{eq2}
K_\mu (w) = \sqrt{\frac{\pi }{2w}} \e^{ - w} \left(  \sum\limits_{n = 0}^{N - 1} \frac{a_n (\mu )}{w^n}  + r_N^{(K)} (w,\mu ) \right).
\end{equation}
Then we have the bound
\begin{align*}
\left| r_N^{(K)} (w,\mu ) \right| \le &\;\left|\frac{\cos (\pi \mu )}{\cos (\pi \Re(\mu) )}\right|\frac{\left|a_N (\Re(\mu) )\right|}{\left| w \right|^N } \\
&\times 
\begin{cases} 1 & \text{if }\; |\arg w| \le \frac{\pi }{2}, \\ \min\big(\left|\csc (\arg w) \right|,1 + \chi\big(N+\frac{1}{2}\big)\big) 
& \text{if }\;  \frac{\pi }{2} < |\arg w| \leq \pi, \end{cases}
\end{align*}
provided $\left|\Re(\mu)\right|<N + \frac{1}{2}$. Here, $\chi(p)$ is defined in \eqref{chidef}. If $2\Re(\mu)$ is an odd integer, the limiting value must be taken in this bound. Moreover, the remainder term $r_N^{(K)} (w ,\mu)$ does not exceed the first neglected term in absolute value and has the same sign, provided $w$ is positive, $\mu$ is real, and $\left| \mu \right| <N + \frac{1}{2}$.
\end{lemma}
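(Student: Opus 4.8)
The plan is to represent the remainder $r_N^{(K)}(w,\mu)$ exactly as an integral and then estimate that integral. I would start from the classical representation
\[
K_\mu(w)=\sqrt{\tfrac{\pi}{2w}}\,\frac{\e^{-w}}{\Gamma(\mu+\frac12)}\int_0^{\infty}\e^{-\tau}\tau^{\mu-1/2}\Bigl(1+\tfrac{\tau}{2w}\Bigr)^{\mu-1/2}\d\tau ,
\]
valid for $\Re(\mu)>-\frac12$ and $|\arg w|<\frac{\pi}{2}$, and substitute the exact (integral-form) Taylor remainder of $(1+\frac{\tau}{2w})^{\mu-1/2}$ after $N$ terms. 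The $n$-th Taylor coefficient reproduces precisely $a_n(\mu)$, so this isolates $\sum_{n=0}^{N-1}a_n(\mu)/w^n$ and leaves a closed-form expression for $r_N^{(K)}(w,\mu)$. The decisive manipulation is to apply the reflection formula to the factor $1/\Gamma(\mu+\frac12-N)$ that appears, which produces the explicit prefactor $(-1)^N\cos(\pi\mu)/\pi$; together with the identity $a_N(\mu)=\frac{(-1)^N\cos(\pi\mu)}{\pi\,2^N N!}\Gamma(N+\frac12+\mu)\Gamma(N+\frac12-\mu)$ this explains both the $\cos(\pi\mu)$ in the bound and the emergence of the coefficient $a_N$.

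Next I would recast the representation as a single Stieltjes-type integral with a first-power denominator $1+t/w$ (equivalently $t+w$), so that the dependence on $\Im(\mu)$ enters only through modulus-one factors $t^{\pm\im\Im(\mu)}$ and never through the exponent of the kernel; once derived, the representation extends by analytic continuation to the full sector $|\arg w|\le\pi$. This reformulation is exactly what makes the estimate sharp in $\Im(\mu)$: the lower bound $|1+t/w|\ge1$ for $|\arg w|\le\frac{\pi}{2}$ yields the factor $1$, while for $\frac{\pi}{2}<|\arg w|\le\pi$ the two lower bounds $|1+t/w|\ge|t/w|\,|\sin(\arg w)|$ and a second, globally valid estimate (producing $1+\chi(N+\frac12)$ through the auxiliary function $\chi$) give the two entries of the $\min$. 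After taking absolute values, the remaining real integral evaluates to a ratio of gamma functions that reconstructs $a_N(\Re(\mu))$.

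To pass from the complex coefficient to $a_N(\Re(\mu))$ and the cosine ratio, I would invoke the elementary inequality $|\Gamma(x+\im y)|\le\Gamma(x)$ (for $x>0$) applied to $\Gamma(N+\frac12\pm\mu)$; combined with the identity above this gives $|a_N(\mu)|\le\bigl|\frac{\cos(\pi\mu)}{\cos(\pi\Re(\mu))}\bigr|\,a_N(\Re(\mu))$, precisely the prefactor in the statement, with the limiting value taken when $\cos(\pi\Re(\mu))=0$. The hypothesis $|\Re(\mu)|<N+\frac12$ is used exactly to keep $\Gamma(N+\frac12-\Re(\mu))$ finite. Since both $K_\mu$ and $a_n(\mu)$ are even in $\mu$, I would reduce to $\Re(\mu)\ge0$ at the outset, which also secures $\Re(\mu)>-\frac12$ so that the base representation applies.

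Finally, for the sign and first-neglected-term assertion, I would use the representation directly with $w>0$ and $\mu$ real, $0\le\mu<N+\frac12$ (by the evenness reduction): the kernel is then strictly positive and the factor $(1+t/w)^{-1}\in(0,1]$, so $r_N^{(K)}(w,\mu)$ equals $a_N(\mu)/w^N$ times a factor in $(0,1]$, giving both the same sign and the bound by the first neglected term. I expect the principal obstacle to be the second paragraph: arranging the representation so that $\Im(\mu)$ appears only through modulus-one factors — a naive bound on the double integral coming from the Taylor remainder overestimates by a spurious factor $\e^{|\Im(\mu)|\,|\arg w|}$ and is too weak — and then correctly combining the two competing kernel estimates in the sector $\frac{\pi}{2}<|\arg w|\le\pi$ to obtain the stated $\min$.
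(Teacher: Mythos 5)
The paper does not prove this lemma from scratch: its entire proof is a citation (Theorem~1.8 and Propositions~B.1, B.3 of the author's 2017 paper for complex $\mu$, and Watson's treatise for the real case), so your proposal is necessarily a reconstruction. Your final paragraph (positive $w$, real $\mu$) is correct and is exactly Watson's argument, and your two identities
\[
a_N(\mu)=\frac{(-1)^N\cos (\pi\mu)}{\pi\, 2^N N!}\,\Gamma\!\left(N+\tfrac12+\mu\right)\Gamma\!\left(N+\tfrac12-\mu\right),
\qquad
\left|a_N(\mu)\right|\le\left|\frac{\cos (\pi \mu)}{\cos (\pi \Re(\mu))}\right| \left|a_N(\Re (\mu))\right|,
\]
are both true. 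The gap is in the complex-$\mu$ bound, at exactly the step you call the principal obstacle. Replacing the Taylor remainder by the beta-integral form does give a first-power kernel with modulus-one factors, namely
\begin{equation*}
r_N^{(K)} (w,\mu )=\frac{(-1)^N\cos (\pi \mu )}{\pi\,\Gamma\!\left(\mu+\tfrac12\right)(2w)^N}\int_0^{+\infty}\!\!\int_0^1\frac{\e^{-\tau}\,\tau^{N+\mu-1/2}\,t^{N-1/2-\mu}\,(1-t)^{\mu-1/2}}{1+t\tau/(2w)}\id t\id\tau ,
\end{equation*}
but the normalisation $1/\Gamma(\mu+\tfrac12)$ inherited from your starting Laplace-type representation sits \emph{outside} the integral. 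After taking absolute values the double integral evaluates to $\Gamma(N+\tfrac12+\Re(\mu))\Gamma(N+\tfrac12-\Re(\mu))\Gamma(\Re(\mu)+\tfrac12)/N!$, so what you actually obtain is the stated bound multiplied by the parasitic factor $\Gamma(\Re(\mu)+\tfrac12)/\left|\Gamma(\mu+\tfrac12)\right|\ge 1$, which grows like $\e^{\pi\left|\Im(\mu)\right|/2}$ (for $\Re(\mu)=0$ it equals $\sqrt{\cosh(\pi\Im(\mu))}$). Your claim that the real integral ``reconstructs $a_N(\Re(\mu))$'' is off by precisely this factor, and neither the modulus-one observation nor the coefficient inequality can remove it: the loss is in the measure, not in the coefficient. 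The missing idea is a representation in which the parameter sits inside a Bessel function rather than in powers normalised by $1/\Gamma(\mu+\tfrac12)$, e.g.\ the resurgence-type Stieltjes form
\begin{equation*}
r_N^{(K)} (w,\mu )=\frac{(-1)^N\cos (\pi \mu )}{\pi w^N}\int_0^{+\infty}\frac{t^{N-1}\e^{-2t}}{1+t/w}\sqrt{\frac{2t}{\pi}}\,\e^{t}K_\mu (t)\id t ,
\end{equation*}
whose moments are exactly $(-1)^n a_n(\mu)$ by $\int_0^{+\infty}t^{s-1}\e^{-t}K_\mu(t)\id t=\sqrt{\pi}\,\Gamma(s+\mu)\Gamma(s-\mu)/(2^s\Gamma(s+\tfrac12))$; here the pointwise inequality $\left|K_\mu(t)\right|\le K_{\Re(\mu)}(t)$ (immediate from $K_\mu(t)=\int_0^{\infty}\e^{-t\cosh s}\cosh(\mu s)\,\d s$) converts complex $\mu$ to real $\mu$ with no loss, and the stated bound follows for $|\arg w|\le\tfrac{\pi}{2}$ and for the $\csc$ entry. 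This is the mechanism behind the result the paper cites.

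There is a second, structural gap: the $1+\chi\big(N+\tfrac12\big)$ entry of the $\min$ cannot be produced by your plan of ``take absolute values, then choose between two kernel lower bounds.'' Once the absolute value is inside the integral, the only usable estimate in $\tfrac{\pi}{2}<|\arg w|\le\pi$ is the $\csc$ one, and on the boundary rays $\arg w=\pm\pi$ the method collapses entirely: there $\left|1+t/w\right|=\left|1-t/|w|\right|$ vanishes at $t=|w|$ and the absolute-value integral diverges, whereas the lemma asserts a finite bound (the $\min$ being then attained by $1+\chi$). Obtaining that entry requires rotating the $t$-contour \emph{before} estimating, and hence bounds for the weight function along complex rays; that is precisely what the cited Propositions B.1 and B.3 supply, and your sketch names the constant $\chi$ without providing any mechanism that could generate it.
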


\begin{proof} The statement for complex variables follows from Theorem 1.8 and Propositions B.1 and B.3 in \cite{Nemes17}. For the real case, see, e.g., \cite[pp.\ 206--207]{Watson1944}.
\end{proof}

We proceed with the proof of Theorem \ref{Pinfacthm}. The Jacobi function of the first kind can be expressed in terms of modified Bessel functions as
\begin{align*}
2^{\nu - 1} & B(\nu + 1, \nu + \alpha + \beta + 1) P_\nu^{(\alpha, \beta)}(z) = \frac{1}{(z - 1)^{\alpha/2} (z + 1)^{\beta/2}} \\ &\times \frac{1}{\Gamma(2\nu + \alpha + \beta + 2)} \int_0^{+\infty} t^{2\nu + \alpha + \beta + 1} I_\alpha \left( t\sqrt{z - 1} \right) K_\beta \left( t\sqrt{z + 1} \right) \id t,
\end{align*}
provided that $\Re(\sqrt{z+1})>|\Re(\sqrt{z-1})|$ and $\Re(2\nu+\alpha+\beta+2) > -\Re(\alpha)+\left|\Re(\beta)\right|$ (combine \cite[Eq. (18)]{Cohl2023} and \cite[Eq. (6.8.44)]{Erdelyi1954}\footnote{Note that the condition $\Re(s + \mu + \nu) > 0$ is missing in reference \cite{Erdelyi1954} for formula (6.8.44) to be valid.}). By applying a suitable change of variables, this can be re-written as
\begin{gather}\label{eq9}
\begin{split}
2^{2\nu + \alpha + \beta}& B(\nu + 1, \nu + \alpha + \beta + 1) P_\nu^{(\alpha, \beta)}(\cosh(2\xi)) = \frac{1}{\sinh^\alpha \xi \cosh^\beta \xi} \\ &\times \frac{1}{\Gamma(2\nu + \alpha + \beta + 2)} \int_0^{+\infty} t^{2\nu + \alpha + \beta + 1} I_\alpha(t \sinh \xi) K_\beta(t \cosh \xi) \id t
\end{split}
\end{gather}
provided that  $\xi  \in \mathcal{D}$ and $\Re(2\nu+\alpha+\beta+2) > -\Re(\alpha)+\left|\Re(\beta)\right|$.

Now, suppose that $|\arg w|\le\pi$. Under this assumption, the modified Bessel functions satisfy the following relations:
\[
I_\mu (w) =  \mp \frac{\im}{\pi }K_\mu (w\e^{ \mp \pi \im} ) \pm \frac{\im}{\pi }\e^{ \pm \pi \im\mu } K_\mu (w),
\]
when $0<\pm \arg w \le\pi$, and
\[
I_\mu (w) = \frac{\im}{2\pi }\left(K_\mu \left(w\e^{\pi \im} \right) - K_\mu \left(w\e^{ - \pi \im} \right)\right) - \frac{\sin (\pi \mu )}{\pi }K_\mu (w),
\]
when $\arg w = 0$ (cf. \cite[\href{https://dlmf.nist.gov/10.34.E3}{Eq. (10.34.3)}]{NIST:DLMF}). By combining these functional equations with the expression \eqref{eq2}, we obtain
\begin{gather}\label{eq5}
\begin{split}
I_\mu (w) = &\;\frac{\e^w }{\sqrt{2\pi w}}\left( \sum_{n = 0}^{N - 1} ( - 1)^n \frac{a_n (\mu )}{w^n }  + \widetilde r_N^{(K)} (w,\mu ) \right) \\
&- \widetilde C(\arg w,\mu )\frac{\e^{ - w} }{\sqrt{2\pi w}}\left( \sum_{m = 0}^{M - 1} \frac{a_m (\mu )}{w^m }  + r_M^{(K)} (w,\mu ) \right),
\end{split}
\end{gather}
with
\begin{equation}\label{eq7}
\widetilde C(\arg w,\mu )  = \begin{cases} \sin (\pi \mu ) & \text{if }\; \arg w = 0, \\ \mp \im \e^{ \pm \pi \im\mu } & \text{if }\; 0<\pm \arg w \le \pi, \end{cases}
\end{equation}
and
\begin{equation}\label{eq11}
\widetilde r_N^{(K)} (w,\mu )  = \begin{cases} \frac{1}{2}\left(r_N^{(K)} \left(w\e^{\pi \im} ,\mu \right) + r_N^{(K)} \left(w\e^{ - \pi \im} ,\mu \right)\right) & \text{if }\; \arg w = 0, \\ r_N^{(K)} \left(w\e^{ \mp \pi \im} ,\mu \right) & \text{if }\; 0<\pm \arg w \le \pi. \end{cases}
\end{equation}
Here, $N$ and $M$ are arbitrary non-negative integers. Multiplying the expressions \eqref{eq2} and \eqref{eq5} together gives
\begin{gather}\label{eq10}
\begin{split}
& I_\alpha ( t\sinh \xi )K_\beta  ( t\cosh \xi  ) = \frac{\exp \left( - t\e^{- \xi } \right)}{2\sqrt{ \sinh \xi\cosh \xi}   t}\Bigg( \sum_{n = 0}^{N - 1} \sum_{\ell = 0}^n (-1)^\ell \frac{a_\ell (\alpha) a_{n - \ell} (\beta)}{\sinh^\ell \xi \cosh^{n - \ell} \xi} \frac{1}{t^n} + R_{N,1} \left( t,\xi,\alpha,\beta \right) \Bigg) \\ & - C(\xi ,\alpha )\frac{\exp \left( - t\e^{\xi }  \right)}{2\sqrt{ \sinh \xi\cosh \xi}   t}\Bigg( \sum_{m = 0}^{M - 1} \sum_{\ell = 0}^m  \frac{a_\ell (\alpha) a_{m - \ell} (\beta)}{\sinh^\ell \xi \cosh^{m - \ell} \xi} \frac{1}{t^m} + R_{M,2} \left( t,\xi,\alpha,\beta \right) \Bigg),
\end{split}
\end{gather}
where
\begin{equation}\label{eq20}
R_{N,1}  ( t,\xi,\alpha,\beta  ) = \widetilde r_N^{(K)} \left( t\sinh \xi,\alpha \right) r_0^{(K)} \left( t\cosh \xi,\beta \right) + \sum_{\ell = 0}^{N - 1} (-1)^\ell\frac{a_\ell (\alpha)}{\sinh^\ell\xi} \frac{1}{t^\ell} r_{N - \ell}^{(K)} \left( t\cosh \xi,\beta \right), 
\end{equation}
or
\begin{equation}\label{eq21}
R_{N,1}  ( t,\xi,\alpha,\beta  ) = \widetilde r_0^{(K)}(t \sinh \xi, \alpha) r_N^{(K)}(t \cosh \xi, \beta) + \sum_{\ell = 1}^N \frac{a_{N-\ell}(\beta)}{\cosh^{N-\ell} \xi} \frac{1}{t^{N-\ell}} \widetilde r_\ell^{(K)}(t \sinh \xi, \alpha),
\end{equation}
and
\begin{equation}\label{eq22}
R_{M,2}  ( t,\xi,\alpha,\beta  ) = r_M^{(K)} \left( t\sinh \xi,\alpha \right) r_0^{(K)} \left( t\cosh \xi,\beta \right) + \sum_{\ell = 0}^{M - 1} \frac{a_\ell (\alpha)}{\sinh^\ell \xi} \frac{1}{t^\ell} r_{M - \ell}^{(K)} \left( t\cosh \xi,\beta \right) ,
\end{equation}
or
\begin{equation}\label{eq23}
R_{M,2}  ( t,\xi,\alpha,\beta  ) = r_0^{(K)} \left( t\sinh \xi,\alpha \right) r_M^{(K)} \left( t\cosh \xi,\beta \right) + \sum_{\ell = 1}^M \frac{a_{M-\ell} (\beta)}{\cosh^{M-\ell} \xi} \frac{1}{t^{M-\ell}} r_\ell^{(K)} \left( t\sinh \xi,\alpha \right) .
\end{equation}
Furthermore, we have defined $C(\xi ,\alpha ) =\widetilde C \left( \arg \left( \sinh \xi \right),\alpha \right)$. The formula \eqref{Cdef} for $C(\xi, \alpha)$ follows directly from \eqref{eq7} and the properties of the hyperbolic sine function. Assume that $\Re(2\nu + \alpha + \beta + 1) > \max(N, M)$ and $\Re(2\nu+\alpha+\beta+2) > -\Re(\alpha)+\left|\Re(\beta)\right|$. Substituting equation \eqref{eq10} into \eqref{eq9}, we obtain  \eqref{Pinfac} with
\[
R_N^{(P1)} (\nu,\xi ,\alpha ,\beta ) = \frac{\e^{ -(2\nu + \alpha  + \beta  + 1) \xi}}{\Gamma (2\nu  + \alpha  + \beta  + 2)}\int_0^{ + \infty }  t^{2\nu + \alpha  + \beta } \exp \left(  - t\e^{ - \xi } \right)R_{N,1} (t,\xi ,\alpha ,\beta )\id t ,
\]
\[
R_M^{(P2)} (\nu,\xi ,\alpha ,\beta ) = \frac{\e^{(2\nu + \alpha  + \beta  + 1) \xi}}{\Gamma (2\nu  + \alpha  + \beta  + 2)} \int_0^{ + \infty }  t^{2\nu + \alpha  + \beta } \exp \left( - t\e^{\xi } \right)R_{M,2} (t,\xi ,\alpha ,\beta )\id t  .
\]
We change the integration variable from $t$ to $s$, setting $t = s \e^\xi$ for the first integral and $t = s \e^{-\xi}$ for the second. By applying Cauchy's theorem, the contours of integration can be deformed back onto the positive real axis, leading to the following representations:
\begin{equation}\label{RP1int}
R_N^{(P1)} (\nu,\xi ,\alpha ,\beta ) = \frac{1}{\Gamma (2\nu  + \alpha  + \beta  + 2)}\int_0^{ + \infty }  s^{2\nu + \alpha  + \beta } \e^{-s}R_{N,1} \left(s\e^{\xi },\xi ,\alpha ,\beta \right)\id s ,
\end{equation}
\begin{equation}\label{RP2int}
R_M^{(P2)} (\nu,\xi ,\alpha ,\beta ) = \frac{1}{\Gamma (2\nu  + \alpha  + \beta  + 2)} \int_0^{ + \infty }  s^{2\nu + \alpha  + \beta } \e^{-s}R_{M,2} \left(s\e^{ - \xi },\xi ,\alpha ,\beta \right)\id s  .
\end{equation}
From Lemma \ref{lemma1}, equations \eqref{eq11} and \eqref{eq20}, and the properties of the function $\e^{\xi} \sinh\xi$, we deduce the following bound:
\begin{gather}\label{bnd1}
\begin{split}
\left|R_{N,1} \left(s\e^{\xi },\xi ,\alpha ,\beta \right)\right|  \le &\; \left| \frac{\cos(\pi \alpha) \cos(\pi \beta)}{\cos(\pi \Re(\alpha)) \cos(\pi \Re(\beta))} a_N(\Re(\alpha)) \left( \frac{\e^{-\xi}}{\sinh \xi} \right)^N \right| \frac{1}{s^N} \\ & \times \begin{cases} 1 & \text{if }\; \frac{\pi }{2} \le \left| \vartheta  \right| \le \pi, \\ \min\big(\left|\csc \vartheta \right|,1 + \chi\big(N+\frac{1}{2}\big)\big) 
& \text{if }\; \left| \vartheta  \right| < \frac{\pi }{2} \end{cases} \\ &
+ \sum_{\ell = 0}^{N-1} \left| \frac{\cos(\pi \beta)}{\cos(\pi \Re(\beta))} a_\ell(\alpha) a_{N-\ell}(\Re(\beta)) \left( \frac{\e^{-\xi}}{\sinh \xi} \right)^\ell \left( \frac{\e^{-\xi}}{\cosh \xi} \right)^{N-\ell} \right| \frac{1}{s^N},
\end{split}
\end{gather}
where $\vartheta = \arg(\e^\xi \sinh \xi) \in \left[-\pi,\pi\right]$, assuming $\xi \in \overline{\mathcal{D}}$, $\left|\Re (\alpha)\right|<N+\frac{1}{2}$, and $\left|\Re(\beta )\right|<\frac{1}{2}$. For the definition of $\overline{\mathcal{D}}$, refer to \eqref{Dbar}. Similarly, using the alternative expression \eqref{eq21}, we obtain the bound
\begin{gather}\label{bnd2}
\begin{split}
\left|R_{N,1} \left(s\e^{\xi },\xi ,\alpha ,\beta \right)\right|  \le &\; \left(\left| \frac{\cos(\pi \alpha) \cos(\pi \beta)}{\cos(\pi \Re(\alpha)) \cos(\pi \Re(\beta))} a_N(\Re(\beta)) \left( \frac{\e^{-\xi}}{\cosh \xi} \right)^N \right| \frac{1}{s^N} \right. \\ &+ \left. \sum_{\ell = 1}^N \left| \frac{\cos(\pi \alpha)}{\cos(\pi \Re(\alpha))} a_\ell(\Re(\alpha)) a_{N-\ell}(\beta) \left( \frac{\e^{-\xi}}{\sinh \xi} \right)^\ell \left( \frac{\e^{-\xi}}{\cosh \xi} \right)^{N-\ell} \right| \frac{1}{s^N}\right)
\\ & \times \begin{cases} 1 & \text{if }\; \frac{\pi }{2} \le \left| \vartheta  \right|\le \pi, \\ \min\big(\left|\csc \vartheta \right|,1 + \chi\big(N+\frac{1}{2}\big)\big) 
& \text{if }\; \left| \vartheta  \right| < \frac{\pi }{2}, \end{cases}
\end{split}
\end{gather}
provided that $\xi \in \overline{\mathcal{D}}$, $\left|\Re (\alpha)\right|<\frac{1}{2}$, and $\left|\Re(\beta )\right|<N+\frac{1}{2}$. Note that if either $\left|\Re(\alpha)\right| < N + \frac{1}{2}$ and $\left|\Re(\beta)\right| < \frac{1}{2}$, or $\left|\Re(\alpha)\right| < \frac{1}{2}$ and $\left|\Re(\beta)\right| < N + \frac{1}{2}$, then the condition $\Re(2\nu + \alpha + \beta + 1) > \max(N, M)$ automatically implies $\Re(2\nu + \alpha + \beta + 2) > -\Re(\alpha) + \left|\Re(\beta)\right|$. Consequently, this latter condition can be omitted. In Section 7 of the paper \cite{Nemes2020}, it is shown that the conditions involving $\vartheta$ can be equivalently expressed in terms of $\xi$ as follows:
\begin{equation}\label{cases}
\begin{cases} 1 & \text{if }\; \Re \left(\e^{2\xi } \right) \le 1, \\ 
\min\big(\left| 1 - \e^{ - 2\xi } \right|\left| \csc (2\Im(\xi) ) \right|,1 + \chi\big(N+\tfrac{1}{2}\big)\big) & \text{if }\; \Re \left(\e^{2\xi } \right) > 1. \end{cases}
\end{equation}
From \eqref{RP1int}, we can infer that
\[
\left| R_N^{(P1)} (\nu,\xi ,\alpha ,\beta )\right| \le \frac{1}{\left|\Gamma (2\nu  + \alpha  + \beta  + 2)\right|}\int_0^{ + \infty }  s^{\Re(2\nu + \alpha  + \beta)} \e^{-s}\left|R_{N,1} \left(s\e^{\xi },\xi ,\alpha ,\beta \right)\right|\id s.
\]
Restricting $\xi$ to $\mathcal{D}$, substituting the estimates from \eqref{bnd1} and \eqref{bnd2}, and applying \eqref{cases} yields the bounds \eqref{eq40} and \eqref{eq41}, respectively. In a similar manner, from Lemma \ref{lemma1}, formulae \eqref{eq22} and \eqref{eq23}, and the properties of the function $\e^{-\xi} \sinh\xi$, we can deduce the following bounds:
\begin{gather}\label{bnd3}
\begin{split}
\left|R_{M,2} \left(s\e^{-\xi },\xi ,\alpha ,\beta \right)\right|  \le &\; \left| \frac{\cos(\pi \alpha) \cos(\pi \beta)}{\cos(\pi \Re(\alpha)) \cos(\pi \Re(\beta))} a_M(\Re(\alpha)) \left( \frac{\e^{\xi}}{\sinh \xi} \right)^M \right| \frac{1}{s^M} \\ &
+ \sum_{\ell = 0}^{M-1} \left| \frac{\cos(\pi \beta)}{\cos(\pi \Re(\beta))} a_\ell(\alpha) a_{M-\ell}(\Re(\beta)) \left( \frac{\e^{\xi}}{\sinh \xi} \right)^\ell \left( \frac{\e^{\xi}}{\cosh \xi} \right)^{M-\ell} \right| \frac{1}{s^M},
\end{split}
\end{gather}
provided that $\xi \in \overline{\mathcal{D}}$, $\left|\Re (\alpha)\right|<M+\frac{1}{2}$, and $\left|\Re(\beta )\right|<\frac{1}{2}$, and
\begin{gather}\label{bnd4}
\begin{split}
\left|R_{M,2} \left(s\e^{-\xi },\xi ,\alpha ,\beta \right)\right|  \le &\; \left| \frac{\cos(\pi \alpha) \cos(\pi \beta)}{\cos(\pi \Re(\alpha)) \cos(\pi \Re(\beta))} a_M(\Re(\beta)) \left( \frac{\e^{\xi}}{\cosh \xi} \right)^M \right| \frac{1}{s^M}  \\ &+ \sum_{\ell = 1}^M \left| \frac{\cos(\pi \alpha)}{\cos(\pi \Re(\alpha))} a_\ell(\Re(\alpha)) a_{M-\ell}(\beta) \left( \frac{\e^{\xi}}{\sinh \xi} \right)^\ell \left( \frac{\e^{\xi}}{\cosh \xi} \right)^{M-\ell} \right| \frac{1}{s^M}
\end{split}
\end{gather}
provided that $\xi \in \overline{\mathcal{D}}$, $\left|\Re (\alpha)\right|<\frac{1}{2}$, and $\left|\Re(\beta )\right|<M+\frac{1}{2}$. The bounds \eqref{eq42} and \eqref{eq43} now follow similarly to those of \eqref{eq40} and \eqref{eq41}. The estimates \eqref{eq44} and \eqref{eq45} can be derived from \eqref{eq41} and \eqref{eq43} by applying the inequality
\begin{equation}\label{coeffineq}
\left| a_\ell(\beta) \right| \le \left| \frac{\cos (\pi \beta)}{\cos (\pi \Re(\beta))}a_\ell (\Re (\beta)) \right|, \quad \left|\Re(\beta) \right|<\ell+\tfrac{1}{2},
\end{equation}
which follows from \cite[Eq. (69)]{Nemes17}.

Now, assume that $ \xi > 0 $, and that $ \nu $, $ \alpha $, and $ \beta $ are real, with $ |\alpha|, |\beta| < \frac{1}{2} $, and $ 2\nu + \alpha + \beta + 1 > M $. From Lemma \ref{lemma1}, it follows that
\[
r_M^{(K)} (w, \mu) = \theta_M (w, \mu) \frac{a_M (\mu)}{w^M},
\]
where $ w > 0 $, $ \mu $ is real, $ |\mu| < M + \frac{1}{2} $, and $ 0 < \theta_M (w, \mu) < 1 $. Applying this result to \eqref{eq22} gives
\begin{align*}
R_{M,2} \left(s\e^{ - \xi }, \xi, \alpha, \beta\right) =&\; \theta_M \left(s\e^{ - \xi } \sinh \xi, \alpha\right) \theta_0 \left(s\e^{ - \xi } \cosh \xi, \beta\right) a_M (\alpha) \left( \frac{\e^\xi}{\sinh \xi} \right)^M \frac{1}{s^M} 
\\ & + \sum_{\ell = 0}^{M - 1} \theta_{M - \ell} \left(s\e^{ - \xi } \cosh \xi, \beta\right) a_\ell (\alpha) a_{M - \ell} (\beta) \left( \frac{\e^\xi}{\sinh \xi} \right)^\ell \left( \frac{\e^\xi}{\cosh \xi} \right)^{M - \ell} \frac{1}{s^M}\\
=&\; \widetilde{\theta}_M (s, \xi,\alpha, \beta) \sum_{\ell = 0}^{M - 1} a_\ell (\alpha) a_{M - \ell} (\beta) \left( \frac{\e^\xi}{\sinh \xi} \right)^\ell \left( \frac{\e^\xi}{\cosh \xi} \right)^{M - \ell} \frac{1}{s^M},
\end{align*}
with a suitable $0<\widetilde{\theta}_M (s, \xi, \alpha,\beta)<1$, since $a_\ell  (\alpha )a_{M - \ell } (\beta ) = ( - 1)^M \left| a_\ell  (\alpha )a_{M - \ell } (\beta ) \right|$ when $|\alpha|, |\beta| < \frac{1}{2}$. Hence, by applying the mean value theorem for improper integrals, we deduce that
\begin{align*}
R_M^{(P2)} (\nu,\xi ,\alpha ,\beta )  = &\;\sum_{\ell = 0}^{M - 1} a_\ell (\alpha) a_{M - \ell} (\beta) 
\left( \frac{\e^\xi}{\sinh \xi} \right)^\ell 
\left( \frac{\e^\xi}{\cosh \xi} \right)^{M - \ell} \\ & \times \frac{1}{\Gamma(2\nu + \alpha + \beta + 2)}
\int_0^{+\infty} \widetilde{\theta}_M (s, \xi, \alpha, \beta) s^{2\nu + \alpha + \beta - M} \e^{-s} \id s 
\\   = &\;\Theta_M (\nu,\xi, \alpha, \beta) \sum_{\ell = 0}^{M - 1} a_\ell (\alpha) a_{M - \ell} (\beta) 
\left( \frac{\e^\xi}{\sinh \xi} \right)^\ell 
\left( \frac{\e^\xi}{\cosh \xi} \right)^{M - \ell} 
\frac{\Gamma(2\nu + \alpha + \beta - M + 1)}{\Gamma(2\nu + \alpha + \beta + 2)}
\end{align*}
where $0<\Theta_M (\nu,\xi, \alpha, \beta)<1$. This concludes the proof of Theorem \ref{Pinfacthm}.

We continue with the proof of Theorem \ref{Qinfacthm}. The Jacobi function of the second kind can be expressed in terms of the modified Bessel function of the second kind as follows:
\begin{align*}
2^{\nu - 1} & B(\nu + 1, \nu + \alpha + \beta + 1) Q_\nu^{(\alpha, \beta)}(z) = \frac{1}{(z - 1)^{\alpha/2} (z + 1)^{\beta/2}} \\ &\times \frac{1}{\Gamma(2\nu + \alpha + \beta + 2)} \int_0^{+\infty} t^{2\nu + \alpha + \beta + 1} K_\alpha \left( t\sqrt{z - 1} \right) K_\beta \left( t\sqrt{z + 1} \right) \id t,
\end{align*}
provided that $\Re(\sqrt{z-1}+\sqrt{z+1})>0$ and $\Re (2\nu+\alpha+\beta+2) > \left|\Re(\alpha)\right|+\left|\Re(\beta)\right|$ (combine \cite[Eq. (21)]{Cohl2023} and \cite[Eq. (6.8.47)]{Erdelyi1954}). By applying an appropriate change of variables, this can be rewritten as \eqref{integralrep1}. Let $N$ be an arbitrary
non-negative integer. Multiplying two copies of the expression in \eqref{eq2} yields
\begin{equation}\label{eq25}
 K_\alpha ( t\sinh \xi )K_\beta  ( t\cosh \xi  ) = \frac{\pi}{2}\frac{\exp \left( - t\e^{\xi }  \right)}{\sqrt{ \sinh \xi\cosh \xi}   t}\Bigg( \sum_{n = 0}^{N - 1} \sum_{\ell = 0}^n  \frac{a_\ell (\alpha) a_{n- \ell} (\beta)}{\sinh^\ell \xi \cosh^{n- \ell} \xi} \frac{1}{t^n} + R_{N,2} \left( t,\xi,\alpha,\beta \right) \Bigg),
\end{equation}
where $R_{N,2} \left( t,\xi,\alpha,\beta \right)$ is given by either \eqref{eq22} or \eqref{eq23}, with $N$ in place of $M$. Assume that $\Re(2\nu + \alpha + \beta + 1) > N$ and $\Re (2\nu+\alpha+\beta+2) > \left|\Re(\alpha)\right|+\left|\Re(\beta)\right|$. By substituting equation \eqref{eq25} into \eqref{integralrep1}, we obtain \eqref{Qinfac} with $R_N^{(Q)}(\nu, \xi, \alpha, \beta)=R_N^{(P2)}(\nu, \xi, \alpha, \beta)$. Therefore, the estimates for $R_N^{(Q)}(\nu, \xi, \alpha, \beta)$ follow directly from those for $R_N^{(P2)}(\nu, \xi, \alpha, \beta)$. Note that if either $\left|\Re(\alpha)\right| < N + \frac{1}{2}$ and $\left|\Re(\beta)\right| < \frac{1}{2}$, or $\left|\Re(\alpha)\right| < \frac{1}{2}$ and $\left|\Re(\beta)\right| < N + \frac{1}{2}$, then the condition $\Re(2\nu + \alpha + \beta + 1) > N$ automatically ensures that $\Re(2\nu + \alpha + \beta + 2) > \left|\Re(\alpha)\right| + \left|\Re(\beta)\right|$. Therefore, the latter condition can be omitted.

We conclude this section by showing that the inverse factorial expansions \eqref{Pifacasymp} and \eqref{Qifacasymp} terminate, providing an exact representation of the corresponding functions when both $2\alpha$ and $2\beta$ are odd integers. Let $L$ denote the positive integer defined by $\max (2\left| \alpha \right|,2\left| \beta \right|) = 2L - 1$. Then, by definition \eqref{andef} and Lemma \ref{lemma1}, we have
\[
a_\ell  \left( L - \tfrac{1}{2} \right) = 0,\quad r_\ell ^{(K)} \left(w,L - \tfrac{1}{2} \right)=0,
\]
for all $\ell \ge L$. Therefore, from \eqref{eq11}, \eqref{eq20}, and \eqref{eq22}, it follows that $R_{2L,1} (t,\xi ,\alpha ,\beta ) = 0$ and $R_{2L,2} (t,\xi ,\alpha ,\beta ) = 0$. Hence, by \eqref{RP1int} and \eqref{RP2int}, we find that $R_{2L}^{(P1)} (\nu, \xi, \alpha, \beta) = 0$ and $R_{2L}^{(P2)} (\nu, \xi, \alpha, \beta) = 0$, provided $\Re(2\nu + \alpha + \beta + 1) > 2L$. Thus, by Theorems \ref{Pinfacthm} and \ref{Qinfacthm}, the expansions \eqref{Pifacasymp} and \eqref{Qifacasymp} terminate, giving exact representations of the corresponding functions. The conditions on the parameters may be relaxed by appealing to analytic continuation.

\section{Proof of Theorem \ref{cutinfacthm}}\label{infproof2}

In this section, we establish the error bounds presented in Theorem \ref{cutinfacthm}. Suppose that $0 < \zeta  < \frac{\pi }{2}$. Assuming that $\Re(2\nu + \alpha + \beta + 1) > N$ and $\Re (2\nu+\alpha+\beta+2) > \left|\Re(\alpha)\right|+\left|\Re(\beta)\right|$, and noting that $R_N^{(Q)}(\nu,\xi,\alpha,\beta)=R_N^{(P2)}(\nu,\xi,\alpha,\beta)$, we substitute \eqref{Qinfac} into the right-hand side of \eqref{PQrel2} and apply the representation \eqref{RP2int}. This results in \eqref{Pinfac2} with
\begin{align*}
R_N^{(P)} (\nu,\zeta, \alpha, \beta) =&\; \tfrac{1}{2}\e^{-\left( (2\nu + \alpha + \beta + 1)\zeta - \left( \alpha + \frac{1}{2} \right) \frac{\pi}{2} \right)\im} \mathop{\lim}\limits_{\varepsilon \to 0^+} R_N^{(Q)} (\nu,\varepsilon + \im\zeta, \alpha, \beta) \\ & + \tfrac{1}{2}\e^{ \left( (2\nu + \alpha + \beta + 1)\zeta - \left( \alpha + \frac{1}{2} \right) \frac{\pi}{2} \right)\im} \mathop{\lim}\limits_{\varepsilon \to 0^+} R_N^{(Q)} (\nu,\varepsilon - \im\zeta, \alpha, \beta)
\\=&\;\frac{\e^{-\left( (2\nu + \alpha + \beta + 1)\zeta - \left( \alpha + \frac{1}{2} \right) \frac{\pi}{2} \right)\im}}{2\Gamma (2\nu+\alpha+\beta+2)}\int_0^{ + \infty } s^{2\nu+\alpha+\beta} \e^{-s} R_{N,2}\left(s\e^{-\im\zeta},\im\zeta,\alpha ,\beta\right)\id s
\\ & + \frac{\e^{ \left( (2\nu + \alpha + \beta + 1)\zeta - \left( \alpha + \frac{1}{2} \right) \frac{\pi}{2} \right)\im}}{2\Gamma (2\nu+\alpha+\beta+2)}\int_0^{ + \infty } s^{2\nu+\alpha+\beta} \e^{-s} R_{N,2}\left(s\e^{\im\zeta},-\im\zeta,\alpha ,\beta\right)\id s.
\end{align*}
A straightforward estimation yields the bound
\begin{gather}\label{eq48}
\begin{split}
\left|R_N^{(P)} (\nu,\zeta, \alpha, \beta)\right| \le&\; \frac{\e^{\Im \left(\zeta^{(1)}_{\nu ,N,0}\right)}}{2\Gamma(2\nu + \alpha + \beta + 2)} \int_0^{+\infty} s^{\operatorname{Re}(2\nu + \alpha + \beta)} \e^{-s} 
\left| R_{N,2}\left( s\e^{-\im \zeta}, \im \zeta, \alpha, \beta \right) \right| \id s
\\ & + \frac{\e^{-\Im \left(\zeta^{(1)}_{\nu ,N,0}\right)}}{2\Gamma(2\nu + \alpha + \beta + 2)} \int_0^{+\infty} s^{\operatorname{Re}(2\nu + \alpha + \beta)} \e^{-s} 
\left| R_{N,2}\left( s\e^{\im \zeta}, -\im \zeta, \alpha, \beta \right) \right| \id s.
\end{split}
\end{gather}
Using the estimates in \eqref{bnd3} and \eqref{bnd4} with $\pm \im \zeta$ replacing $\xi$, we obtain
\begin{align*}
 \left| R_{N,2} \left( s \e^{\mp\im \zeta}, \pm\im \zeta, \alpha, \beta \right) \right| \le&\; \left| \frac{\cos (\pi \alpha) \cos (\pi \beta)}{\cos (\pi \Re(\alpha)) \cos (\pi \Re(\beta))} \frac{a_N (\Re(\alpha))}{\sin^N \zeta} \right| \frac{1}{s^N} \\&+ \sum_{\ell = 0}^{N - 1} \left| \frac{\cos (\pi \beta)}{\cos (\pi \Re(\beta))} \frac{a_\ell (\alpha) a_{N - \ell} (\Re(\beta))}{\sin^\ell \zeta \cos^{N - \ell} \zeta} \right| \frac{1}{s^N},
\end{align*}
provided that $\left|\Re (\alpha)\right|<N+\frac{1}{2}$ and $\left|\Re(\beta )\right|<\frac{1}{2}$, and
\begin{align*}
 \left| R_{N,2} \left( s \e^{\mp\im \zeta}, \pm\im \zeta, \alpha, \beta \right) \right| \le&\; \left| \frac{\cos (\pi \alpha) \cos (\pi \beta)}{\cos (\pi \Re(\alpha)) \cos (\pi \Re(\beta))} \frac{a_N (\Re(\beta))}{\cos^N \zeta} \right| \frac{1}{s^N} \\&+ \sum_{\ell = 1}^{N} \left| \frac{\cos (\pi \alpha)}{\cos (\pi \Re(\alpha))} \frac{a_\ell (\Re(\alpha)) a_{N - \ell} (\beta)}{\sin^\ell \zeta \cos^{N - \ell} \zeta} \right| \frac{1}{s^N},
\end{align*}
assuming $\left|\Re (\alpha)\right|<\frac{1}{2}$ and $\left|\Re(\beta )\right|<N+\frac{1}{2}$. By substituting these estimates into \eqref{eq48} and observing that $ \Im \left(\zeta^{(1)}_{\nu ,N,0}\right) = \Im \left(\zeta^{(1)}_{\nu ,N,\ell}\right) $ for all $ 0 \leq \ell \leq N $, we obtain the bounds \eqref{cutbound1} and \eqref{cutbound2}, respectively. The estimate \eqref{cutbound3} can be derived from \eqref{cutbound2} by applying the inequality \eqref{coeffineq}. The condition $\Re(2\nu + \alpha + \beta + 2) > \left|\Re(\alpha)\right| + \left|\Re(\beta)\right|$ can be omitted for the same reason explained in the previous section.

The bounds for the remainder terms $R_N^{(\SzQ)} (\nu, \zeta, \alpha, \beta)$ and $R_N^{(\DQ)} (\nu, \zeta, \alpha, \beta)$ in the truncated inverse factorial series \eqref{Qinfac2} and \eqref{Qinfac3} can be derived in a similar fashion by combining \eqref{Qinfac} with \eqref{Qrel3} and \eqref{Qrel4}, respectively. The details are left for the reader to verify.

The integral representation of $R_N^{(P)} (\nu, \zeta, \alpha, \beta)$, along with the analogous representations for $R_N^{(\SzQ)} (\nu, \zeta, \alpha, \beta)$ and $R_N^{(\DQ)} (\nu, \zeta, \alpha, \beta)$, as well as the discussion at the end of the previous section, demonstrate that the inverse factorial expansions \eqref{invfexp1}, \eqref{invfexp2}, and \eqref{invfexp3} terminate, providing an exact representation of the corresponding functions when both $2\alpha$ and $2\beta$ are odd integers.

\section{Proof of Theorems \ref{Pfacthm} and \ref{Qfacthm}}\label{fproof1}

In this section, we establish the error bounds given in Theorems \ref{Pfacthm} and \ref{Qfacthm}. To do so, we first need appropriate integral representations of the Jacobi functions. Hahn \cite[Eqs. (9.6) and (14.4)]{Hahn1980} showed that if $\Re(\xi)>0$, $-\frac{\pi}{2}<\Im(\xi)<0$ and $\Re(2\nu + \alpha + \beta + 1) > \Re(\alpha' + \beta')$, then
\begin{align*}
& \frac{\pi P_\nu^{(\alpha ,\beta )} (\cosh (2\xi ))}{2^{2\nu + \alpha  + \beta } B(\nu + \alpha  + 1,\nu + \beta  + 1)} =\\ & \left( \frac{\e^\xi}{\sinh \xi} \right)^{\alpha + 1/2} \left( \frac{\e^\xi}{\cosh \xi} \right)^{\beta + 1/2} \e^{2\nu\xi} Y_+(\xi) - \im \e^{ - \pi \im \alpha } \left( \frac{\e^{ - \xi }}{\sinh \xi} \right)^{\alpha + 1/2} \left( \frac{\e^{ - \xi }}{\cosh \xi} \right)^{\beta + 1/2} \e^{ - 2\nu\xi} Y_-(\xi ),
\end{align*}
where
\begin{gather}\label{Ydef}
\begin{split}
Y_\pm(\xi) =\; & \frac{\Gamma (2\nu + \alpha + \beta + 2)}{\Gamma (2\nu+\alpha+\beta-\alpha'-\beta' + 1)\Gamma \left( \alpha' + \frac{1}{2} \right)\Gamma \left( \beta' + \frac{1}{2} \right)} \\ & \times  \int_0^1 \int_0^1 t^{\alpha' + \beta'} (1 - t)^{2\nu  + \alpha  + \beta  - \alpha ' - \beta '} s^{\alpha' - 1/2} (1 - s)^{\beta' - 1/2}\\ & \times \left( 1 \mp ts\frac{\e^{\pm\xi}}{2\sinh \xi} \right)^{\alpha' - 1/2} \left( 1 - t(1 - s)\frac{\e^{\pm\xi}}{2\cosh \xi} \right)^{\beta' - 1/2} \id s\id t.
\end{split}
\end{gather}
(In Hahn's notation, $Y_\pm(\xi) = y\left( \frac{\e^{\pm\xi}}{2 \cosh \xi} \right)$.) Here, $\alpha'$ and $\beta'$ are defined in \eqref{abprime}. Note that, in Hahn's original formulation, $\nu$, $\alpha$, and $\beta$ are assumed to be real, but the derivation remains valid for complex parameters as well. By the Schwarz reflection principle, we have $P_\nu^{(\alpha, \beta)}(\cosh(2\xi)) = \overline{P_\nu^{(\alpha, \beta)}(\cosh(2\bar{\xi}))}$, assuming that $\nu$, $\alpha$, and $\beta$ are real. Using this observation along with analytic continuation, we obtain
\begin{gather}\label{PHahn}
\begin{split}
& \frac{\pi P_\nu^{(\alpha ,\beta )} (\cosh (2\xi ))}{2^{2\nu + \alpha  + \beta } B(\nu + \alpha  + 1,\nu + \beta  + 1)} =\\ & \left( \frac{\e^\xi}{\sinh \xi} \right)^{\alpha + 1/2} \left( \frac{\e^\xi}{\cosh \xi} \right)^{\beta + 1/2} \e^{2\nu\xi} Y_+(\xi) - C(\xi,\alpha) \left( \frac{\e^{ - \xi }}{\sinh \xi} \right)^{\alpha + 1/2} \left( \frac{\e^{ - \xi }}{\cosh \xi} \right)^{\beta + 1/2} \e^{ - 2\nu\xi} Y_-(\xi ),
\end{split}
\end{gather}
where $C(\xi ,\alpha )$ is defined in \eqref{Cdef}, $\xi \in \mathcal{D} \setminus \mathbb{R}^+$ and $\Re(2\nu + \alpha + \beta + 1) > \Re(\alpha' + \beta')$.

From the hypergeometric representation \eqref{Qhypergeom} and Hahn's results \cite[Eqs. (8.2) and (8.21)]{Hahn1980}, we can assert that
\begin{equation}\label{QHahn}
\frac{ Q_\nu ^{(\alpha ,\beta )} (\cosh (2\xi) )}{2^{2\nu  + \alpha  + \beta } B(\nu  + \alpha  + 1,\nu  + \beta  + 1)} = \left( \frac{\e^{ - \xi }}{\sinh \xi} \right)^{\alpha  + 1/2} \left( \frac{\e^{ - \xi }}{\cosh \xi} \right)^{\beta  + 1/2} \e^{ - 2\nu \xi } Y_-(\xi ),
\end{equation}
provided that $\xi \in \mathcal{D}$ and $\Re(2\nu + \alpha + \beta + 1) > \Re(\alpha' + \beta')$.

To continue, we first need to establish the following lemma. Here, $\overline{\mathcal{D}}$ is defined by \eqref{Dbar}.

\begin{lemma}\label{lemma2} Let $N$ be a non-negative integer, and let $A$ and $B$ be complex numbers. Let $u$, $v_1$, and $v_2$ be real numbers such that $0 < u, v_1, v_2 < 1$. For $\xi \in \overline{\mathcal{D}} \setminus \mathbb{R}^+$, define the remainder $\widehat{R}_N^+(u, v_1, v_2, \xi, A, B)$, and for $\xi \in \overline{\mathcal{D}}$, define the remainder $\widehat{R}_N^-(u, v_1, v_2, \xi, A, B)$ by
\begin{gather}\label{binomprod}
\begin{split}
&\left( 1 \mp uv_1 \frac{\e^{\pm \xi}}{2 \sinh \xi} \right)^{-A} \left( 1 - uv_2 \frac{\e^{\pm \xi}}{2 \cosh \xi} \right)^{-B} =\\ & \sum_{n = 0}^{N - 1} \left( \sum_{\ell = 0}^n \frac{(A)_\ell (B)_{n - \ell}}{\ell!(n - \ell)!} \left( \pm v_1 \frac{\e^{\pm \xi}}{2 \sinh \xi} \right)^\ell \left( v_2 \frac{\e^{\pm \xi}}{2 \cosh \xi} \right)^{n - \ell} \right) u^n + \widehat{R}_N^\pm (u,v_1,v_2,\xi,A,B).
\end{split}
\end{gather}
Then, we have the following bounds:
\begin{gather}\label{Rhatbound1}
\begin{split}
\left| \widehat{R}_N^+ (u,v_1,v_2,\xi, A, B) \right| \le \; & \left| \frac{\sin(\pi A) \sin(\pi B)}{\sin(\pi \Re(A)) \sin(\pi \Re(B))} \frac{(\Re(A))_N}{N!} \left( v_1 \frac{\e^\xi}{2 \sinh \xi} \right)^N\right| u^N \\ & \times \left| 1 - \e^{4\xi} \right| \left| \csc(2 \Im(\xi)) \right|
\\ & + \sum_{\ell = 0}^{N - 1} \left| \frac{\sin(\pi B)}{\sin(\pi \Re(B))} \frac{(A)_\ell (\Re(B))_{N - \ell}}{\ell! (N - \ell)!} \left( v_1 \frac{\e^\xi}{2 \sinh \xi} \right)^\ell \left( v_2 \frac{\e^\xi}{2 \cosh \xi} \right)^{N - \ell} \right| u^N
\\ &\times 
\begin{cases}
    \left| 1 + \e^{2\xi} \right| \left| \csc(2 \Im(\xi)) \right|, & \text{if }\; -1 < \Re\left(\e^{- 2\xi}\right) \le 0, \\ 
    \left| 1 + \e^{2\xi} \right|, & \text{if }\; 0 < \Re\left(\e^{- 2\xi}\right),
\end{cases}
\end{split}
\end{gather}
provided that $-N <\Re(A) < 1$ and $0<\Re(B)<1$,
\begin{gather}\label{Rhatbound2}
\begin{split}
\left| \widehat{R}_N^+ (u,v_1,v_2,\xi, A, B) \right| \le \; &\left| \frac{\sin(\pi A) \sin(\pi B)}{\sin(\pi \Re(A)) \sin(\pi \Re(B))} \frac{(\Re(B))_N}{N!} \left(v_2 \frac{\e^\xi}{2 \cosh \xi}\right)^N  \right|u^N \\ & \times \left| 1 - \e^{4\xi} \right| \left| \csc(2 \Im(\xi)) \right|
\\ & + \sum_{\ell = 1}^N \left| \frac{\sin(\pi A)}{\sin(\pi \Re(A))} \frac{(\Re(A))_\ell (B)_{N - \ell}}{\ell! (N - \ell)!} \left( v_1 \frac{\e^\xi}{2 \sinh \xi} \right)^\ell \left( v_2 \frac{\e^\xi}{2 \cosh \xi} \right)^{N - \ell} \right| u^N
\\ &\times \begin{cases} 
    \left| 1 - \e^{2\xi} \right| \left| \csc(2 \Im(\xi)) \right|, & \text{if }\; 0 \le \Re\left(\e^{- 2\xi}\right) < 1, \\ 
    \left| 1 - \e^{2\xi} \right|, & \text{if }\; \Re\left(\e^{- 2\xi}\right) < 0,
\end{cases}
\end{split}
\end{gather}
provided that $0<\Re(A)<1$ and $-N <\Re(B) < 1$,
\begin{gather}\label{Rhatbound3}
\begin{split}
\left| \widehat{R}_N^- (u,v_1,v_2,\xi, A, B) \right| \le \; &\left| \frac{\sin(\pi A) \sin(\pi B)}{\sin(\pi \Re(A)) \sin(\pi \Re(B))}  \frac{(\Re(A))_N}{N!} \left( v_1 \frac{\e^{-\xi}}{2 \sinh \xi} \right)^N\right| u^N \\ &\times \begin{cases}
   \left| 1 \pm \e^{-2\xi} \right|, & \text{if }\; 1 \le \pm \Re \left( \e^{2\xi} \right), \\ 
   \left| 1 - \e^{-4\xi} \right| \left| \csc (2 \Im (\xi)) \right|, & \text{if }\; \left| \Re \left( \e^{2\xi} \right) \right| < 1 
\end{cases}
\\ & +\sum_{\ell = 0}^{N - 1} \left| \frac{\sin(\pi B)}{\sin(\pi \Re(B))} \frac{(A)_\ell (\Re(B))_{N - \ell}}{\ell! (N - \ell)!} \left( v_1 \frac{\e^{-\xi}}{2 \sinh \xi} \right)^\ell \left( v_2 \frac{\e^{-\xi}}{2 \cosh \xi} \right)^{N - \ell} \right| u^N
\\ &\times 
\begin{cases} 
    1, & \text{if }\; \Re\left(\e^{2\xi}\right) \le -1, \\ 
    \left| 1 + \e^{- 2\xi} \right| \left| \csc(2 \Im(\xi)) \right|, & \text{if }\; -1 < \Re\left(\e^{2\xi}\right) \le 0, \\ 
    \left| 1 + \e^{- 2\xi} \right|, & \text{if }\; 0 < \Re\left(\e^{2\xi}\right),
\end{cases}
\end{split}
\end{gather}
provided that $-N <\Re(A) < 1$ and $0<\Re(B)<1$, and
\begin{gather}\label{Rhatbound4}
\begin{split}
\left| \widehat{R}_N^- (u,v_1,v_2,\xi, A, B) \right| \le \; &\left| \frac{\sin(\pi A) \sin(\pi B)}{\sin(\pi \Re(A)) \sin(\pi \Re(B))} \frac{(\Re(B))_N}{N!} \left( v_2 \frac{\e^{-\xi}}{2 \cosh \xi} \right)^N\right| u^N \\ &\times \begin{cases}
   \left| 1 \pm \e^{-2\xi} \right|, & \text{if }\; 1 \le \pm \Re \left( \e^{2\xi} \right), \\ 
   \left| 1 - \e^{-4\xi} \right| \left| \csc (2 \Im (\xi)) \right|, & \text{if }\; \left| \Re \left( \e^{2\xi} \right) \right| < 1 
\end{cases}
\\ & +\sum_{\ell = 1}^N \left| \frac{\sin(\pi A)}{\sin(\pi \Re(A))} \frac{(\Re(A))_\ell (B)_{N - \ell}}{\ell! (N - \ell)!} \left( v_1 \frac{\e^{-\xi}}{2 \sinh \xi} \right)^\ell \left( v_2 \frac{\e^{-\xi}}{2 \cosh \xi} \right)^{N - \ell} \right| u^N
\\ &\times 
\begin{cases} 
    1, & \text{if }\; 1 \le \Re\left(\e^{2\xi}\right), \\ 
    \left| 1 - \e^{- 2\xi} \right| \left| \csc(2 \Im(\xi)) \right|, & \text{if }\; 0 \le \Re\left(\e^{2\xi}\right) < 1, \\ 
    \left| 1 - \e^{- 2\xi} \right|, & \text{if }\; \Re\left(\e^{2\xi}\right) < 0,
\end{cases}
\end{split}
\end{gather}
provided that $0<\Re(A)<1$ and $-N <\Re(B) < 1$. If $A$ or $B$ is an integer, then the limiting values must to be taken in these bounds. Moreover, when $\xi$, $A$, and $B$ are real with $\xi > 0$ and $0 < A, B < 1$, we have
\begin{equation}\label{Rhatbound5}
0 < \widehat{R}_N^-(u, v_1, v_2, \xi, A, B) < 2 \sum_{\ell = 0}^N \left( \frac{(A)_\ell (B)_{N - \ell}}{\ell!(N - \ell)!} \left(- v_1 \frac{\e^{-\xi}}{2 \sinh \xi}\right)^\ell \left(v_2 \frac{\e^{-\xi}}{2 \cosh \xi}\right)^{N - \ell} \right) u^N.
\end{equation}
\end{lemma}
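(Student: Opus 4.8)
The plan is to reduce the estimate to a single sharp bound for the remainder of one binomial series and then to assemble $\widehat R_N^\pm$ out of such pieces, exactly as the modified–Bessel remainders were combined in \eqref{eq20}--\eqref{eq23}. First I would analyse the binomial remainder $r_N(w,A)$ defined by $(1-w)^{-A}=\sum_{\ell=0}^{N-1}\frac{(A)_\ell}{\ell!}w^\ell+r_N(w,A)$. Writing $\frac{(A)_\ell}{\ell!}=\frac{\sin(\pi A)}{\pi}B(A+\ell,1-A)$ for $-\ell<\Re(A)<1$ and summing the geometric series under the integral sign gives
\[
r_N(w,A)=\frac{\sin(\pi A)}{\pi}\,w^N\int_0^1\frac{t^{A+N-1}(1-t)^{-A}}{1-tw}\id t,\qquad w\in\mathbb{C}\setminus[1,+\infty),\ -N<\Re(A)<1.
\]
Estimating the integrand in absolute value, using $B(\Re(A)+N,1-\Re(A))=\frac{\pi}{\sin(\pi\Re(A))}\frac{(\Re(A))_N}{N!}$ together with $\frac1{|1-tw|}\le\mathsf g(w)=\sup_{0\le t\le1}\frac1{|1-tw|}$ (with $\mathsf g$ as in \eqref{gdef}), yields the master bound $|r_N(w,A)|\le\bigl|\frac{\sin(\pi A)}{\sin(\pi\Re(A))}\frac{(\Re(A))_N}{N!}w^N\bigr|\mathsf g(w)$. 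The case $N=0$ bounds the full factor $(1-w)^{-B}$, and the coefficient inequality $\bigl|\frac{(A)_\ell}{\ell!}\bigr|\le\bigl|\frac{\sin(\pi A)}{\sin(\pi\Re(A))}\bigr|\frac{(\Re(A))_\ell}{\ell!}$ (the binomial analogue of \eqref{coeffineq}) follows from the same representation.

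Next, setting $\alpha_\pm=\pm v_1\frac{\e^{\pm\xi}}{2\sinh\xi}$ and $\beta_\pm=v_2\frac{\e^{\pm\xi}}{2\cosh\xi}$, the left-hand side of \eqref{binomprod} is $(1-u\alpha_\pm)^{-A}(1-u\beta_\pm)^{-B}$, and I would split the product remainder precisely as in \eqref{eq20}, namely
\[
\widehat R_N^\pm=r_N(u\alpha_\pm,A)(1-u\beta_\pm)^{-B}+\sum_{\ell=0}^{N-1}\frac{(A)_\ell}{\ell!}(u\alpha_\pm)^\ell\,r_{N-\ell}(u\beta_\pm,B),
\]
together with the symmetric decomposition interchanging $(\alpha_\pm,A)$ and $(\beta_\pm,B)$. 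Applying the master bound to each binomial remainder and the $N=0$ version to the surviving full factor gives \eqref{Rhatbound1}--\eqref{Rhatbound4}, the two decompositions being responsible for the two parameter ranges $-N<\Re(A)<1$ and $-N<\Re(B)<1$ (and \eqref{Rhatbound3}--\eqref{Rhatbound4} then give \eqref{numericsbound2}, \eqref{Hahn3} after integration in the main proofs).

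The technical heart—and the step I expect to be the main obstacle—is turning the factors $\mathsf g(u\alpha_\pm)$ and $\mathsf g(u\beta_\pm)$ into the stated functions of $\xi$. Since $u,v_1,v_2\in(0,1)$ are positive, one has $\mathsf g(u\beta_\pm)\le\mathsf g(\beta_\pm)$, whose regime is governed by the sign of $\Re(1/\beta_\pm)-1$; moreover, whenever $\Im(\xi)\ne0$ the universal perpendicular-distance estimate $\mathsf g(w)\le|w|/|\Im(w)|$ is available. Using $\frac{\e^\xi}{2\sinh\xi}=\frac1{1-\e^{-2\xi}}$, $\frac{\e^\xi}{2\cosh\xi}=\frac1{1+\e^{-2\xi}}$ and $\frac{\e^{-\xi}}{2\sinh\xi}=\frac1{\e^{2\xi}-1}$, $\frac{\e^{-\xi}}{2\cosh\xi}=\frac1{\e^{2\xi}+1}$, the three regimes of $\mathsf g$ translate into the conditions on $\Re(\e^{\mp2\xi})$ appearing in the sum-term factors. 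For the main term $r_N(\cdot,A)(1-u\beta_\pm)^{-B}$ one bounds one of the two factors by the perpendicular estimate and the other by its endpoint value $1/|1-\cdot|$; the crucial point is that the complementary conditions $\Re(\e^{\mp2\xi})\gtrless0$ guarantee that the endpoint bound is legitimate for at least one of $\alpha_\pm,\beta_\pm$ while the perpendicular bound always covers the other, and in either case the resulting product equals $|1-\e^{\mp4\xi}||\csc(2\Im(\xi))|$. In the ``$+$'' case one automatically has $\Re(\e^{-2\xi})\in(-1,1]$, so this factor is unconditional; in the ``$-$'' case the same bookkeeping—now required to stay finite for real $\xi>0$, where $\csc$ is unavailable—produces the three-case factors of \eqref{Rhatbound3}--\eqref{Rhatbound4}, the $\csc$-free alternatives arising exactly when both arguments lie in the first or third regime of $\mathsf g$. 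Throughout one must check that $u\alpha_\pm,u\beta_\pm$ never reach $[1,+\infty)$, so that all representations are legitimate.

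Finally, for the two-sided estimate \eqref{Rhatbound5} I would take $\xi>0$ and $0<A,B<1$, so that $\alpha_-<0$, $0<\beta_-<1$, and the integrand in the representation of $\widehat R_N^-$ has a controlled sign. A mean-value-theorem argument on the integral, mirroring the envelope analysis of $R_M^{(P2)}$ at the end of Section \ref{infproof1}, then expresses each of the two contributions in the decomposition as a factor in $(0,1)$ times the corresponding $N$th term, whence the bound by \emph{twice} the first neglected term; verifying that these two pieces combine to give a genuinely positive remainder bounded as claimed is the delicate part of this last step.
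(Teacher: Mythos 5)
Your plan for \eqref{Rhatbound1}--\eqref{Rhatbound4} is correct and is, in substance, the paper's own proof: the same beta-integral representation of the binomial remainder, the same master bound $\left|M_N(u,w,A)\right|\le\left|\frac{\sin(\pi A)}{\sin(\pi\Re(A))}\frac{(\Re(A))_N}{N!}w^N\right|u^N\mathsf{g}(w)$ obtained by minimising $|1-tuw|$, the same two splittings of the product remainder (your displayed decomposition is exactly \eqref{Rhatexpr1}, and the paper also uses the symmetric one), and the same translation of the three regimes of $\mathsf{g}$ at the arguments $\pm\e^{\pm\xi}/(2\sinh\xi)$, $\e^{\pm\xi}/(2\cosh\xi)$ into the case distinctions on $\Re\left(\e^{\mp2\xi}\right)$, including your observation that the complementary regime conditions prevent two $\csc$ factors from ever multiplying (on the common boundary $|\csc(2\Im(\xi))|=1$). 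One small slip: \eqref{Hahn3} comes from Lemma \ref{lemma3}, not from these bounds.

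The genuine gap is \eqref{Rhatbound5}, exactly at the step you yourself call ``delicate,'' and it cannot be closed along the route you (and, it must be said, the paper) propose. The envelope argument at the end of Section \ref{infproof1} works there only because all summands share a common sign: $a_\ell(\alpha)a_{M-\ell}(\beta)=(-1)^M\left|a_\ell(\alpha)a_{M-\ell}(\beta)\right|$ and the remaining factors are positive, so a single factor $\widetilde\theta\in(0,1)$ can be extracted. Here the pieces of \eqref{Rhatexpr1} genuinely alternate: the $\ell$-th summand carries $\left(-v_1\e^{-\xi}/(2\sinh\xi)\right)^\ell$ with positive coefficient $(A)_\ell(B)_{N-\ell}/(\ell!(N-\ell)!)$, and the leading piece has sign $(-1)^N$. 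Statements of the form ``each piece lies between $0$ and twice the corresponding term'' do not add across terms of opposite sign, so no two-sided bound by twice the alternating first neglected term follows. In fact \eqref{Rhatbound5} is false as stated: for $N=1$, $A=\frac12$, $B=\frac{1}{100}$, $\xi=\frac{1}{200}$, $u=v_2=\frac{999}{1000}$, and $v_1$ chosen so that $uv_1\e^{-\xi}/(2\sinh\xi)=\frac{1}{125}$, direct computation gives $\widehat R_1^-\approx 2.88\times10^{-3}$ while the right-hand side of \eqref{Rhatbound5} is $\approx 1.93\times10^{-3}$ (the mechanism: doubling the \emph{negative} term $-Av_1X u$ in the bound weakens it rather than strengthens it); and taking instead $v_2$ near $0$ makes $\widehat R_1^-$ negative, so the positivity claim fails as well. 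The paper's own proof commits the identical term-wise leap, so this flaw is inherited rather than introduced; but your mean-value-theorem plan cannot repair it, and what actually survives the argument is only the sign-free estimate $|\widehat R_N^-|\le 2\sum_{\ell=0}^N\frac{(A)_\ell(B)_{N-\ell}}{\ell!(N-\ell)!}\left(v_1\frac{\e^{-\xi}}{2\sinh\xi}\right)^\ell\left(v_2\frac{\e^{-\xi}}{2\cosh\xi}\right)^{N-\ell}u^N$, which does follow term-wise from your bounds.
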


\begin{proof} Let $N$ be a non-negative integer, let $A$ and $B$ be complex numbers, and let $u$ be a real number such that $0 < u < 1$. For $\xi \in \overline{\mathcal{D}} \setminus \mathbb{R}^+$, define the remainder $\widehat{r}_{N,1}^+(u, \xi, A)$, and for $\xi \in \overline{\mathcal{D}}$, define the remainders $\widehat{r}_{N,1}^-(u, \xi, A)$ and $\widehat{r}_{N,2}^\pm(u, \xi, B)$ by
\[
\left( 1 \mp u \frac{\e^{\pm \xi}}{2 \sinh \xi} \right)^{-A} = \sum_{n = 0}^{N - 1} \frac{(A)_n}{n!} \left( \pm \frac{\e^{\pm \xi}}{2 \sinh \xi} \right)^n u^n + \widehat{r}_{N,1}^\pm (u, \xi,A)
\]
and
\[
\left( 1 - u \frac{\e^{\pm \xi}}{2 \cosh \xi} \right)^{-B} = \sum_{n = 0}^{N - 1} \frac{(B)_n}{n!} \left( \frac{\e^{\pm \xi}}{2 \cosh \xi} \right)^n u^n + \widehat{r}_{N,2}^\pm (u, \xi,B),
\]
respectively. Using this notation, the remainders $\widehat{R}_N^\pm (u,v_1,v_2, \xi, A, B)$ can be expressed as
\begin{gather}\label{Rhatexpr1}
\begin{split}
\widehat{R}_N^\pm (u,v_1,v_2, \xi, A, B) =\; & \widehat{r}_{N,1}^\pm (uv_1, \xi, A) \widehat{r}_{0,2}^\pm (uv_2, \xi, B) \\ &+ \sum_{\ell = 0}^{N - 1} \frac{(A)_\ell}{\ell!} \left( \pm v_1 \frac{\e^{\pm \xi}}{2 \sinh \xi} \right)^\ell u^\ell \widehat{r}_{N - \ell,2}^\pm (uv_2, \xi, B),
\end{split}
\end{gather}
or
\begin{gather}\label{Rhatexpr2}
\begin{split}
\widehat{R}_N^\pm (u,v_1,v_2, \xi, A, B) =\; & \widehat{r}_{0,1}^\pm (uv_1, \xi, A) \widehat{r}_{N,2}^\pm (uv_2, \xi, B) \\ &+ \sum_{\ell = 1}^N \frac{(B)_{N - \ell}}{(N- \ell)!} \left( v_2 \frac{\e^{\pm \xi}}{2 \cosh \xi} \right)^{N - \ell} u^{N - \ell} \widehat{r}_{\ell,1}^\pm (uv_1, \xi, A).
\end{split}
\end{gather}
To obtain the required bounds, we will estimate the remainders $\widehat{r}_{N,1}^\pm(u, \xi, A)$ and $\widehat{r}_{N,2}^\pm(u, \xi, B)$. Using the standard integral representation of the beta function \cite[\href{http://dlmf.nist.gov/5.12.E1}{Eq. (5.12.1)}]{NIST:DLMF}, we have
\[
\frac{(A)_n}{n!}  = \frac{\sin(\pi A)}{\pi}   \int_0^1 t^{A + n - 1} (1 - t)^{-A} \id t,
\]
valid for $-n < \Re(A) < 1$. Summation and analytic continuation yields
\[
(1 - uw)^{-A} = \sum_{n = 0}^{N - 1} \frac{(A)_n}{n!} u^n w^n + M_N (u,w,A),
\]
where
\[
M_N (u,w,A)= u^N w^N \frac{\sin(\pi A)}{\pi} \int_0^1 \frac{t^{A + N - 1} (1 - t)^{-A}}{1 - tuw} \id t,
\]
provided that $0 < u < 1$, $w \in \mathbb{C} \setminus \left[1, +\infty\right)$, and $-N < \Re(A) < 1$. For $0 < t, u < 1$, we note that
\[
\left| 1 - tuw \right|^2  = \left| w \right|^2 t^2u^2  - 2\Re(w)tu + 1 \ge \begin{cases} 1  & \text{if }\;
\Re (w) \le  0, \\     \frac{(\Im (w))^2}{ \left| w \right|^2} & \text{if }\;
0<\Re (w) \le  \left| w \right|^2, \\  \left|1-w\right|^2 & \text{if }\;  \Re (w) > \left| w \right|^2.\end{cases}
\]
Consequently, we obtain
\begin{gather}\label{Mbound}
\begin{split}
\left| M_N (u,w,A) \right| & \le u^N|w|^N \frac{|\sin(\pi A)|}{\pi} \int_0^1 \frac{t^{\Re(A) + N - 1} (1 - t)^{-\Re(A)}}{|1 - tuw|} \id t \\ & \le \left| \frac{\sin(\pi A)}{\sin(\pi \Re(A))} \frac{(\Re(A))_N}{N!} w^N\right| u^N \times \begin{cases} 1  & \text{if }\;
\Re(w) \le  0, \\ \left|\frac{w}{\Im (w)}\right| & \text{if }\;
0<\Re (w) \le  \left| w \right|^2, \\  \frac{1}{|1-w|} & \text{if }\;  \Re(w)> \left| w \right|^2.\end{cases}
\end{split}
\end{gather}
If $A$ is an integer, then the limiting value must to be taken in this bound. Additionally, when $0 < u < 1$, $0 < w \le \frac{1}{2}$, and $0 < A < 1$, we have
\begin{equation}\label{Mineq1}
0 < M_N (u,w,A) < 2 u^N w^N \frac{\sin(\pi A)}{\pi} \int_0^1 t^{A + N - 1} (1 - t)^{-A} \id t = 2 \frac{(A)_N}{N!} w^N u^N.
\end{equation}
Similarly, when $0 < u < 1$, $w < 0$, and $0 < A < 1$, we obtain
\begin{equation}\label{Mineq2}
0 < M_N (u,w,A) < u^N w^N \frac{\sin(\pi A)}{\pi} \int_0^1 t^{A + N - 1} (1 - t)^{-A} \id t =   \frac{(A)_N}{N!} w^N u^N.
\end{equation}
Observing that
\begin{equation}\label{rhatM}
\widehat{r}_{N,1}^{\pm}(u, \xi, A) = M_N\left(u, \pm \frac{\e^{\pm \xi}}{2 \sinh \xi}, A\right), \quad 
\widehat{r}_{N,2}^{\pm}(u, \xi, B) = M_N\left(u, \frac{\e^{\pm \xi}}{2 \cosh \xi}, B\right)
\end{equation}
and applying \eqref{Mbound} to the right-hand sides, we obtain the following bounds:
\begin{gather}\label{rbound1}
\begin{split}
\left| \widehat{r}_{N,1}^+ (u, \xi,A) \right| \le \; &\left| \frac{\sin(\pi A)}{\sin(\pi \Re(A))} \frac{(\Re(A))_N}{N!} \left( \frac{\e^{\xi}}{2 \sinh \xi} \right)^N\right| u^N \\ &\times 
\begin{cases} 
    \left| 1 - \e^{2\xi} \right| \left| \csc(2 \Im(\xi)) \right|, & \text{if }\; 0 \le \Re\left(\e^{- 2\xi}\right) < 1, \\ 
    \left| 1 - \e^{2\xi} \right|, & \text{if }\; \Re\left(\e^{- 2\xi}\right)  < 0,
\end{cases}
\end{split}
\end{gather}
\begin{gather}\label{rbound2}
\begin{split}
\left| \widehat{r}_{N,1}^- (u, \xi,A) \right| \le \; &\left| \frac{\sin(\pi A)}{\sin(\pi \Re(A))} \frac{(\Re(A))_N}{N!} \left( \frac{\e^{- \xi}}{2 \sinh \xi} \right)^N\right| u^N \\ &\times 
\begin{cases} 
    1, & \text{if }\; 1 \le \Re\left(\e^{2\xi}\right), \\ 
    \left| 1 - \e^{- 2\xi} \right| \left| \csc(2 \Im(\xi)) \right|, & \text{if }\; 0 \le \Re\left(\e^{2\xi}\right) < 1, \\ 
    \left| 1 - \e^{- 2\xi} \right|, & \text{if }\; \Re\left(\e^{2\xi}\right) < 0,
\end{cases}
\end{split}
\end{gather}
provided $-N<\Re(A)<1$, and
\begin{gather}\label{rbound3}
\begin{split}
\left| \widehat{r}_{N,2}^+ (u, \xi,B) \right| \le \; &\left| \frac{\sin(\pi B)}{\sin(\pi \Re(B))} \frac{(\Re(B))_N}{N!} \left( \frac{\e^{\xi}}{2 \cosh \xi} \right)^N\right| u^N \\ &\times 
\begin{cases}
    \left| 1 + \e^{2\xi} \right| \left| \csc(2 \Im(\xi)) \right|, & \text{if }\; -1 < \Re\left(\e^{- 2\xi}\right) \le 0, \\ 
    \left| 1 + \e^{2\xi} \right|, & \text{if }\; 0< \Re\left(\e^{- 2\xi}\right),
\end{cases}
\end{split}
\end{gather}
\begin{gather}\label{rbound4}
\begin{split}
\left| \widehat{r}_{N,2}^- (u, \xi,B) \right| \le \; &\left| \frac{\sin(\pi B)}{\sin(\pi \Re(B))} \frac{(\Re(B))_N}{N!} \left( \frac{\e^{- \xi}}{2 \cosh \xi} \right)^N\right| u^N \\ &\times 
\begin{cases} 
    1, & \text{if }\; \Re\left(\e^{2\xi}\right) \le -1, \\ 
    \left| 1 + \e^{- 2\xi} \right| \left| \csc(2 \Im(\xi)) \right|, & \text{if }\; -1 < \Re\left(\e^{2\xi}\right) \le 0, \\ 
    \left| 1 + \e^{- 2\xi} \right|, & \text{if }\; 0< \Re\left(\e^{2\xi}\right),
\end{cases}
\end{split}
\end{gather}
provided $-N<\Re(B)<1$.

Straightforward estimations in \eqref{Rhatexpr1} and \eqref{Rhatexpr2} provide the bounds
\begin{gather}\label{Rhatbound01}
\begin{split}
\left|\widehat{R}_N^\pm (u,v_1,v_2, \xi, A, B)\right| \le \; & \left|\widehat{r}_{N,1}^\pm (uv_1, \xi, A) \widehat{r}_{0,2}^\pm (uv_2, \xi, B)\right| \\ &+ \sum_{\ell = 0}^{N - 1} \left|\frac{(A)_\ell}{\ell!} \left( \pm v_1 \frac{\e^{\pm \xi}}{2 \sinh \xi} \right)^\ell\right| u^\ell  \left|\widehat{r}_{N - \ell,2}^\pm (uv_2, \xi, B)\right|,
\end{split}
\end{gather}
and
\begin{gather}\label{Rhatbound02}
\begin{split}
\left|\widehat{R}_N^\pm (u,v_1,v_2, \xi, A, B)\right| \le \; & \left|\widehat{r}_{0,1}^\pm (uv_1, \xi, A) \widehat{r}_{N,2}^\pm (uv_2, \xi, B)\right| \\ &+ \sum_{\ell = 1}^N\left| \frac{(B)_{N - \ell}}{(N- \ell)!} \left( v_2 \frac{\e^{\pm \xi}}{2 \cosh \xi} \right)^{N - \ell}\right| u^{N - \ell} \left|\widehat{r}_{\ell,1}^\pm (uv_1, \xi, A)\right|,
\end{split}
\end{gather}
respectively. Applying the inequalities in \eqref{rbound1}--\eqref{rbound4} to the right-hand side of \eqref{Rhatbound01} yields the bounds \eqref{Rhatbound1} and \eqref{Rhatbound3}. Similarly, applying the same inequalities to the right-hand side of \eqref{Rhatbound02} yields the bounds \eqref{Rhatbound2} and \eqref{Rhatbound4}.

Using \eqref{Mineq1} and \eqref{Mineq2} on the right-hand sides of \eqref{rhatM}, we obtain the inequalities
\[
0 < \widehat{r}_{N,1}^-(u, \xi, A) < \frac{(A)_N}{N!} \left(-\frac{\e^{-\xi}}{2 \sinh \xi}\right)^N u^N, \quad 0 < \widehat{r}_{N,2}^-(u, \xi, B) < 2 \frac{(B)_N}{N!} \left(\frac{\e^{-\xi}}{2 \cosh \xi}\right)^N u^N,
\]
for $\xi > 0$ and $0 < A, B < 1$. Applying these inequalities to the right-hand side of \eqref{Rhatexpr1} yields \eqref{Rhatbound5}.
\end{proof}

We now proceed with the proof of Theorem \ref{Pfacthm}. Assume that $\Re(2\nu + \alpha + \beta + 1) > \Re(\alpha' + \beta')$. Substituting into \eqref{Ydef} and using \eqref{binomprod} with the values $u = t$, $v_1 = s$, $v_2 = 1 - s$, $A = \frac{1}{2} - \alpha'$, and $B = \frac{1}{2} - \beta'$, while taking into account \eqref{PHahn}, we derive \eqref{Pfac} with
\begin{gather}\label{Rhatexpr0}
\begin{split}
&\widehat{R}_N^{(P_1)}(\nu, \xi, \alpha, \beta) = \frac{\Gamma(2\nu + \alpha + \beta + 2)}{\Gamma(2\nu+\alpha+\beta-\alpha'-\beta' + 1) \Gamma\left(\alpha' + \frac{1}{2}\right) \Gamma\left(\beta' + \frac{1}{2}\right)} \\ & \times  \int_0^1 \int_0^1  t^{\alpha' + \beta'} (1 - t)^{2\nu + \alpha + \beta - \alpha' - \beta'} s^{\alpha' - 1/2} (1 - s)^{\beta' - 1/2} \widehat{R}_N^+ \left(t, s, 1 - s, \xi, \tfrac{1}{2} - \alpha', \tfrac{1}{2} - \beta' \right) \id s \id t,
\end{split}
\end{gather}
\begin{gather}\label{Rhatexpr}
\begin{split}
&\widehat{R}_M^{(P_2)}(\nu, \xi, \alpha, \beta) = \frac{\Gamma(2\nu + \alpha + \beta + 2)}{\Gamma(2\nu+\alpha+\beta-\alpha'-\beta' + 1) \Gamma\left(\alpha' + \frac{1}{2}\right) \Gamma\left(\beta' + \frac{1}{2}\right)} \\ & \times \int_0^1 \int_0^1 t^{\alpha' + \beta'} (1 - t)^{2\nu + \alpha + \beta - \alpha' - \beta'} s^{\alpha' - 1/2} (1 - s)^{\beta' - 1/2} \widehat{R}_M^- \left(t, s, 1 - s, \xi, \tfrac{1}{2} - \alpha', \tfrac{1}{2} - \beta' \right) \id s \id t.
\end{split}
\end{gather}
In deriving the form \eqref{Pfac}, we used the fact that $a_\ell(\alpha') = a_\ell(\alpha)$ and $a_\ell(\beta') = a_\ell(\beta)$ for all $\ell$. Straightforward estimation gives
\begin{gather}\label{Rhatineq1}
\begin{split}
\left|\widehat{R}_N^{(P_1)}(\nu, \xi, \alpha, \beta)\right| \le\;& \left|\frac{\Gamma(2\nu + \alpha + \beta + 2)}{\Gamma(2\nu+\alpha+\beta-\alpha'-\beta' + 1) \Gamma\left(\alpha' + \frac{1}{2}\right) \Gamma\left(\beta' + \frac{1}{2}\right)}\right| \\ & \times \int_0^1 \int_0^1  t^{\Re(\alpha' + \beta')} (1 - t)^{\Re(2\nu + \alpha + \beta - \alpha' - \beta')} s^{\Re(\alpha') - 1/2} (1 - s)^{\Re(\beta') - 1/2}\\ & \times  \left|\widehat{R}_N^+ \left(t, s, 1 - s, \xi, \tfrac{1}{2} - \alpha', \tfrac{1}{2} - \beta' \right)\right| \id s \id t,
\end{split}
\end{gather}
\begin{gather}\label{Rhatineq2}
\begin{split}
\left|\widehat{R}_M^{(P_2)}(\nu, \xi, \alpha, \beta)\right| \le\;&  \left|\frac{\Gamma(2\nu + \alpha + \beta + 2)}{\Gamma(2\nu+\alpha+\beta-\alpha'-\beta' + 1) \Gamma\left(\alpha' + \frac{1}{2}\right) \Gamma\left(\beta' + \frac{1}{2}\right)}\right| \\ & \times \int_0^1 \int_0^1 t^{\Re(\alpha' + \beta')} (1 - t)^{\Re(2\nu + \alpha + \beta - \alpha' - \beta')} s^{\Re(\alpha') - 1/2} (1 - s)^{\Re(\beta') - 1/2}\\ & \times  \left|\widehat{R}_M^- \left(t, s, 1 - s, \xi, \tfrac{1}{2} - \alpha', \tfrac{1}{2} - \beta' \right)\right| \id s \id t.
\end{split}
\end{gather}
Restricting $\xi$ to $\mathcal{D}\setminus \mathbb{R}^+$, we substitute the estimates from \eqref{Rhatbound1}, \eqref{Rhatbound2}, \eqref{Rhatbound3}, and \eqref{Rhatbound4}. Noting that $a_\ell(\alpha') = a_\ell(\alpha)$, $a_\ell(\Re(\alpha')) = a_\ell(\Re(\alpha))$, $a_\ell(\beta') = a_\ell(\beta)$ and $a_\ell(\Re(\beta')) = a_\ell(\Re(\beta))$ for all $\ell$, this yields the bounds \eqref{Pfacbound1}, \eqref{Pfacbound2}, \eqref{Pfacbound4}, and \eqref{Pfacbound5}, respectively.
Note that, for example, in deriving \eqref{Pfacbound1}, the conditions $-\frac{1}{2} < \Re(\alpha') < N + \frac{1}{2}$ and $|\Re(\beta')| < \frac{1}{2}$ apply. However, these are equivalent to $-\frac{1}{2} < \Re(\alpha) < N + \frac{1}{2}$ and $|\Re(\beta)| < \frac{1}{2}$. A similar remark applies to the other bounds.

The estimates \eqref{Pfacbound3} and \eqref{Pfacbound6} can be derived from \eqref{Pfacbound1} and \eqref{Pfacbound4} by applying the following inequality:
\begin{gather}\label{aineq}
\begin{split}
& \left| \frac{\Gamma \left( \Re(\alpha') + \ell + \frac{1}{2} \right)}{\Gamma \left( \alpha' + \ell + \frac{1}{2} \right)} a_\ell (\alpha) \right|
= \left| \frac{\Gamma \left( \Re(\alpha') + \ell + \frac{1}{2} \right)}{\Gamma \left( \alpha' + \ell + \frac{1}{2} \right)} a_\ell (\alpha') \right|
\\ & = \left| \frac{\cos(\pi \alpha)}{\cos(\pi \Re(\alpha))} \frac{\Gamma \left( -\alpha' + \ell + \frac{1}{2} \right)}{\Gamma \left( -\Re(\alpha') + \ell + \frac{1}{2} \right)} a_\ell (\Re(\alpha')) \right|
= \left| \frac{\cos(\pi \alpha)}{\cos(\pi \Re(\alpha))} \frac{\Gamma \left( -\alpha' + \ell + \frac{1}{2} \right)}{\Gamma \left( -\Re(\alpha') + \ell + \frac{1}{2} \right)} a_\ell (\Re(\alpha)) \right|
\\ & \le \frac{\Gamma \left( \Re(\alpha') + \frac{1}{2} \right)}{\left| \Gamma \left( \alpha' + \frac{1}{2} \right) \right|} \left| \frac{\cos(\pi \alpha)}{\cos(\pi \Re(\alpha))} a_\ell (\Re(\alpha)) \right|
\end{split}
\end{gather}
which holds when $\ell \ge 0$ and $|\Re(\alpha')| < \frac{1}{2}$ (or, equivalently, $|\Re(\alpha)| < \frac{1}{2}$). We then use the following observations:
\begin{enumerate}[(i)]
    \item If $-1 < \Re\left(\e^{-2\xi}\right) \le 0$, then $\left|1 + \e^{2\xi}\right| \le \left|1 - \e^{4\xi}\right|$.
    \item If $\Re\left(\e^{-2\xi}\right) > 0$, then $\left|1 + \e^{2\xi}\right| \le \left|1 - \e^{4\xi}\right| \left|\csc\left(2 \Im(\xi)\right)\right|$.
    \item If $1 \le \Re\left(\e^{2\xi}\right)$, then $1 \le \left|1 + \e^{-2\xi}\right|$.
    \item If $0 \le \Re\left(\e^{2\xi}\right) < 1$, then $\left|1 - \e^{-2\xi}\right| \le \left|1 - \e^{-4\xi}\right|$.
    \item If $-1 < \Re\left(\e^{2\xi}\right) < 0$, then $\left|1 - \e^{-2\xi}\right| \le \left|1 - \e^{-4\xi}\right| \left|\csc\left(2 \Im(\xi)\right)\right|$.
\end{enumerate}
This completes the proof of Theorem \ref{Pfacthm}.

To prove Theorem \ref{Qfacthm}, assume that $\Re(2\nu + \alpha + \beta + 1) > \Re(\alpha' + \beta')$. Substituting into \eqref{Ydef} and using \eqref{binomprod} with the values $u = t$, $v_1 = s$, $v_2 = 1 - s$, $A = \frac{1}{2} - \alpha'$, and $B = \frac{1}{2} - \beta'$, while taking into account \eqref{QHahn}, we obtain \eqref{Qfac}, with $\widehat{R}_N^{(Q)}(\nu, \xi, \alpha, \beta)=\widehat{R}_N^{(P_2)}(\nu, \xi, \alpha, \beta)$ (cf. \eqref{Rhatexpr}). Thus, the estimates for $\widehat{R}_N^{(Q)}(\nu, \xi, \alpha, \beta)$ directly follow from those for $\widehat{R}_N^{(P_2)}(\nu, \xi, \alpha, \beta)$, the latter of which are also valid for $\xi > 0$.

Now, assume that $\xi > 0$, and that $\nu$, $\alpha$, and $\beta$ are real, with $|\alpha|, |\beta| < \frac{1}{2}$ and $2\nu + \alpha + \beta + 1  > \alpha' + \beta'$. Since $\widehat{R}_N^{(Q)}(\nu, \xi, \alpha, \beta)=\widehat{R}_N^{(P_2)}(\nu, \xi, \alpha, \beta)$, substituting \eqref{Rhatbound5} into \eqref{Rhatexpr}, and observing that $a_\ell(\alpha') = a_\ell(\alpha)$ and $a_\ell(\beta') = a_\ell(\beta)$ for all $\ell$, we obtain
\begin{align*}
0 & <\widehat{R}_N^{(Q)}(\nu, \xi, \alpha, \beta) \\ & < 2 \times (-1)^N \sum_{\ell=0}^N a_\ell(\alpha) a_{N - \ell}(\beta) \left( - \frac{\e^{-\xi}}{\sinh \xi} \right)^\ell \left( \frac{\e^{-\xi}}{\cosh \xi} \right)^{N - \ell} \frac{\Gamma(2\nu + \alpha + \beta + 2)}{\Gamma(2\nu + \alpha + \beta + N + 2)}
\\ & = 2 \times (-1)^N g_N(-\xi, \alpha, \beta) \frac{\Gamma(2\nu + \alpha + \beta + 2)}{\Gamma(2\nu + \alpha + \beta + N + 2)}.
\end{align*}
This concludes the proof of Theorem \ref{Qfacthm}.

We proceed by showing that the factorial expansions \eqref{Pfacasymp} and \eqref{Qfacasymp} terminate, providing an exact representation of the corresponding functions when both $2\alpha$ and $2\beta$ are odd integers. Let $L$ denote the positive integer defined by $\max (2\left| \alpha \right|,2\left| \beta \right|) = 2L - 1$. Then, by definition \eqref{binomprod}, we have
\[
\widehat{R}_{2L}^ \pm  \left( t,s,1 - s,\xi ,\tfrac{1}{2}-\alpha',\tfrac{1}{2}-\beta' \right) = 0.
\]
Thus, from \eqref{Rhatexpr0} and \eqref{Rhatexpr}, it follows that $\widehat{R}_{2L}^{(P_1)}(\nu, \xi, \alpha, \beta) = 0$ and $\widehat{R}_{2L}^{(P_2)}(\nu, \xi, \alpha, \beta) = 0$. Consequently, by Theorems \ref{Pfacthm} and \ref{Qfacthm}, the expansions \eqref{Pfacasymp} and \eqref{Qfacasymp} terminate, yielding exact representations of the corresponding functions. These parameter conditions may be relaxed by invoking analytic continuation.

Finally, observe that when
\[
\left| \frac{\e^{\pm \xi}}{2 \sinh \xi} \right|,\quad \left| \frac{\e^{\pm \xi}}{2 \cosh \xi} \right| < 1 \quad \Longleftrightarrow \quad \left| \Re \left(\e^{\pm 2\xi} \right) \right| < \tfrac{1}{2},
\]
the convergence of the binomial expansion implies $\lim_{N \to +\infty} \widehat{R}_N^\pm(u, v_1, v_2, \xi, A, B) = 0$. Therefore, by \eqref{Rhatexpr0} and \eqref{Rhatexpr}, it follows that $\lim_{N \to +\infty} \widehat{R}_N^{(P_1)}(\nu, \xi, \alpha, \beta) = 0$ and $\lim_{M \to +\infty} \widehat{R}_M^{(P_2)}(\nu, \xi, \alpha, \beta) = 0$, provided that $\Re(2\nu + \alpha + \beta + 1) > \Re(\alpha' + \beta')$. As a result, under these conditions, the infinite series in \eqref{Pfacasymp} converges to its left-hand side. Since $\widehat{R}_N^{(Q)}(\nu, \xi, \alpha, \beta) = \widehat{R}_N^{(P_2)}(\nu, \xi, \alpha, \beta)$, the series in \eqref{Qfacasymp} converges to its left-hand side under the same assumptions.

\section{Proof of Theorems \ref{Pfacthm2} and \ref{Qfacthm2}}\label{fproof2}.

In this section, we establish the error bounds stated in Theorems \ref{Pfacthm2} and \ref{Qfacthm2}. To this end, we require alternative estimates for the remainder terms $\widehat{R}_N^\pm(u, v_1, v_2, \xi, A, B)$ defined in Lemma \ref{lemma2}.

\begin{lemma}\label{lemma3} Let $N$ be a non-negative integer, and let $A$ and $B$ be complex numbers. Let $u$, $v_1$, and $v_2$ be real numbers such that $0 < u, v_1, v_2 < 1$. For $\xi \in \overline{\mathcal{D}} \setminus \mathbb{R}^+$, define the remainder $\widehat{R}_N^+(u, v_1, v_2, \xi, A, B)$, and for $\xi \in \overline{\mathcal{D}}$, define the remainder $\widehat{R}_N^-(u, v_1, v_2, \xi, A, B)$ by \eqref{binomprod}. Then the following bounds hold:
\begin{gather}\label{Hahnbound}
\begin{split}
\left|\widehat{R}_N^\pm(u, v_1, v_2, \xi, A, B)\right| \le\; & \frac{\Gamma(\Re(A)) \Gamma(\Re(B))}{\left| \Gamma(A) \Gamma(B) \right|}
\frac{\left|\Gamma(A + B + N)\right|}{\Gamma(\Re(A + B) + N)} \\ &\times
\sum_{\ell = 0}^N \frac{(\Re(A))_\ell (\Re(B))_{N - \ell}}{\ell! (N - \ell)!} 
\left|\left( v_1 \frac{\e^{\pm \xi}}{2 \sinh \xi} \right)^\ell 
\left( v_2 \frac{\e^{\pm \xi}}{2 \cosh \xi} \right)^{N - \ell} \right|u^N 
\\ &\times \max_{0 \le q \le 1} \mathsf{g}\left( \pm quv_1 \frac{\e^{\pm \xi}}{2 \sinh \xi} + (1 - q) uv_2 \frac{\e^{\pm \xi}}{2 \cosh \xi} \right),
\end{split}
\end{gather}
under the conditions $\Re(A), \Re(B) > 0$ and $\Re(A + B) < 1$. Here, $\mathsf{g}$ is defined by \eqref{gdef}.
\end{lemma}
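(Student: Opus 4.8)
The plan is to collapse the two-factor binomial product into a single power with a convex-combined argument, and then to estimate the resulting one-factor remainder in terms of $\mathsf g$. Writing $z_1=\pm v_1\frac{\e^{\pm\xi}}{2\sinh\xi}$ and $z_2=v_2\frac{\e^{\pm\xi}}{2\cosh\xi}$, so that the left-hand side of \eqref{binomprod} is $(1-uz_1)^{-A}(1-uz_2)^{-B}$, I would first record the Beta-type combination identity
\[
(1-uz_1)^{-A}(1-uz_2)^{-B}=\frac{1}{B(A,B)}\int_0^1 q^{A-1}(1-q)^{B-1}\bigl(1-u(qz_1+(1-q)z_2)\bigr)^{-(A+B)}\,\d q,
\]
valid for $\Re(A),\Re(B)>0$. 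This follows from the Euler integral for the Beta function together with the Vandermonde identity $\sum_{\ell}\binom{n}{\ell}(A)_\ell(B)_{n-\ell}=(A+B)_n$: the coefficient of $u^n$ on the right is $\frac{(A+B)_n}{n!B(A,B)}\sum_\ell\binom{n}{\ell}z_1^\ell z_2^{n-\ell}B(A+\ell,B+n-\ell)$, and the per-$\ell$ simplification $(A+B)_nB(A+\ell,B+n-\ell)/B(A,B)=(A)_\ell(B)_{n-\ell}$ shows this equals the product's coefficient. Since the two sides agree term by term, subtracting the common partial sums of order $N$ gives $\widehat R_N^\pm=\frac{1}{B(A,B)}\int_0^1 q^{A-1}(1-q)^{B-1}M_N(u,Z_q,A+B)\,\d q$, where $Z_q=qz_1+(1-q)z_2$ and $M_N(u,w,C)$ is the single-binomial remainder from the proof of Lemma \ref{lemma2}.

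The heart of the matter is a \emph{sharp} estimate for this single-power remainder that keeps the exact complex Pochhammer symbol, namely
\[
\bigl|M_N(u,w,A+B)\bigr|\le\frac{\bigl|(A+B)_N\bigr|}{N!}\,|uw|^N\,\mathsf g(uw),\qquad 0<\Re(A+B)<1.
\]
I would derive it from $M_N(u,w,C)=\frac{(C)_N}{N!}(uw)^N\,{}_2F_1(1,C+N;N+1;uw)$ with $C=A+B$, which isolates $(A+B)_N$ and thereby produces the factor $|\Gamma(A+B+N)|=|(A+B)_N\Gamma(A+B)|$ in \eqref{Hahnbound}, followed by the bound ${}_2F_1(1,C+N;N+1;x)\le\mathsf g(x)$ in modulus for $x\in\mathbb{C}\setminus[1,+\infty)$. \textbf{This last inequality is the main obstacle.} The naive route through the Euler integral $\frac{\Gamma(N+1)}{\Gamma(C+N)\Gamma(1-C)}\int_0^1 t^{C+N-1}(1-t)^{-C}(1-xt)^{-1}\,\d t$, using $|1-xt|^{-1}\le\mathsf g(x)$ and the triangle inequality, only yields the weaker constant $\frac{\Gamma(\Re(C)+N)\Gamma(1-\Re(C))}{|\Gamma(C+N)\Gamma(1-C)|}\,\mathsf g(x)$; carried through the double integral this replaces $|\Gamma(A+B+N)|/\Gamma(\Re(A+B)+N)$ by the strictly larger $\Gamma(1-\Re(A+B))/|\Gamma(1-A-B)|$. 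Obtaining the stated, tighter constant requires exploiting the cancellation in the complex weight $t^{C+N-1}(1-t)^{-C}$ rather than passing to its modulus, and this is presumably where the device behind \eqref{gdef} enters; the real-parameter case is transparent, since there $|{}_2F_1|\le\mathsf g$ follows from positivity of the integrand and $(b)_k\le(c)_k$, which is why all the correcting quotients collapse to $1$.

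Granting the single-power estimate, the rest is bookkeeping. I would take moduli inside the Beta integral, replacing $q^{A-1}(1-q)^{B-1}/B(A,B)$ by $q^{\Re(A)-1}(1-q)^{\Re(B)-1}/|B(A,B)|$, factor out $|(A+B)_N|/|B(A,B)|=|\Gamma(A+B+N)|/|\Gamma(A)\Gamma(B)|$, and extract $\max_{0\le q\le1}\mathsf g(uZ_q)$. It then remains to bound $|Z_q|^N\le(q|z_1|+(1-q)|z_2|)^N$ by the triangle inequality, expand by the binomial theorem, and integrate each term via $\int_0^1 q^{\Re(A)+\ell-1}(1-q)^{\Re(B)+N-\ell-1}\,\d q=B(\Re(A)+\ell,\Re(B)+N-\ell)$; this reconstructs exactly the sum $\sum_{\ell=0}^N\frac{(\Re(A))_\ell(\Re(B))_{N-\ell}}{\ell!(N-\ell)!}|z_1|^\ell|z_2|^{N-\ell}$ together with the factor $\Gamma(\Re(A))\Gamma(\Re(B))/\Gamma(\Re(A+B)+N)$, and recombining the constants produces \eqref{Hahnbound}. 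The hypotheses $\Re(A),\Re(B)>0$ ensure convergence of the Beta integrals, while $\Re(A+B)<1$ is precisely the condition needed for the single-power $\mathsf g$-estimate; the limiting values when $A$ or $B$ is an integer are recovered by continuity, as elsewhere in the paper.
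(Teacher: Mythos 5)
Your overall strategy is the right one --- indeed it is the only argument the paper points to, since the paper's entire proof of Lemma \ref{lemma3} is the remark that it "proceeds similarly" to the real-parameter Lemma 3 of \cite{Hahn1980}, and Hahn's argument is precisely your Beta-integral combination of the two binomial factors followed by a single-power estimate against $\mathsf{g}$. Your coefficient-wise verification of the combination identity and your final bookkeeping are correct: writing $z_1=\pm v_1\e^{\pm\xi}/(2\sinh\xi)$, $z_2=v_2\e^{\pm\xi}/(2\cosh\xi)$ and $Z_q=qz_1+(1-q)z_2$, the bound \eqref{Hahnbound} would follow exactly as you say from the pointwise estimate
\begin{equation*}
\left|M_N(u,Z_q,A+B)\right|\le\frac{\left|(A+B)_N\right|}{N!}\left|uZ_q\right|^N\mathsf{g}(uZ_q),\qquad 0<\Re(A+B)<1.
\end{equation*}

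However, the step you flag as "the main obstacle" is not merely unproven in your proposal; it is false, so the proposal cannot be completed along this route. At $N=0$ the required estimate reads $|(1-x)^{-C}|\le\mathsf{g}(x)$ with $C=A+B$. Take $\xi=\im\pi/4$ and $q=0$, so that $x=uZ_0=uv_2\left(\tfrac12+\tfrac{\im}{2}\right)$: as $u,v_2\to1^-$ one has $\mathsf{g}(x)\to\sqrt2$, while $|(1-x)^{-C}|\to 2^{\Re(C)/2}\e^{-\pi\Im(C)/4}$, which is unbounded as $\Im(C)\to-\infty$; the lemma's hypotheses place no restriction on $\Im(A+B)$, and the failure persists for every fixed $N$, since $\left|{}_2F_1(1,C+N;N+1;x)\right|$ inherits the growth $\e^{\pi\left|\Im(C)\right|/4}$ up to powers of $|C|$. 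What your route does prove --- taking moduli of the weights $q^{A-1}(1-q)^{B-1}$ and $t^{A+B+N-1}(1-t)^{-(A+B)}$, which costs nothing because $q,t\in(0,1)$ --- is the bound with $\Gamma(1-\Re(A+B))/\left|\Gamma(1-A-B)\right|$ in place of $\left|\Gamma(A+B+N)\right|/\Gamma(\Re(A+B)+N)$, i.e.\ precisely your "weaker constant". Your closing hope that "the device behind \eqref{gdef}" supplies the missing cancellation is misplaced: $\mathsf{g}(w)$ is nothing more than $\left(\min_{0\le t\le1}|1-tw|\right)^{-1}$ and carries no information about the oscillation of the complex weights.

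You should also know that the obstruction lies in the statement, not in your argument. The same mechanism violates \eqref{Hahnbound} itself: with $N=0$, $\xi=\im/10$, $u=v_1=v_2=9/10$ and $A=B=\tfrac14+5\im$ (so $\Re(A),\Re(B)>0$ and $\Re(A+B)=\tfrac12<1$), a direct computation gives $|\widehat{R}_0^+|=|(1-uz_1)^{-A}(1-uz_2)^{-B}|\approx 7\times10^2$, whereas the right-hand side of \eqref{Hahnbound} is $\frac{\Gamma(1/4)^2}{|\Gamma(1/4+5\im)|^2}\cdot\frac{|\Gamma(1/2+10\im)|}{\Gamma(1/2)}\cdot\max_q\mathsf{g}(uZ_q)\approx 6.6\times1.7\approx11$. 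The reason is structural: for $\Im(A)=\Im(B)=T\to+\infty$ the left-hand side grows like $\e^{T(\arg(1-uz_1)+\arg(1-uz_2))}$, while on the right the exponential growth of $\Gamma(\Re(A))\Gamma(\Re(B))/\left|\Gamma(A)\Gamma(B)\right|$ is exactly cancelled by the exponential decay of $\left|\Gamma(A+B+N)\right|/\Gamma(\Re(A+B)+N)$, leaving only polynomial growth of order $T^{N+1/2}$. So the constant in \eqref{Hahnbound} (and, downstream, in Theorems \ref{Pfacthm2} and \ref{Qfacthm2}, whose proofs quote it) seems to require the weaker factor $\Gamma(1-\Re(A+B))/\left|\Gamma(1-A-B)\right|$ that your naive estimate produces; the two factors agree when $A$ and $B$ are real, which is why Hahn's case gives no hint of the discrepancy and why the paper's "proceeds similarly" glosses over it. In short: your reduction matches the intended proof, your diagnosis of where the complex case breaks is accurate, but the gap is genuine and cannot be closed as stated --- completing the argument honestly forces the weaker constant.
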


\begin{proof} The proof proceeds similarly to the argument given in \cite[Lemma 3]{Hahn1980} for the special case where $A$ and $B$ are real. We omit the details.
\end{proof}

We now proceed with the proof of Theorem \ref{Pfacthm2}. Assume that $\Re(\alpha'), \Re(\beta') < \frac{1}{2}$ and that $0<\Re(\alpha' + \beta') < \Re(2\nu+\alpha + \beta+1)$. By restricting $\xi$ to $\mathcal{D}\setminus \mathbb{R}^+$, we substitute the bound \eqref{Hahnbound} into the right-hand side of \eqref{Rhatineq1}. Noting that $a_\ell(\Re(\alpha')) = a_\ell(\Re(\alpha))$ and $a_\ell(\Re(\beta')) = a_\ell(\Re(\beta))$ for all $\ell$, we obtain the estimate
\begin{align*}
& \left|\widehat{R}_N^{(P_1)}(\nu, \xi, \alpha, \beta)\right| \le \frac{\Gamma(\Re(2\nu + \alpha + \beta - \alpha' - \beta') + 1)}{\left| \Gamma(2\nu + \alpha + \beta - \alpha' - \beta' + 1) \right|}
\frac{\left| \Gamma(-\alpha' - \beta' + N + 1) \right|}{\Gamma(\Re(-\alpha' - \beta') + N + 1)} \\ 
&  \times \sum_{\ell = 0}^N \left| \frac{\cos(\pi \alpha) \cos(\pi \beta)}{\cos(\pi \Re(\alpha)) \cos(\pi \Re(\beta))}  a_\ell (\Re(\alpha)) a_{N - \ell} (\Re(\beta)) \left( \frac{\e^\xi}{\sinh \xi} \right)^\ell \left( \frac{\e^\xi}{\cosh \xi} \right)^{N - \ell} \right| \\ 
&  \times \frac{\left|\Gamma(2\nu + \alpha + \beta + 2)\right|}{\Gamma(\Re(2\nu + \alpha + \beta) + N + 2)} \times \max_{0 \le q, t, s \le 1} \mathsf{g}\left( qst \frac{\e^\xi}{2\sinh \xi} + (1 - q)(1 - s)t \frac{\e^\xi}{2\cosh \xi} \right).
\end{align*}
Using \cite[Eq. (16.6)]{Hahn1980}, we find
\[
\max_{0 \le q, t, s \le 1} \mathsf{g}\left( qst \frac{\e^\xi}{2 \sinh \xi} + (1 - q)(1 - s)t \frac{\e^\xi}{2 \cosh \xi} \right) = \max_{0 \le q, s \le 1} \mathsf{g}\left( qs \frac{\e^\xi}{2 \sinh \xi} + (1 - q)(1 - s) \frac{\e^\xi}{2 \cosh \xi} \right).
\]
By \cite[Eqs. (16.7) and (16.17)]{Hahn1980}, this further simplifies to
\[
\max_{0 \le u \le \frac{\pi}{2}} \mathsf{g}\left( \frac{\sin^2 u}{1 + \sin(2u)} \frac{\e^\xi}{2 \sinh \xi} + \frac{\cos^2 u}{1 + \sin(2u)} \frac{\e^\xi}{2 \cosh \xi} \right),
\]
which proves the bound \eqref{Hahn1}. The case $\alpha = \beta = 0$ follows from a continuity argument. Note that the condition $\Re(\alpha'), \Re(\beta') < \frac{1}{2}$ is equivalent to $\left|\Re(\alpha)\right|,\left|\Re(\beta)\right| < \frac{1}{2}$. The bound \eqref{Hahn2} can be established in a completely analogous manner by combining \eqref{Hahnbound} and \eqref{Rhatineq2}. This completes the proof of Theorem \ref{Pfacthm2}.

For Theorem \ref{Qfacthm2}, note that $\widehat{R}_N^{(Q)}(\nu, \xi, \alpha, \beta) = \widehat{R}_N^{(P_2)}(\nu, \xi, \alpha, \beta)$. Consequently, the estimate \eqref{Hahn3} follows directly from \eqref{Hahn2}, which is also valid for $\xi > 0$.

\section{Proof of Theorem \ref{cutinfacthm2}}\label{fproof3}

In this section, we establish the error bounds presented in Theorem \ref{cutinfacthm2}. Suppose that $0 < \zeta  < \frac{\pi}{2}$. Assuming that $\Re(2\nu + \alpha + \beta + 1) > \Re(\alpha' +\beta')$, and noting that $\widehat{R}_N^{(Q)}(\nu,\xi,\alpha,\beta)=\widehat{R}_N^{(P2)}(\nu,\xi,\alpha,\beta)$, we substitute \eqref{Qfac} into the right-hand side of \eqref{PQrel2}. This results in \eqref{Pfac2} with
\begin{align*}
\widehat{R}_N^{(P)}(\nu, \zeta, \alpha, \beta) =\; & \tfrac{1}{2} \e^{-\left( (2\nu + \alpha + \beta + 1)\zeta - \left( \alpha + \frac{1}{2} \right) \frac{\pi}{2} \right) \im} \lim_{\varepsilon \to 0^+} \widehat{R}_N^{(Q)}(\nu, \varepsilon + \im\zeta, \alpha, \beta) \\ & + \tfrac{1}{2} \e^{\left( (2\nu + \alpha + \beta + 1)\zeta - \left( \alpha + \frac{1}{2} \right) \frac{\pi}{2} \right) \im} \lim_{\varepsilon \to 0^+} \widehat{R}_N^{(Q)}(\nu, \varepsilon - \im\zeta, \alpha, \beta).
\end{align*}
Applying the representation \eqref{Rhatexpr} and passing to the limit, we obtain
\begin{gather}\label{Pcutremainder}
\begin{split}
& \widehat{R}_N^{(P)}(\nu, \zeta, \alpha, \beta) = \frac{\Gamma(2\nu + \alpha + \beta + 2)}{2\Gamma(2\nu+\alpha+\beta-\alpha'-\beta' + 1) \Gamma\left(\alpha' + \frac{1}{2}\right) \Gamma\left(\beta' + \frac{1}{2}\right)}\e^{-\left( (2\nu + \alpha + \beta + 1)\zeta - \left( \alpha + \frac{1}{2} \right) \frac{\pi}{2} \right) \im} \\ & \times \int_0^1 \int_0^1 t^{\alpha' + \beta'} (1 - t)^{2\nu + \alpha + \beta - \alpha' - \beta'} s^{\alpha' - 1/2} (1 - s)^{\beta' - 1/2} \widehat{R}_N^- \left(t, s, 1 - s, \im\zeta, \tfrac{1}{2} - \alpha', \tfrac{1}{2} - \beta' \right) \id s \id t\\
\\ & +\frac{\Gamma(2\nu + \alpha + \beta + 2)}{2\Gamma(2\nu+\alpha+\beta-\alpha'-\beta' + 1) \Gamma\left(\alpha' + \frac{1}{2}\right) \Gamma\left(\beta' + \frac{1}{2}\right)}\e^{\left( (2\nu + \alpha + \beta + 1)\zeta - \left( \alpha + \frac{1}{2} \right) \frac{\pi}{2} \right) \im} \\ & \times \int_0^1 \int_0^1 t^{\alpha' + \beta'} (1 - t)^{2\nu + \alpha + \beta - \alpha' - \beta'} s^{\alpha' - 1/2} (1 - s)^{\beta' - 1/2} \widehat{R}_N^- \left(t, s, 1 - s, -\im\zeta, \tfrac{1}{2} - \alpha', \tfrac{1}{2} - \beta' \right) \id s \id t.
\end{split}
\end{gather}
A straightforward estimation yields the bound
\begin{gather}\label{RhatPbound}
\begin{split}
\left|\widehat{R}_N^{(P)}(\nu, \zeta, \alpha, \beta)\right| \le \;& \left|\frac{\Gamma(2\nu + \alpha + \beta + 2)}{2\Gamma(2\nu+\alpha+\beta-\alpha'-\beta' + 1) \Gamma\left(\alpha' + \frac{1}{2}\right) \Gamma\left(\beta' + \frac{1}{2}\right)}\right|\e^{\Im\left(\zeta_{\nu ,N,0}^{(3)}\right)} \\ & \times \int_0^1 \int_0^1 t^{\Re(\alpha' + \beta')} (1 - t)^{\Re(2\nu + \alpha + \beta - \alpha' - \beta')} s^{\Re(\alpha') - 1/2} (1 - s)^{\Re(\beta') - 1/2}\\ & \times  \left|\widehat{R}_N^- \left(t, s, 1 - s, \im\zeta, \tfrac{1}{2} - \alpha', \tfrac{1}{2} - \beta' \right)\right| \id s \id t
\\ & +\left|\frac{\Gamma(2\nu + \alpha + \beta + 2)}{2\Gamma(2\nu+\alpha+\beta-\alpha'-\beta' + 1) \Gamma\left(\alpha' + \frac{1}{2}\right) \Gamma\left(\beta' + \frac{1}{2}\right)}\right|\e^{-\Im\left(\zeta_{\nu ,N,0}^{(3)}\right)} \\ & \times \int_0^1 \int_0^1 t^{\Re(\alpha' + \beta')} (1 - t)^{\Re(2\nu + \alpha + \beta - \alpha' - \beta')} s^{\Re(\alpha') - 1/2} (1 - s)^{\Re(\beta') - 1/2}\\ & \times  \left|\widehat{R}_N^- \left(t, s, 1 - s, -\im\zeta, \tfrac{1}{2} - \alpha', \tfrac{1}{2} - \beta' \right)\right| \id s \id t.
\end{split}
\end{gather}
From Lemma \ref{lemma2}, we can deduce that
\begin{align*}
\left| \widehat{R}_N^- (u,v_1,v_2,\pm\im\zeta, A, B) \right| \le \; &2\left| \frac{\sin(\pi A) \sin(\pi B)}{\sin(\pi \Re(A)) \sin(\pi \Re(B))}  \frac{(\Re(A))_N}{N!} \left(  \frac{v_1}{2 \sin \zeta} \right)^N\right| u^N
\\ & +\sum_{\ell = 0}^{N - 1} \left| \frac{\sin(\pi B)}{\sin(\pi \Re(B))} \frac{(A)_\ell (\Re(B))_{N - \ell}}{\ell! (N - \ell)!} \left( \frac{v_1 }{2 \sin\zeta} \right)^\ell \left( \frac{v_2}{2 \cos\zeta} \right)^{N - \ell} \right| u^N
\\ &\times 
\begin{cases} 
     2\cos \zeta, & \text{if }\; 0 < \zeta  < \frac{\pi }{4}, \\ 
    \csc \zeta, & \text{if }\; \frac{\pi }{4} \le \zeta  < \frac{\pi }{2},
\end{cases}
\end{align*}
provided that $-N <\Re(A) < 1$ and $0<\Re(B)<1$, and
\begin{align*}
\left| \widehat{R}_N^- (u,v_1,v_2,\pm\im\zeta, A, B) \right| \le \; &2\left| \frac{\sin(\pi A) \sin(\pi B)}{\sin(\pi \Re(A)) \sin(\pi \Re(B))} \frac{(\Re(B))_N}{N!} \left(\frac{v_2}{2 \cos\zeta} \right)^N\right| u^N 
\\ & +\sum_{\ell = 1}^N \left| \frac{\sin(\pi A)}{\sin(\pi \Re(A))} \frac{(\Re(A))_\ell (B)_{N - \ell}}{\ell! (N - \ell)!} \left( \frac{v_1}{2 \sin\zeta} \right)^\ell \left( \frac{v_2}{2 \cos\zeta} \right)^{N - \ell} \right| u^N
\\ &\times 
\begin{cases} 
     \sec\zeta, & \text{if }\; 0 < \zeta  < \frac{\pi }{4}, \\ 
    2\sin \zeta, & \text{if }\; \frac{\pi }{4} \le \zeta  < \frac{\pi }{2},
\end{cases}
\end{align*}
provided that $0<\Re(A)<1$ and $-N <\Re(B) < 1$. If $A$ or $B$ is an integer, then the limiting values must to be taken in these bounds. Similarly, from Lemma \ref{lemma3} and \cite[Eq. (16.19)]{Hahn1980}, we obtain
\begin{align*}
\left|\widehat{R}_N^-(u, v_1, v_2, \pm \im \zeta, A, B)\right| \le\; & 2\frac{\Gamma(\Re(A)) \Gamma(\Re(B))}{\left| \Gamma(A) \Gamma(B) \right|}
\frac{\left|\Gamma(A + B + N)\right|}{\Gamma(\Re(A + B) + N)} \\ &\times
\sum_{\ell = 0}^N \frac{(\Re(A))_\ell (\Re(B))_{N - \ell}}{\ell! (N - \ell)!} 
\left|\left(\frac{v_1}{2 \sin\zeta} \right)^\ell 
\left(\frac{v_2}{2 \cos\zeta} \right)^{N - \ell} \right|u^N,
\end{align*}
under the conditions $\Re(A), \Re(B) > 0$ and $\Re(A + B) < 1$. Substituting these estimates into \eqref{RhatPbound} and observing that $\Im\left(\zeta_{\nu ,N,0}^{(3)}\right)=\Im\left(\zeta_{\nu ,N,\ell}^{(3)}\right)$, $a_\ell(\alpha') = a_\ell(\alpha)$, $a_\ell(\Re(\alpha')) = a_\ell(\Re(\alpha))$, $a_\ell(\beta') = a_\ell(\beta)$, and $a_\ell(\Re(\beta')) = a_\ell(\Re(\beta))$ for all $0\le\ell\le N$, we obtain the bounds \eqref{cutfacbound1}, \eqref{cutfacbound2}, and \eqref{cutfacbound4}, respectively. The case $\alpha = \beta = 0$ in \eqref{cutfacbound4} follows by a continuity argument. Note that, for instance, in deriving \eqref{cutfacbound1}, the conditions $-\frac{1}{2} < \Re(\alpha') < N + \frac{1}{2}$ and $|\Re(\beta')| < \frac{1}{2}$ are required. However, these are equivalent to $-\frac{1}{2} < \Re(\alpha) < N + \frac{1}{2}$ and $|\Re(\beta)| < \frac{1}{2}$. A similar observation holds for the other bounds. The estimate \eqref{cutfacbound3} can be derived from \eqref{cutfacbound1} by applying the inequality \eqref{aineq}.

The bounds for the remainder terms $\widehat{R}_N^{(\SzQ)} (\nu, \zeta, \alpha, \beta)$ and $\widehat{R}_N^{(\DQ)} (\nu, \zeta, \alpha, \beta)$ in the truncated factorial series \eqref{Qfac2} and \eqref{Qfac3} can be derived in a similar fashion by combining \eqref{Qfac} with \eqref{Qrel3} and \eqref{Qrel4}, respectively. The details are left for the reader to verify.

The integral representation of $\widehat{R}_N^{(P)} (\nu, \zeta, \alpha, \beta)$, along with the analogous representations for $\widehat{R}_N^{(\SzQ)} (\nu, \zeta, \alpha, \beta)$ and $\widehat{R}_N^{(\DQ)} (\nu, \zeta, \alpha, \beta)$, as well as the discussion at the end of Section \ref{fproof1}, demonstrate that the factorial expansions \eqref{Pfacasymp2}, \eqref{Qfacasymp2}, and \eqref{Qfacasymp3} terminate, providing an exact representation of the corresponding functions when both $2\alpha$ and $2\beta$ are odd integers.

Finally, note that when
\[
\frac{1}{|2\sin \zeta|}, \quad \frac{1}{|2\cos \zeta|} < 1 \quad \Longleftrightarrow \quad \tfrac{\pi}{6} < \zeta < \tfrac{\pi}{3},
\]
the convergence of the binomial expansion implies $\lim_{N \to +\infty} \widehat{R}_N^-(u, v_1, v_2, \pm\im\zeta, A, B) = 0$. Consequently, from \eqref{Pcutremainder}, we have $\lim_{N \to +\infty} \widehat{R}_N^{(P)}(\nu, \zeta, \alpha, \beta) = 0$, provided that $\Re(2\nu + \alpha + \beta + 1) > \Re(\alpha' + \beta')$. Thus, under these conditions, the infinite series in \eqref{Pfacasymp2} converges to its left-hand side. The convergence of the series \eqref{Qfacasymp2} and \eqref{Qfacasymp3} follows in a similar manner, based on the analogous integral representations of the remainder terms $\widehat{R}_N^{(\SzQ)}(\nu, \zeta, \alpha, \beta)$ and $\widehat{R}_N^{(\DQ)}(\nu, \zeta, \alpha, \beta)$.

\section{Numerical examples}\label{numerics}

In this section, we present numerical examples to illustrate the sharpness of our error bounds and the accuracy of the asymptotic approximations.

Figure \ref{fig1} shows the ratio of the left-hand side to the right-hand side of \eqref{boundnum} for $\nu = 200 + \im t$, $\alpha = 2 + \frac{\im}{2}$, $\beta = \frac{1}{4}$, and $\xi = \frac{3}{4}$, with $N = 2$ (black) and $N = 4$ (grey). The plot demonstrates that the error bound \eqref{boundnum} is particularly sharp when the real part of $\nu$ is large compared to its imaginary part.

\begin{figure}[!ht]
  \centering
  \includegraphics[width=0.5\textwidth]{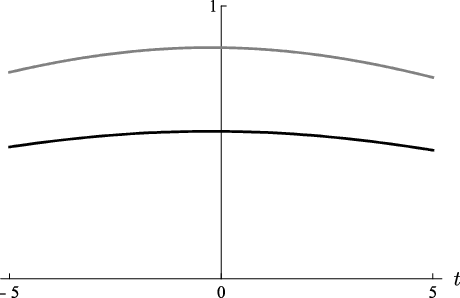}
  \caption{Ratio of the left-hand side to the right-hand side of \eqref{boundnum} for $\nu = 200 + \im t$, $\alpha = 2 + \frac{\im}{2}$, $\beta = \frac{1}{4}$, $\xi = \frac{3}{4}$, with $N = 2$ (black) and $N = 4$ (grey).}
  \label{fig1}
\end{figure}

Figure \ref{fig2} depicts the ratio of the left-hand side (with $R_N^{(P)}$) to the right-hand side of \eqref{cutbound3} for $\nu = 200 + \im t$, $\alpha = \frac{1}{5}$, $\beta = \frac{1}{4}$, and $\zeta = \frac{\pi}{3}$, with $N = 3$ (black) and $N = 5$ (grey). The plot shows that the error bound \eqref{boundnum} is especially sharp when the imaginary part of $\nu$ is not close to $0$. This behaviour arises because the $\cosh$ factor in the bound \eqref{cutbound3} becomes significant compared to the $\cos$ factors appearing in the inverse factorial series \eqref{invfexp1} when $\nu$ is real.

\begin{figure}[!ht]
  \centering
  \includegraphics[width=0.5\textwidth]{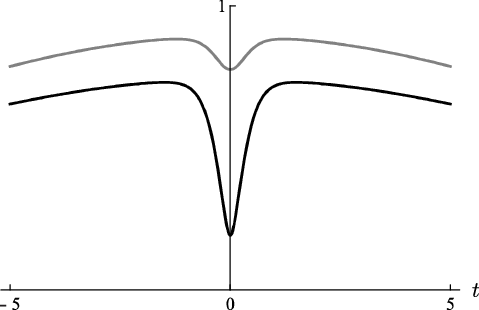}
  \caption{Ratio of the left-hand side (with $R_N^{(P)}$) to the right-hand side of \eqref{cutbound3} for $\nu = 200 + \im t$, $\alpha = \frac{1}{5}$, $\beta = \frac{1}{4}$, $\zeta = \frac{\pi}{3}$, with $N = 3$ (black) and $N = 5$ (grey).}
  \label{fig2}
\end{figure}

Figure \ref{fig3} shows the ratio of the left-hand side to the right-hand side of \eqref{numericsbound2} for $\alpha = \frac{1}{4} + \frac{\im}{4}$, $\beta = \frac{1}{3}$, $\xi = 0.01 + \im t$, and $N = 4$, with $\nu = 150$ (black) and $\nu = 150 + 50 \im$ (grey). The plot demonstrates that the bound remains accurate near the critical points $\xi = 0$ and $\xi = \pm \frac{\pi}{2}\im$. Moreover, the bound \eqref{numericsbound2} remains realistic even when the real part of $\nu$ is not significantly larger than its imaginary part.

\begin{figure}[!ht]
  \centering
  \includegraphics[width=0.5\textwidth]{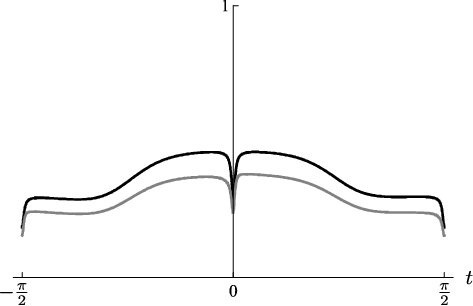}
  \caption{Ratio of the left-hand side to the right-hand side of \eqref{numericsbound2} for $\alpha = \frac{1}{4}+\frac{\im}{4}$, $\beta = \frac{1}{3}$, $\xi = 0.01+\im t$, $N=4$, with $\nu = 150$ (black) and $\nu = 150+50 \im$ (grey).}
  \label{fig3}
\end{figure}

Figure \ref{fig4} depicts the ratio of the left-hand side (with $\widehat{R}_N^{(P)}$) to the right-hand side of \eqref{cutfacbound3} for $\nu = 200 + \im t$, $\alpha = \frac{1}{5} + \frac{\im}{3}$, $\beta = \frac{1}{4}$, and $\zeta = \frac{\pi}{3}$, with $N = 3$ (black) and $N = 5$ (grey). The plot confirms that the error bound \eqref{cutfacbound3} remains especially sharp when the imaginary part of $\nu$ is away from zero, as previously explained in connection with the $\cosh$ and $\cos$ factors. Additionally, the bound stays realistic even when the real part of $\nu$ is not significantly larger than its imaginary part.

\begin{figure}[!ht]
 \centering
  \includegraphics[width=0.5\textwidth]{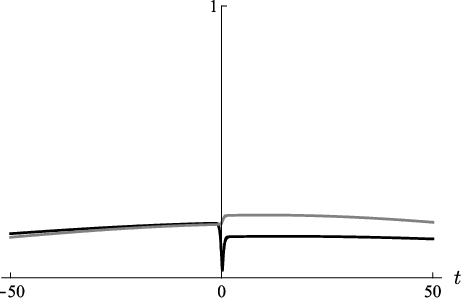}
\caption{Ratio of the left-hand side (with $\widehat{R}_N^{(P)}$) to the right-hand side of \eqref{cutfacbound3} for $\nu = 200 + \im t$, $\alpha = \frac{1}{5}+\frac{\im}{3}$, $\beta = \frac{1}{4}$, $\zeta = \frac{\pi}{3}$, with $N = 3$ (black) and $N = 5$ (grey).}
  \label{fig4}
\end{figure}

Finally, Figure \ref{fig5} shows the absolute value of the remainder $\widehat{R}_N^{(P)}(\nu, \zeta, \alpha, \beta)$ in \eqref{Pfac2} for $\alpha = \frac{1}{5}$, $\beta = \frac{1}{3}+\frac{\im}{6}$, $0<\zeta <\frac{1}{10}$, $N = 3$, with $\nu = 50$ (black) and $\nu = 100$ (grey). The figure suggests that the factorial expansion \eqref{Pfacasymp2} remains useful near the endpoint $\zeta = 0$, particularly for larger values of $\nu$.

\begin{figure}[!ht]
 \centering
  \includegraphics[width=0.5\textwidth]{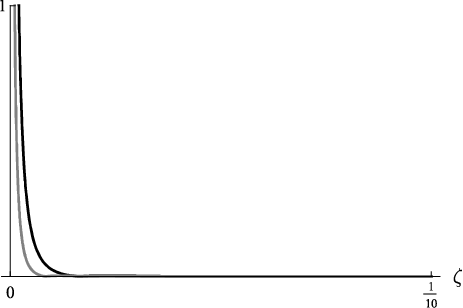}
\caption{The absolute value of the remainder $\widehat{R}_N^{(P)}(\nu, \zeta, \alpha, \beta)$ in \eqref{Pfac2} for $\alpha = \frac{1}{5}$, $\beta = \frac{1}{3}+\frac{\im}{6}$, $0<\zeta <\frac{1}{10}$, $N = 3$, with $\nu = 50$ (black) and $\nu = 100$ (grey).}
  \label{fig5}
\end{figure}

\section*{Acknowledgement} The author gratefully acknowledges the referees for their insightful comments and valuable suggestions, which have significantly improved the clarity and presentation of the paper.

\end{document}